\definecolor{refkey}{rgb}{1,0,0.5}
\definecolor{labelkey}{rgb}{0,0.4,1}
\newtheorem{theorem}{Theorem}[section]
\newtheorem{lemma}[theorem]{Lemma}
\def\x#1{(\ref{#1})}
\def\ifl{\iffalse }
\def\bc{\begin{center}}       \def\ec{\end{center}}
\def\ba{\begin{array}}        \def\ea{\end{array}}
\def\be{\begin{equation}}     \def\ee{\end{equation}}
\def\bea{\begin{eqnarray}}    \def\eea{\end{eqnarray}}
\def\beaa{\begin{eqnarray*}}  \def\eeaa{\end{eqnarray*}}
\numberwithin{equation}{section}
\newtheorem{definition}[theorem]{Definition}
\newtheorem{remark}[theorem]{Remark}
\numberwithin{equation}{section}
\begin{document}
	\author{Jos\'{e} Antonio Carrillo$^{1}$}
	\thanks{$^{1}$Mathematical Institute, University of Oxford, Oxford, OX2 6GG, UK. Email: carrillo@maths.ox.ac.uk}

	\author{Ke Lin$^{2}$}\thanks{$^{2}$School of Mathematics, Southwestern University of Finance and Economics, Chengdu, 611130, P.R. China. E-mail: linke@swufe.edu.cn}
	
	\title[Sharp critical mass for two-species chemotaxis system]
	{Sharp critical mass criteria for weak solutions to a degenerate cross-attraction system}

	\subjclass[2010]{35K65, 92C17, 35J20, 35A01, 35B44}
	

	\keywords{Degenerate parabolic system, chemotaxis, variational method, global existence, blow-up}
	
	\begin{abstract}
		The qualitative study of solutions to the coupled  parabolic-elliptic chemotaxis system   with nonlinear diffusion for two species will be considered in the whole Euclidean space $\mathbb{R}^d$ ($d\geq 3$). It was proven in \cite{CK2021-ANA} that there exist two critical curves that separate the global existence and blow-up of weak solutions to the above problem. We improve this result by providing sharp criteria for the dicothomy: global existence of weak solution versus blow-up below and at these curves. Besides, there exist sharp critical masses of initial data at the intersection of the two critical lines, which extend the well-known critical mass phenomenon in one-species Keller-Segel system in \cite{BCL09-CVPDE} to two-species case.
		
	\end{abstract}

	\maketitle
	
	\section{Introduction}
We analyse the Cauchy problem for a two-species chemotaxis system with nonlinear diffusion written as the parabolic-elliptic system given by
	\be \label{system}\begin{cases}
		\partial_tu_i(x,t)=  \Delta u^{m_i}_i(x,t)-\nabla \cdot (u_i(x,t)\nabla v_j(x,t)), &x\in \mathbb{R}^d, t>0, \\[0.2cm]
		-\Delta v_j(x,t)=u_j(x,t), &x\in \mathbb{R}^d, t>0, \\[0.2cm]
		u_i(x,0)=u^0_i(x), \,\,\,i,j=1,2,\,\,i\neq j,\,\,& x\in \mathbb{R}^d,
	\end{cases}
	\ee
where $m_i>1$, $d\geq 3$. This model arises in mathematical biology as a system for the communication between  breast tumor cells and macrophages via a short-ranged chemotactic signalling loop. In this setting, $u_1$ and $u_2$ respectively represent the density of macrophages and tumor cells, $v_1$ and $v_2$  respectively denote CSF-1 (colony stimulating factor 1) concentration and EGF (epidermal growth factor) concentration. Tumor cells secrete EGF, and respond to CSF-1, while macrophages secrete CSF-1, and respond to EGF. This signalling loop will cause tumor cells to aggregate potentially increasing the tumor growth, see more details in \cite{KPE14-JTB}. The choice of $m_i>1$ models volume size constraints for the migrations of cells similar to water filtration in porous medium. The solution $v_j$ to $(\ref{system})_2$ is given through the Newtonian potential of $u_j$:
	\begin{equation*} \label{newtonian potential for vi}
		\begin{split}
			v_j(x,t)=(\mathcal{K}\ast u_j)(x,t)=c_d\int_{\mathbb{R}^d}\frac{u_j(y,t)}{|x-y|^{d-2}}dy,\,\,j=1,2,
		\end{split}
	\end{equation*}
where $\mathcal{K}(x)=c_d/|x|^{d-2}$ and $c_d$ denotes the surface area of the sphere $\mathbb{S}^{d-1}$ in $\mathbb{R}^d$.
	
One of the most well-known biological systems to describe the aggregation of cellular slime molds, such as the Dictyostelium discoideum, was proposed by Keller and Segel \cite{KS1970-JTB} written in a simplified form as
	\be \label{KS system}\begin{cases}
		\partial_tu(x,t)=  \Delta u^{\kappa}(x,t)-\nabla \cdot (u(x,t)\nabla v(x,t)), &x\in \mathbb{R}^d, t>0, \\[0.2cm]
		-\Delta v(x,t)=u(x,t), &x\in \mathbb{R}^d, t>0,\\
		u(x,0)=u^0(x),\quad & x\in \mathbb{R}^d,
	\end{cases}
	\ee
where $u$ and $v$ denote the cell density and chemical concentration they secrete, respectively, $\kappa\geq 1$ is assumed to be the diffusion exponent, and $d\geq 2$. The initial mass $M$ of cells does not change during the evolution, i.e.,
	\begin{center}
		$M=\|u(t)\|_1=\|u^0\|_1$ for all $t>0$. 
	\end{center}
Note that the initial mass $M>0$ plays an important role in determining the behavior of solution to \eqref{KS system}. For the linear diffusion case in \eqref{KS system}, there exists a dichotomy between blow-up and global existence when $d=2$. If $M>8\pi$ the solutions blow up in finite time, while it was shown that there exists a global weak solution if $M<8\pi$ converging to a self-similar profile of \eqref{KS system}, see \cite{BDP2006-EJDE, CD2014-CPDE,CCLW2012-AAM, DP2004-CRMASP}. At the critical mass $M=8\pi$, the global-in-time existence of solutions together with corresponding asymptotic behavior as time $t\rightarrow\infty$ was given in \cite{BCC12-JFA,BCM2008-CPAM}.
	
In higher dimension $d\geq 3$, the similar  results for the situation $\kappa=m^*:=2-2/d$ have been shown  in \cite{BCL09-CVPDE,ST2009-DIE} to the case $\kappa=1$  and $d=2$. That is, there is a critical value on $M$ which distinguishes  global existence versus blow-up. One of our goal in this paper is to make sure that an analogous critical mass phenomenon occurs for two-population chemotaxis system \eqref{system} with $m_1=m_2=m^*$ in dimension $d\geq 3$. The number $m^*$ is chosen to produce a balance between the diffusion term $\Delta u^{\kappa}$ and the interaction term $\nabla \cdot (u\nabla v)$ in mass invariant scaling. Using this scaling argument, it is well-known that for $\kappa>m^*$, the diffusion term dominates and then it results in global existence of weak solutions for all masses \cite{S2006-DIE,Sugiyama2006-JDE}. Whereas if $\kappa<m^*$, the interaction term wins and finite-time blow-up solutions exist \cite{S2006-DIE}. The readers can find similar results in \cite{BRB11-Nonlinearity} for general singular kernels.
	
It should be pointed out that there exists another critical value on the $L^{m_*}$ norm of the initial data to classify the global existence of solution and the finite time blow-up of the solution if  $\kappa=m_*:=2d/(d+2)$ \cite{CLW2012-SJMA,Ogawa2011-DCDSS}. For the case of the intermediate exponent between $m_*$ and $m^*$ the criterion was examined in \cite{KNO2015-CVPDE,WCL2016-DCDS,CW2014-DM}. The characteristic of \eqref{KS system} for all ranges $\kappa>0$ has been extensively studied recently, including various aspects such as global existence, blow-up, hyper-contractive estimates and steady states, see \cite{BianLiu2013} for an excellent summary. The readers can find many references about similar arguments on the parabolic-parabolic case of \eqref{KS system}, see  \cite{IY2012-JDE2} for $1<\kappa<m^*$, \cite{WLC2019-ZAMP} for $m_*<\kappa<m^*$, \cite{BL2013-CPDE,LM2015-Poincare} for $\kappa=m^*$, and \cite{IY2012-JDE1,IY2020-DCDS} for $\kappa>m^*$.
	\begin{figure}[ht!]
		\label{figregimes}
		\centering
		\begin{tikzpicture}
			\draw (0,0) node[left] {$0$};
			\draw[->] (-0.2,0) --(8,0) node[right] {$m_1$};
			\draw[->] (0,-0.2) --(0,8) node[above] {$m_2$};
			\draw[dotted] (2,0)--(2,8);
			\draw[dotted] (0,2)--(8,2);
			\draw (2,0) node[below]{$1$};
			\draw (0,2) node[left]{$1$};	
			\draw[dotted] (3,0)--(3,3) node[below] at(3,0) {$m^*$};
			\draw[dotted] (0,3)--(3,3) node[left] at(0,3) {$m^*$};
			\draw[dotted] (4,0)--(4,2) node[below] at(4,0) {$d/2$};	
			\draw[dotted] (0,4)--(2,4) node[left] at(0,4) {$d/2$};			
			\draw[domain =2.3:3] plot (\x ,{1+2/(\x-2)});
			\draw[domain =3:4,red] plot (\x ,{1+2/(\x-2)});
			\draw[domain =2:4,blue] plot (\x ,{4/3+16/9/(\x-4/3) }) ;
			\node[right] at(2.5,6) {$m_1m_2+2m_1/d=m_1+m_2$};
			\draw[domain =3:8] plot (\x ,{2+2/(\x-1)}) node[right] at(6,2.6)  {$m_1m_2+2m_2/d=m_1+m_2$};
			\draw[domain =2:3,red] plot (\x ,{2+2/(\x-1)});
			\node[right] at(6,2.6)  {$m_1m_2+2m_2/d=m_1+m_2$};
			\draw (2.9,3.1) node[right]{${\bf{I}}:\,(m^*,m^*)$};
			\node[right] at(4,1.8) {${L_1}$};
			\node[above] at(1.8,4) {${L_2}$};
			\node[above] at(2.4,2.4) {${L_3}$};		
		\end{tikzpicture}
		\caption{Figure 1}
	\end{figure}
	
As for two-species model with cross-attraction chemical signals in \eqref{system}, the aggregation between two different species, interacting with each other in a chemotactic signalling loop, will affect the movements of individual species and the time asymptotic behavior of solutions, see  \cite{H2003,H2011-JNS} for  biological motivations and a survey of mathematical results on this aspects. The main tool used for the analysis of \eqref{system} depends on the existence of the free energy functional:
	\begin{equation} \label{definition of free energy F}
		\begin{split}
			\mathcal{F}[u_1,u_2](t)
			=&\underbrace{\frac{1}{m_1-1}\int_{\mathbb{R}^d}u^{m_1}_1(x,t)dx+\frac{1}{m_2-1}\int_{\mathbb{R}^d}u^{m_2}_2(x,t)dx}_{\text{diffusion terms}}\\
			&	-\underbrace{c_d\iint_{\mathbb{R}^d\times\mathbb{R}^d}\frac{u_1(x,t)u_2(y,t)}{|x-y|^{d-2}}dxdy}_{\text{cross-attraction term}}\\
			=&\sum^{2}_{i=1}\frac{1}{m_i-1}\|u_i\|^{m_i}_{m_i}-c_d\mathcal{H}[u_1,u_2],\quad t>0,
		\end{split}
	\end{equation}
which enjoys the following identity in the classical sense
	\begin{equation*} \label{ free energy identity}
		\begin{split}
			\mathcal{F}[u_1,u_2](t)+\int^t_0\mathcal{D}[u_1,u_2](s)ds=\mathcal{F}[u^0_1,u^0_2],\quad t>0,
		\end{split}
	\end{equation*}
with the dissipation $\mathcal{D}$ given by
	\begin{equation*} \label{definition of dissipation D}
		\begin{split}
			\mathcal{D}[u_1,u_2](t)=&\int_{\mathbb{R}^d}u_1(x,t)\left
			|\frac{m_1}{m_1-1}\nabla u^{m_1-1}_1(x,t)-\nabla v_2(x,t)\right|^2dx\\
			&+\int_{\mathbb{R}^d}u_2(x,t)\left
			|\frac{m_2}{m_2-1}\nabla u^{m_2-1}_2(x,t)-\nabla v_1(x,t)\right|^2dx,\quad t>0.
		\end{split}
	\end{equation*}
Note that for the linear diffusion case $m_1=m_2=1$, the diffusion terms in \eqref{definition of free energy F} would be replaced by $\sum^{2}_{i=1}\int_{\mathbb{R}^d}u_i\log u_idx$.
	
For linear cell diffusion of \eqref{system}, the critical mass phenomenon involved two masses of each species in two-dimensional setting has been detected in \cite{HT2019,HWYZ2019-Nonlinearity}. The main method used is the logarithmic Hardy-Littlewood-Sobolev inequality for system \cite{SW05-JEMS}.  We refer the readers to \cite{HT2019,KW2019-JDE,KW2020-EJAM,L2023,LZ2023,W2002-EJAM} for the literature on related multi-species model. When $m_i\in(1,d/2),i=1,2$, define
	\begin{equation*}\label{definition of p and q}
		p:=	\frac{d(m_1+m_2-m_1m_2)}{2m_2},\quad 	q:=\frac{d(m_1+m_2-m_1m_2)}{2m_1}.
	\end{equation*}
These numbers play a fundamental role in the analysis of \eqref{system} for nonlinear diffusion case. They represent scaling exponents, corresponding to the fact that, for each $\lambda>0$ and $m_1+m_2\neq m_1m_2$, the equations in \eqref{system} are invariant under the transformation $(u_1,u_2)\mapsto(u_{1\lambda},u_{2\lambda})$, where
$u_{1\lambda}(x,t)=\lambda^{d/p}u_1(\lambda x,\lambda^{d/q}t)$ and $u_{2\lambda}(x,t)=\lambda^{d/q}u_2(\lambda x,\lambda^{d/p}t)$. In \cite{CK2021-ANA}, we have obtained complete and optimal results for \eqref{system}: For $p<1$ or $q<1$ (we say that $\emph{\textbf{m}}:=(m_1,m_2)$ belongs to the sub-critical case), then weak solutions are global and uniformly bounded respect to time. For $p>1$ and $q>1$  (we say that $\emph{\textbf{m}}$ belongs to the super-critical case), blow-up solutions have been constructed for certain initial data. We also refer to \cite{L2023-CMS} for the global existence result with small initial data in the super-critical case. Obviously, $p=1$ or $q=1$ is the critical case for the solutions of \eqref{system}. Using an $(m_1,m_2)$ coordinate pair, one can show the critical cases are given by Line $L_1$ and Line $L_2$, reading as
\begin{equation*}\label{}
		\begin{split}
			\text{Line}\,\,\,L_1:\,\,m_1m_2+2m_1/d= m_1+m_2,\quad m_1\in\left[m^*,d/2\right),\quad m_2\in\left(1,m^*\right],
		\end{split}	
\end{equation*}
\begin{equation*}
		\begin{split}
			\text{Line}\,\,\,L_2:			m_1m_2+2m_2/d= m_1+m_2,\quad m_1\in\left(1,m^*\right],\quad m_2\in\left[m^*,d/2\right),
		\end{split}	
\end{equation*}
which intersects only at $(m^*,m^*)$. See the red curves in Figure \ref{figregimes}.
	
The purpose of this paper is to give a qualitative result on the asymptotic behavior of solutions in the area
	\begin{equation}\label{definition of m}
		\begin{split}
			m_1+m_2\geq &m_1m_2+2m_1/d,\,\,\,\,\,\,m_1+m_2\geq  m_1m_2+2m_2/d,\\
			m_1+m_2<&(1+2/d)m_1m_2,
		\end{split}
	\end{equation}
which is surrounded by Lines $L_i,i=1,2,3$, where the curve $L_3$ (blue curve in Figure \ref{figregimes}) is given by
	\begin{equation*}
		\begin{split}
			\text{Line}\,\,\,L_3:\,\,\, (1+2/d)m_1m_2= m_1+m_2\,\,\,\text{with}\,\,\, m_1\in\left(1,d/2\right),\,\,\,m_2\in\left(1,d/2\right).
		\end{split}	
	\end{equation*}
In particular, we would like to find a critical mass  phenomenon at the intersection point $(m^*,m^*)$. After rescaling, a special effort is  to find a balance between the diffusion and  the cross-attraction terms in \eqref{definition of free energy F}, this is related to the study of a maximization problem invoking the cross-attraction term in some suitable spaces (see Lemmas \ref{lemma maximizer for H 2} and \ref{lemma maximizer for H 4}). From these facts, the solutions will remain global over the time if the initial data ensures the positivity of free energy functional, while solutions satisfying certain constraints will lead to bounds on the total second moment implying blow up in finite time.

\
	
\noindent{\textbf{Main results.}} Let us introduce the concept of free energy solution for (\ref{system}).
	\begin{definition}\label{weak solution}
		Let $d\geq 3$ and $T>0$. Suppose that the initial data $\textbf{u}^0:=(u^0_1,u^0_2)$ satisfies
		\begin{equation} \label{total mass of solution}
			\begin{split}
				M_i:=\int_{\mathbb{R}^d}u_i(x,t)dx=\int_{\mathbb{R}^d}u^0_i(x)dx,\,\,i=1,2,\,\,\,t>0,
			\end{split}
		\end{equation}
		and
		\begin{equation} \label{assumption on initial data}
			\begin{split}
				&u^0_i\in L^1(\mathbb{R}^d;(1+|x|^2)dx)\cap L^{\infty}(\mathbb{R}^d),\\
				&u^{0}_i\in H^1(\mathbb{R}^d)\,\,\,\text{and}\,\,\,u^0_i\geq 0,\,\,i=1,2.
			\end{split}
		\end{equation}
		Then a vector $\textbf{u}:=(u_1,u_2)$ of non-negative functions defined in $\mathbb{R}^d\times(0,T)$ is called a free energy solution if
		\begin{enumerate}
			\item[$i)$]
			\begin{equation*}
				\begin{split}
					&u_i\in C([0,T);L^1(\mathbb{R}^d))\cap L^{\infty}(\mathbb{R}^d\times(0,T)),\\
					&u^{m_i}_i\in L^2(0,T;H^1(\mathbb{R}^d)),\quad i=1,2;
				\end{split}
			\end{equation*}
			\item[$ii)$] $\textbf{u}$ satisfies
			\begin{equation*}
				\begin{split}
					&\int^T_0\int_{\mathbb{R}^d}u_i(x,t)
					\partial_t{\phi}(x,t)dxdt+\int_{\mathbb{R}^d}u^0_i(x)\phi(x,0)dx\\
					&-\int^T_0\int_{\mathbb{R}^d}(\nabla u^{m_i}_i(x,t)-u_i(x,t)\nabla v_j(x,t))\cdot \nabla \phi(x,t) dxdt=0,
				\end{split}
			\end{equation*}
			for any test function $\phi\in \mathcal{D}(\mathbb{R}^d\times[0,T))$, $\forall\,\,i,j=1,2,\,\,i\neq j$;
			\item[$iii)$]	for any $T>0$, it satisfies 
			\begin{center}
				$u^{(2m_i-1)/2}_i\in L^2(0,T;H^1(\mathbb{R}^d))$, $i=1,2$, 
			\end{center}
			and free energy inequality
			\be{\label{free energy inequality}}
			\begin{split}	
				\mathcal{F}[u_1,u_2](t)+\int^t_ {0}\mathcal{D}[u_1,u_2](s)ds
				\leq \mathcal{F}[u^0_1,u^0_2]\quad\text{for}\quad t\in(0,T).
			\end{split}
			\ee	
		\end{enumerate}
	\end{definition}
		
The local-in-time existence of free energy solution of \eqref{system} has been guaranteed by the limit of strong solution to the corresponding approximated system. According to the analytic semigroup theory \cite[Theorem IV.1.5.1]{A1995} and the fixed point theorem, there exist  $T_{\max}\in(0,\infty]$ and a free energy solution $\emph{\textbf{u}}$ on $[0,T_{\max})$ with the following
	alternative: either $T_{\max}=\infty$ or if $T_{\max}<\infty$, then
	\begin{equation*}\label{criterion on local solution}
		\sum\limits^2_{i=1}||u_i(\cdot,t)||_{\infty}\rightarrow\infty \,\,\,\text{as}\,\,t\rightarrow T_{\max}.
	\end{equation*}
See \cite[Propositions 8-10]{Sugiyama2006-JDE} for the proof of one-species chemotaxis system, and see \cite[Section 2]{CK2021-ANA} for \eqref{system}.
Denote 
	\begin{center}$r:=(1+2/d)m_1m_2/(m_1+m_2)>1$ with $m_1,m_2\in(1,d/2)$. 
	\end{center}
It should be noted that the region $\emph{\textbf{m}}=(m_1,m_2)$ of exponents determined by the conditions in \eqref{definition of m} is equivalent to the conditions $p\geq 1$, $q\geq 1$ and  $r>1$. Define
	\begin{equation*}{\label{definition of set of minimizer 2}}
		\begin{split}
			\Gamma_i=\Big\{&h_i|h_i\in L^{1}(\mathbb{R}^d)\cap  L^{m_i}(\mathbb{R}^d), h_i\geq 0\Big\},\quad i=1,2.
		\end{split}
	\end{equation*}	
Chosen $\alpha,\beta>0$ satisfying
	\begin{equation*}{\label{assumption on alpha}}
		\begin{split}
			\alpha\in\left(0,\frac{m_1}{m_1-1}\left(1+\frac{2}{d}-\frac{1}{m_1}-\frac{1}{m_2}\right)\right),
		\end{split}	
	\end{equation*}
	\begin{equation*}{\label{}}
		\begin{split}
			\beta\in\left(0,\frac{m_2}{m_2-1}\left(1+\frac{2}{d}-\frac{1}{m_1}-\frac{1}{m_2}\right)\right),
		\end{split}	
	\end{equation*}
and
	\begin{equation*}{\label{relationship between alpha and beta 2}}
		\begin{split}
			\frac{m_1-1}{m_1}\alpha+\frac{m_2-1}{m_2}\beta=1+\frac{2}{d}-\frac{1}{m_1}-\frac{1}{m_2}.
		\end{split}	
	\end{equation*}
Then $(1-\alpha)/m_1+(1-\beta)/m_2>1$ if $m_1,m_2\neq m^*$, and $(1-\alpha)/m_1+(1-\beta)/m_2=1$ if both $m_1$ and $m_2$ are  equal to $m^*$. Furthermore, the variational problem
	\begin{equation*}{\label{definiton of C}}
		\begin{split}
			\Lambda^*_{m_1,m_2}=\sup_{0\neq h_i\in \Gamma_i,\,\,i=1,2}\frac{\mathcal{H}[h_1,h_2]}{\|h_1\|^{\alpha}_{1}\|h_2\|^{\beta}_{1}\|h_1\|^{1-\alpha}_{m_1}\|h_2\|^{1-\beta}_{m_2}}
		\end{split}
	\end{equation*}
is well-defined (see Section \ref{Variants of Hardy-Littlewood-Sobolev inequalities}).
	
To state our results precisely, we will introduce some notations as follows. Let $m_1,m_2\neq m^*$, and let $\theta\in(0,1)$. Define $\gamma_1,\gamma_2>0$ as
\begin{equation*}\label{definition of gamma 1 2}
			\gamma_1=\frac{\alpha}{	\frac{1-\alpha}{m_1}+\frac{1-\beta}{m_2}-1},\quad 	\gamma_2=\frac{\beta}{	 \frac{1-\alpha}{m_1}+\frac{1-\beta}{m_2}-1},
	\end{equation*}	
and set
\begin{equation*}\label{definition of kamma 1 2}
		\begin{split}
			\kappa_1=&c_d^{\frac{m_2}{m_1+m_2-m_1m_2}}\left[(m_1-1)\theta\right]^{\frac{m_2-1}{m_1+m_2-m_1m_2}}\left[(m_2-1)(1-\theta)\right]^{\frac{1}{m_1+m_2-m_1m_2}},\\
			\kappa_2=&c_d^{\frac{m_1}{m_1+m_2-m_1m_2}}\left[(m_1-1)\theta\right]^{\frac{1}{m_1+m_2-m_1m_2}}\left[(m_2-1)(1-\theta)\right]^{\frac{m_1-1}{m_1+m_2-m_1m_2}}.
		\end{split}
	\end{equation*}
Define an auxiliary function
$$
f(x):=x-\Lambda^{*}_{m_1,m_2,\theta}x^{\frac{1-\alpha}{m_1}+\frac{1-\beta}{m_2}}
$$
for $x>0$, where  
$$
\Lambda^{*}_{m_1,m_2,\theta}:=A^{\frac{1-\alpha}{m_1}+\frac{1-\beta}{m_2}}\left(\theta,\frac{1-\alpha}{m_1}/\left[\frac{1-\alpha}{m_1}+\frac{1-\beta}{m_2}\right]\right)\Lambda^*_{m_1,m_2}>0
$$ 
with $A(\theta,\eta)>0$ for $\theta,\eta\in(0,1)$ given in Lemma \ref{lemma for Young inequality}. It is evident that the function $f(x)$ reaches its maximum value at
	\begin{equation}\label{definition of x0}
			x_0=\left[\frac{1}{\left(\frac{1-\alpha}{m_1}+\frac{1-\beta}{m_2}\right)\Lambda^{*}_{m_1,m_2,\theta}}\right]^{\frac{1}{\frac{1-\alpha}{m_1}+\frac{1-\beta}{m_2}-1}}.
	\end{equation}

Note that both $\gamma_1,\gamma_2$ makes no sense and the maximum point $x_0$ of $f$ does not exist when $m_1=m_2=m^*$. Hence we first provide the main arguments about the behavior of free energy solutions to \eqref{system} when $m_1,m_2>1$ satisfy \eqref{definition of m}, excluding the point of intersection $(m^*,m^*)$. 	
	\begin{theorem}{\label{theorem on global existence}}
		Let $d\geq 3, T_{\max}\in(0,\infty]$. Let  $p\geq 1$, $q\geq 1$, $r>1$, and let $m_1,m_2\neq m^*$. Suppose that there exists a local free energy solution $\textbf{u}$ of \eqref{system} over $[0,T_{\max})$ with initial data $\textbf{u}^0$ satisfying \eqref{total mass of solution} and \eqref{assumption on initial data}.  Assume that there exists $\theta\in (0,1)$ such that	\begin{equation*}\label{assumption on F with initial data}
			\begin{split}
				\mathcal{F}[u^0_1,u^0_2]< \frac{c_d}{\kappa^{1+\gamma_1}_1\kappa^{1+\gamma_2}_2M^{\gamma_1}_1M^{\gamma_2}_2}f(x_0).
			\end{split}
		\end{equation*}	
  with $x_0$ depending on $\theta$ given by \eqref{definition of x0}. Define
		\begin{enumerate}
			\item[(i)] If
$			\mathcal{R}:=\kappa^{\gamma_1}_1\kappa^{\gamma_2}_2M^{\gamma_1}_1M^{\gamma_2}_2\left[\theta\kappa^{m_1}_1\|u^0_1\|^{m_1}_{m_1}+(1-\theta)\kappa^{m_2}_2\|u^0_2\|^{m_2}_{m_2}\right]<x_0$,
			then the corresponding free energy solution exists globally.
			\item[(ii)] If
			\begin{equation}\label{assumption on m1 and m2 1}
				\begin{split}
					\max\{m_1,m_2\}\leq2-\frac{2}{d}-\frac{d-2}{d}\left(1-\frac{1}{\frac{1-\alpha}{m_1}+\frac{1-\beta}{m_2}}\right)
				\end{split}
			\end{equation}
			and $\mathcal{R}>x_0$,
			then the free energy solution does not exist globally in time.
		\end{enumerate}
Moreover, the critical case $\mathcal{R}=x_0$ does not depend on $\theta$.
	\end{theorem}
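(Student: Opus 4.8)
The plan is to collapse the two-species functional onto the scalar profile $f$ by one sharp coercivity estimate, and then run a continuity (barrier) argument for (i) and a second-moment (virial) argument for (ii). Set $a:=\|u_1\|_{m_1}^{m_1}$, $b:=\|u_2\|_{m_2}^{m_2}$, $\sigma:=\frac{1-\alpha}{m_1}+\frac{1-\beta}{m_2}>1$, and let $\mathcal{R}(t)$ denote the quantity $\mathcal{R}$ with $u_i^0$ replaced by $u_i(\cdot,t)$, so $\mathcal{R}(0)=\mathcal{R}$. The key is the pointwise bound
\[ \mathcal{F}[u_1,u_2](t)\ \ge\ C\,f\big(\mathcal{R}(t)\big),\qquad C:=\frac{c_d}{\kappa_1^{1+\gamma_1}\kappa_2^{1+\gamma_2}M_1^{\gamma_1}M_2^{\gamma_2}}. \]
Starting from \eqref{definition of free energy F}, the sharp inequality of Lemmas \ref{lemma maximizer for H 2}--\ref{lemma maximizer for H 4} gives $c_d\mathcal{H}\le c_d\Lambda^*_{m_1,m_2}M_1^\alpha M_2^\beta\,a^{(1-\alpha)/m_1}b^{(1-\beta)/m_2}$, hence
\[ \mathcal{F}\ \ge\ \frac{a}{m_1-1}+\frac{b}{m_2-1}-c_d\Lambda^*_{m_1,m_2}M_1^\alpha M_2^\beta\,a^{(1-\alpha)/m_1}b^{(1-\beta)/m_2}. \]
The constants $\kappa_1,\kappa_2$ are tuned so that two identities hold, both of which I would check directly: the weights satisfy $\theta\kappa_1^{m_1}:(1-\theta)\kappa_2^{m_2}=(m_2-1):(m_1-1)$, which makes the diffusion part equal to $C\,\mathcal{R}(t)$ exactly; and the exponent matchings $m_1\cdot\frac{1-\alpha}{m_1}=1-\alpha$, $m_2\cdot\frac{1-\beta}{m_2}=1-\beta$, together with the weighted Young inequality of Lemma \ref{lemma for Young inequality} (applied with $\eta=\frac{1-\alpha}{m_1}/\sigma$), turn the attraction term into exactly $C\,\Lambda^*_{m_1,m_2,\theta}\,\mathcal{R}(t)^{\sigma}$. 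The two combine to give $\mathcal{F}\ge Cf(\mathcal{R})$. This bookkeeping with $\kappa_1,\kappa_2,\gamma_1,\gamma_2$, so that every exponent lands exactly, is the crux of the argument.

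For (i), the free-energy inequality \eqref{free energy inequality} and the hypothesis give $Cf(\mathcal{R}(t))\le\mathcal{F}(t)\le\mathcal{F}[u_1^0,u_2^0]<Cf(x_0)=C\max f$, so $f(\mathcal{R}(t))<f(x_0)$ for every $t$. Since $\mathcal{R}(0)<x_0$ and $t\mapsto\mathcal{R}(t)$ is continuous (from the regularity in Definition \ref{weak solution}), $\mathcal{R}(t)$ can neither reach nor cross the strict maximizer $x_0$, so $\mathcal{R}(t)<x_0$ on $[0,T_{\max})$. This bounds $\|u_i(t)\|_{m_i}$ uniformly in time, and by the bootstrap/hyper-contractivity estimates of \cite{Sugiyama2006-JDE,CK2021-ANA} this upgrades to a uniform $L^\infty$ bound, contradicting the blow-up alternative $\sum_i\|u_i(\cdot,t)\|_\infty\to\infty$; hence $T_{\max}=\infty$.

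For (ii), I would use $I(t):=\int_{\mathbb{R}^d}|x|^2(u_1+u_2)\,dx$. Testing \eqref{system} with $|x|^2$ and symmetrising the attraction double integral yields the virial identity
\[ I'(t)=2(d-2)\mathcal{F}[u_1,u_2](t)+\sum_{i=1}^2\frac{2d(m_i-m^*)}{m_i-1}\|u_i(\cdot,t)\|_{m_i}^{m_i}. \]
Condition \eqref{assumption on m1 and m2 1} forces $\max\{m_1,m_2\}<m^*$, so the sum is negative; bounding it above by $-2d(m^*-\max_i m_i)\,C\mathcal{R}(t)$ via the identity diffusion $=C\mathcal{R}(t)$, using $\mathcal{R}(t)>x_0$ (the same barrier argument, now from $\mathcal{R}(0)>x_0$), and using $f(x_0)=x_0(\sigma-1)/\sigma$ with $\mathcal{F}(t)<Cf(x_0)$, one obtains $I'(t)\le 2(d-2)\big(\mathcal{F}[u_1^0,u_2^0]-Cf(x_0)\big)<0$ precisely when $\max_i m_i\le m^*-\frac{d-2}{d}\big(1-\frac1\sigma\big)$, i.e. exactly \eqref{assumption on m1 and m2 1}. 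As $I\ge0$, this forces $I$ to vanish in finite time, so $T_{\max}<\infty$. The delicate point here is justifying the virial identity for free-energy solutions, which I would carry out on the regularised problem and pass to the limit, using $u_i^0\in L^1(\mathbb{R}^d;(1+|x|^2)dx)$ to control the moment.

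Finally, the identity diffusion $=C\mathcal{R}(t)$ rewrites the critical relation $\mathcal{R}=x_0$ as $\big(\frac{\|u_1^0\|_{m_1}^{m_1}}{m_1-1}+\frac{\|u_2^0\|_{m_2}^{m_2}}{m_2-1}\big)M_1^{\gamma_1}M_2^{\gamma_2}=c_d\,x_0/(\kappa_1^{1+\gamma_1}\kappa_2^{1+\gamma_2})$, whose left side is $\theta$-free. I would check the right side is $\theta$-independent by matching powers of $\theta$ and of $1-\theta$: the power of $\theta$ in $\kappa_1^{1+\gamma_1}\kappa_2^{1+\gamma_2}$ is $[(1+\gamma_1)(m_2-1)+1+\gamma_2]/s$ with $s=m_1+m_2-m_1m_2$, while in $x_0=(\sigma\Lambda^*_{m_1,m_2})^{-1/(\sigma-1)}A(\theta,\eta)^{-\sigma/(\sigma-1)}$ it is $\frac{(1-\alpha)/m_1}{\sigma-1}$, and these agree by the algebraic identity $s\,\frac{1-\alpha}{m_1}=m_2(\sigma-1)+\alpha(m_2-1)+\beta$ (the mirror identity handling $1-\theta$). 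Thus the $\theta$-factors cancel and the critical case $\mathcal{R}=x_0$ is independent of $\theta$.
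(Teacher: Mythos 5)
Your proposal is correct and follows essentially the same route as the paper: your coercivity bound $\mathcal{F}\geq C\,f(\mathcal{R}(t))$ is exactly the paper's rescaling $h_i=\kappa_i u_i$ combined with the variational constant $\Lambda^*_{m_1,m_2}$ of Lemma \ref{lemma maximizer for H 2} and the weighted Young inequality of Lemma \ref{lemma for Young inequality} (as in Lemma \ref{lemma for uniform for u in Lm 1}), your barrier-plus-bootstrap argument for (i) reproduces Lemmas \ref{lemma for uniform for u in Lm 1} and \ref{lemma infity bound for u and w}, and your virial argument for (ii), using $\mathcal{R}(t)>x_0$ and $f(x_0)=x_0(\sigma-1)/\sigma$, is the computation of Lemmas \ref{lemma for second moment of functional} and \ref{lemma for blow up in supercritical case2}. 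Your $\theta$-exponent matching for the critical case $\mathcal{R}=x_0$ (including the identity $s\,\tfrac{1-\alpha}{m_1}=m_2(\sigma-1)+\alpha(m_2-1)+\beta$, which checks out) is the same algebra the paper performs by direct simplification in the remark following the theorem.
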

 
 \begin{remark}
The above theorem provides a classification of the occurrence of finite time blow-up and global existence of free energy solutions in the region \eqref{definition of m} with $m_1,m_2\neq m^*$. This classification is based on the combination of invariant norms of two distinct initial data given by the critical identity $\mathcal{R}=x_0$. This critical identity can be simplified to
		\begin{equation*}\label{assumption on M1 and M2 4}
	\begin{split}
	c^{\frac{1}{\frac{1-\alpha}{m_1}+\frac{1-\beta}{m_2}-1}}_d&M^{\gamma_1}_1M^{\gamma_2}_2\Big[(m_1-1)^{\frac{1-\frac{1-\beta}{m_2}}{\frac{1-\alpha}{m_1}+\frac{1-\beta}{m_2}-1}}(m_2-1)^{\frac{\frac{1-\beta}{m_2}}{\frac{1-\alpha}{m_1}+\frac{1-\beta}{m_2}-1}}\|u^0_1\|^{m_1}_{m_1}\\
	&+(m_1-1)^{\frac{\frac{1-\alpha}{m_1}}{\frac{1-\alpha}{m_1}+\frac{1-\beta}{m_2}-1}}(m_2-1)^{\frac{1-\frac{1-\alpha}{m_1}}{\frac{1-\alpha}{m_1}+\frac{1-\beta}{m_2}-1}}\|u^0_2\|^{m_2}_{m_2}\Big]\\
	=&\left(\frac{1-\alpha}{m_1}+\frac{1-\beta}{m_2}\right)\left[\frac{1}{\left(\frac{1-\alpha}{m_1}\right)^{\frac{1-\alpha}{m_1}}\left(\frac{1-\beta}{m_2}\right)^{\frac{1-\beta}{m_2}}\Lambda^*_{m_1,m_2}}\right]^{\frac{1}{\frac{1-\alpha}{m_1}+\frac{1-\beta}{m_2}-1}},
	\end{split}
\end{equation*}
that is independent of the parameter $\theta\in(0,1)$. The restriction on the set of exponents $\emph{\textbf{m}}$ given by \eqref{assumption on m1 and m2 1} on Theorem \ref{theorem on global existence} (ii) to obtain blow-up for initial data satisfying $\mathcal{R}>x_0$ is not sharp, see more details in Section 5.
\end{remark}

The second theorem demonstrates a dichotomy phenomenon when $\emph{\textbf{m}}$ is at the intersection point. Let  $M_1$ and $M_2$ be positive numbers, and let $\theta_0=M^{m^*}_1/(M^{m^*}_1+M^{m^*}_2)\in(0,1)$.	Denote
\begin{equation}\label{definition of Pi}
			\Pi^*_{\theta_0}:=\sup_{0\neq h_i\in\Gamma_i,i=1,2}\frac{\mathcal{H}[h_1,h_2]}{\|h_1\|_1\|h_2\|_1[\theta_0\| h_1\|^{-m^*}_1\|h_1\|^{m^*}_{m^*}+(1-\theta_0)\|h_2\|^{-m^*}_1\|h_2\|^{m^*}_{m^*}]},
\end{equation}
and  define
$$
		\Sigma(\boldsymbol{M})=c_d(m^*-1)\Pi^*_{\theta_0}M_1M_2/(M^{m^*}_1+M^{m^*}_2).
$$
We have the following result.
\begin{theorem}{\label{theorem on critical on the intersection point}}
Let $m_1=m_2=m^*$, and let $M_1,M_2>0$. Suppose that there exists a local free energy solution $\textbf{u}$ of \eqref{system} over $[0,T_{\max})$ with initial data $\textbf{u}^0$ satisfying \eqref{total mass of solution} and \eqref{assumption on initial data}. Then
	\begin{enumerate}
		\item[(i)] If $\Sigma(\boldsymbol{M})<1$, then the corresponding free energy solution exists globally.
		\item[(ii)] If $\Sigma(\boldsymbol{M})>1$, 	then the free energy solution does not exist globally in time.
	\end{enumerate}
\end{theorem}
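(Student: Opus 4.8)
The plan is to turn the definition of the variational constant $\Pi^*_{\theta_0}$ in \eqref{definition of Pi} into a sharp a priori control of the free energy, with $\Sigma(\boldsymbol M)$ emerging as the exact sign threshold. From \eqref{definition of Pi} (whose well-posedness and attainment are provided by Lemmas \ref{lemma maximizer for H 2} and \ref{lemma maximizer for H 4}) one has, for every admissible pair,
\[
\mathcal{H}[u_1,u_2]\le \Pi^*_{\theta_0}\|u_1\|_1\|u_2\|_1\Big[\theta_0\|u_1\|_1^{-m^*}\|u_1\|_{m^*}^{m^*}+(1-\theta_0)\|u_2\|_1^{-m^*}\|u_2\|_{m^*}^{m^*}\Big].
\]
Evaluating this along the solution, where $\|u_i(t)\|_1=M_i$ is conserved by \eqref{total mass of solution}, the special choice $\theta_0=M_1^{m^*}/(M_1^{m^*}+M_2^{m^*})$ makes both coefficients in the bracket equal to $(M_1^{m^*}+M_2^{m^*})^{-1}$, collapsing the bound to $\mathcal{H}\le \frac{\Pi^*_{\theta_0}M_1M_2}{M_1^{m^*}+M_2^{m^*}}(\|u_1\|_{m^*}^{m^*}+\|u_2\|_{m^*}^{m^*})$. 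Substituting into \eqref{definition of free energy F} and using $1/(m^*-1)=d/(d-2)$ yields the central lower bound
\[
\mathcal{F}[u_1,u_2](t)\ge \frac{1-\Sigma(\boldsymbol M)}{m^*-1}\big(\|u_1(t)\|_{m^*}^{m^*}+\|u_2(t)\|_{m^*}^{m^*}\big).
\]

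For part (i), $\Sigma(\boldsymbol M)<1$ makes the coefficient positive, so combining with the free energy inequality \eqref{free energy inequality} gives the uniform bound $\|u_1(t)\|_{m^*}^{m^*}+\|u_2(t)\|_{m^*}^{m^*}\le \frac{m^*-1}{1-\Sigma(\boldsymbol M)}\mathcal{F}[u_1^0,u_2^0]$ for all $t$. I would then bootstrap this critical $L^{m^*}$ estimate to a uniform-in-time $L^\infty$ bound by a Moser/Alikakos iteration on $\frac{d}{dt}\|u_i\|_p^p$ for $p>m^*$, controlling the cross-attraction drift against the degenerate diffusion through a Gagliardo--Nirenberg interpolation exactly as in the one-species critical analysis \cite{BCL09-CVPDE}. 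Such an $L^\infty$ bound contradicts the blow-up alternative $\sum_i\|u_i(\cdot,t)\|_\infty\to\infty$, so $T_{\max}=\infty$.

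For part (ii) I would use the second-moment (virial) method. With $W(t):=\int_{\mathbb{R}^d}|x|^2(u_1+u_2)\,dx$, testing the equations against $|x|^2$ and symmetrizing the two cross terms by means of $x\cdot(x-y)+y\cdot(y-x)=|x-y|^2$ gives, again using $1/(m^*-1)=d/(d-2)$, the identity
\[
\frac{d}{dt}W(t)=2d\big(\|u_1\|_{m^*}^{m^*}+\|u_2\|_{m^*}^{m^*}\big)-2c_d(d-2)\mathcal{H}[u_1,u_2]=2(d-2)\,\mathcal{F}[u_1,u_2](t).
\]
When $\Sigma(\boldsymbol M)>1$ the coefficient in the lower bound above is negative, and since the critical dilation $u_{i,\lambda}(x)=\lambda^d u_i(\lambda x)$ preserves each mass while $\mathcal{F}$ is homogeneous of degree $d-2$, the sharpness of $\Pi^*_{\theta_0}$ allows me to select admissible initial data of masses $M_1,M_2$ (obtained by truncating and smoothing a near-maximizer of \eqref{definition of Pi}) with $\mathcal{F}[u_1^0,u_2^0]<0$. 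For such data, \eqref{free energy inequality} forces $\frac{d}{dt}W(t)=2(d-2)\mathcal{F}(t)\le 2(d-2)\mathcal{F}[u_1^0,u_2^0]<0$, so the nonnegative $W(t)$ must vanish in finite time, which is impossible for a global solution; hence $T_{\max}<\infty$.

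The main obstacle is rigor at the level of free energy solutions rather than the formal computations. In (i) the Moser iteration has to be run against the $m^*$-degenerate diffusion and the singular Newtonian drift with only the borderline $L^{m^*}$ bound available, so the interpolation constants must be tracked carefully to close the scheme. In (ii) I must justify that the second moment stays finite and differentiable and that the virial identity is valid for a merely weak solution; the standard remedy is to carry out the computation on the regularized approximating sequence underlying the local existence theory and pass to the limit, while using the decay of $u_i$ encoded in \eqref{assumption on initial data} to discard all boundary contributions.
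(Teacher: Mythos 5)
Your proposal is correct and follows essentially the same route as the paper: part (i) is exactly Lemma \ref{lemma on global existence in subcritical case2} (the choice of $\theta_0$ collapsing the bracket to $(M^{m^*}_1+M^{m^*}_2)^{-1}$ and the coercivity bound $\mathcal{F}\geq\frac{1-\Sigma(\boldsymbol{M})}{m^*-1}\left(\|u_1\|^{m^*}_{m^*}+\|u_2\|^{m^*}_{m^*}\right)$), after which the $L^\infty$ bootstrap you sketch is precisely the Section \ref{Sufficient condition on well-posedness of solutions} criterion of Lemmas \ref{lemma Lp bound for u and Lq for wq}--\ref{lemma infity bound for u and w} --- note these only require the subcritical $L^{k_i}$ norms with $k_i=m^*/[(1+2/d)m^*-1]<m^*$, obtained from your $L^{m^*}$ bound by interpolation with $L^1$, so the iteration you worried about is not run at the borderline norm. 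Part (ii) likewise matches Lemmas \ref{lemma for second moment of functional} and \ref{lemma for blow up in supercritical case3}: the virial computation gives $\mathcal{I}(t)=2(d-2)\mathcal{F}[u_1,u_2](t)$ since $m^*-2+2/d=0$, and the paper constructs $u^0_i(x)=\lambda_ih_i(\mu x)$ from a near-maximizer of \eqref{definition of Pi} exactly as you propose, so that $\mathcal{F}[u^0_1,u^0_2]<0$ and the nonnegative second moment must vanish in finite time.
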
	

Notice that we can find lower and upper bounds for $\Pi^*_{\theta_0}$. Define
\begin{equation}{\label{definition of Cstar}}
		C_*:=\sup_{h\neq 0}\left\{\frac{\mathcal{H}[h,h]}{\|h\|^{2/d}_1\|h\|^{m^*}_{m^*}},h\in L^1(\mathbb{R}^d)\cap L^{m^*}(\mathbb{R}^d)\right\}.
\end{equation}
As a simple observation, we  conclude that $C_*\leq \Pi^*_{\theta_0}$ for $\theta_0\in(0,1)$. In fact, \cite[Lemma 3.3]{BCL09-CVPDE} guarantees the existence of a non-negative function $h^*\in L^1(\mathbb{R}^d)\cap L^{m^*}(\mathbb{R}^d)$ with $\|h^*\|_1=\|h^*\|_{m^*}=1$ such that $\mathcal{H}[h^*,h^*]=C_*$, and hence $C_*\leq \Pi^*_{\theta_0}$ by setting $h_1=h_2=h^*$ in \eqref{definition of Pi}. In particular, our Theorem \ref{theorem on critical on the intersection point} $(ii)$ implies that if 
\begin{equation}\label{assumption on global existence condition 1}
    c_d(m^*-1)C_*M_1M_2/(M^{m^*}_1+M^{m^*}_2)>1,
\end{equation}
then there exists initial data leading to blow-up.

On the other hand, we shall show in Remark \ref{remark for pi and lambda} that 
\begin{equation}{\label{inequality for Pi theta0}}	\Pi^*_{\theta_0}\leq\frac{M_c(M^{m^*}_1+M^{m^*}_2)}{c_d(m^*-1)M^{1-\alpha}_1M^{1-\beta}_2},
\end{equation}
where $M_c$ is given by
\begin{equation}{\label{definition of Mc}}
	M_c=c_d\Lambda^*_{m^*,m^*}\frac{m^*-1}{m^*}\left(1-\alpha\right)^{\frac{1-\alpha}{m^*}}\left(1-\beta\right)^{\frac{1-\beta}{m^*}},
\end{equation}
which satisfies
\begin{equation}{\label{inequality for Mc}}
	M_c=c_d\Lambda^*_{m^*,m^*}(m^*-1)z\left(\frac{1-\alpha}{m^*}\right)\geq\frac{c_d\Lambda^*_{m^*,m^*}(m^*-1)}{2}\geq \frac{c_dC_*(m^*-1)}{2}
\end{equation}
with $z(\eta)=\eta^{\eta}\left(1-\eta\right)^{1-\eta}$, $\eta\in(0,1)$. The first inequality is a result of the fact that the function $z$ achieves its minimal value only at $\eta=1/2$.  The second one is a consequence of a simple fact $C_*\leq \Lambda^*_{m^*,m^*}$. By \eqref{inequality for Pi theta0} and \eqref{inequality for Mc}, we have $\Sigma(\boldsymbol{M})\leq M^{\alpha}_1M^{\beta}_2M_c$. In particular, our Theorem \ref{theorem on critical on the intersection point} $(i)$ implies that if
\begin{equation}\label{assumption on blow-up condition 2}
M^{\alpha}_1M^{\beta}_2<M^{-1}_c,
\end{equation}
the solution is global. 

The global existence condition \eqref{assumption on blow-up condition 2} and blow-up condition \eqref{assumption on global existence condition 1} with $\alpha=\beta=1/d$ were previously derived in \cite[Theorem 1.4]{CK2021-ANA}.
We established in \cite[Theorem 1.4]{CK2021-ANA} that solutions exist globally below the curve $M^{1/d}_1M^{1/d}_2=M^{-1}_c$, represented by the cyan curve in Figure \ref{figreg}, and blow-up occurs above the curve $M_1M_2/(M^{m_c}_1+M^{m_c}_2)=1/(2M_c)$, represented as the green curve in Figure \ref{figreg}. Our main Theorem \ref{theorem on critical on the intersection point} shows that the sharp curve, represented as the purple curve in Figure \ref{figreg} and resulting in a dichotomy at the intersection point ${\bf{I}}=(m^*,m^*)$, is given by the condition $\Sigma(\boldsymbol{M})=1$, fully solving the open problem in \cite{CK2021-ANA}. 
  
	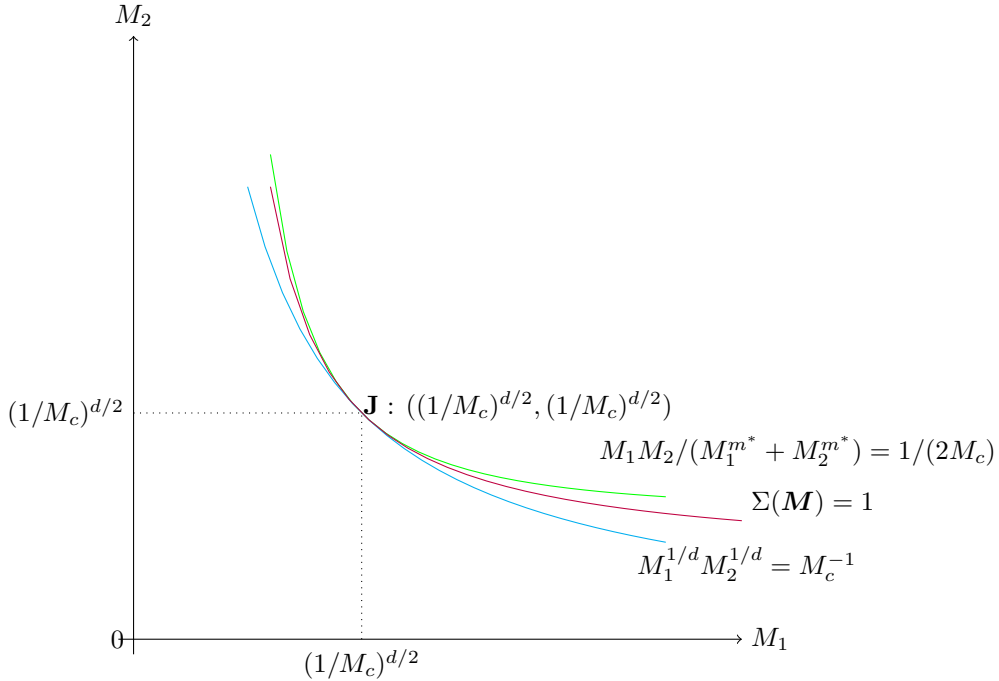
\begin{figure}[ht!]
		\centering
		\begin{tikzpicture}
			\draw (0,0) node[left] {$0$};
			\draw[->] (-0.2,0) --(8,0) node[right] {$M_1$};
			\draw[->] (0,-0.2) --(0,8) node[above] {$M_2$};
			\draw[dotted] (3,0)--(3,3) node[below] at(3,0) {$(1/M_c)^{d/2}$};
			\draw[dotted] (0,3)--(3,3) node[left] at(0,3) {$(1/M_c)^{d/2}$};	
			\draw[domain =1.8:7, green] plot (\x ,{5/3+32/3 *1/(\x^2-1)});
			\node[right] at(6,2.5) {$M_1M_2/(M^{m^*}_1+M^{m^*}_2)=1/(2M_c)$};
			\draw[domain =1.5:7,cyan] plot (\x ,{9/\x});
      			\node[right] at(8,1.8) {$\Sigma(\boldsymbol{M})=1$};
   			\draw[domain =1.8:8,purple] plot (\x ,{1+4/(\x-1)});
			\draw (2.9,3.1) node[right]{${\bf{J}}:\,((1/M_c)^{d/2},(1/M_c)^{d/2})$};
			\node[right] at(6.5,1) {$M^{1/d}_1M^{1/d}_2=M^{-1}_c$};
		\end{tikzpicture}
		\caption{Parameter line on intersection point $\bf{I}$.}
		\label{figreg}
	\end{figure}

Let us finally observe that if $h_1=h_2$ in \eqref{definition of Pi} then $\Pi^*_{\theta_0}=C_*$. Consequently,
Theorem \ref{theorem on critical on the intersection point} can be applied to the one-single chemotaxis system \eqref{KS system} recovering the well-known critical mass condition $\|u^0\|_1=M=\left(2/[c_dC_*(m^*-1)]\right)^{d/2}$ for the dichotomy, global existence versus blow-up, as in \cite{BCL09-CVPDE}.

This paper is organized as follows. In Section \ref{Variants of Hardy-Littlewood-Sobolev inequalities}, we derive the optimal constants for two modified Hardy-Littlewood-Sobolev inequalities in higher dimensions. Section \ref{Sufficient condition on well-posedness of solutions} provides a sufficient condition on the global existence of free energy solution to \eqref{system}. The final two Sections \ref{global existence} and \ref{blow-up} are devoted to the proof of well-posedness and blow-up of solutions to \eqref{system} in Theorems \ref{theorem on global existence} and \ref{theorem on critical on the intersection point}, respectively.

\section{Variants of Hardy-Littlewood-Sobolev inequalities}\label{Variants of Hardy-Littlewood-Sobolev inequalities}
Let us first recall the following well-known Hardy-Littlewood-Sobolev inequality (see \cite[Theorem 4.3]{Lieb2001}). Given $1<p_i<\infty$, $i=1,2$, and $1/p_1+1/p_2=1+2/d$. For all $h_i\in L^{p_i}(\mathbb{R}^d)$, there exists a constant $C_{\text{HLS}}=C_{\text{HLS}}(p_1,p_2,d)>0$ such that
	\begin{equation}\label{HLS inequality}
		\begin{split}
			|\mathcal{H}[h_1,h_2]|=\left|\iint_{\mathbb{R}^d\times\mathbb{R}^d}\frac{h_1(x)h_2(y)}{|x-y|^{d-2}}dxdy\right|\leq C_{\text{HLS}}\|h_1\|_{p_1}\|h_2\|_{p_2}.
		\end{split}
	\end{equation}
The sharp constant
	$$
	C_{\text{HLS}}=\sup_{h_i\neq 0,i=1,2}\left\{\frac{\mathcal{H}[h_1,h_2]}{\|h_1\|_{p_1}\|h_2\|_{p_2}}, h_i\in L^{p_i}(\mathbb{R}^d), i=1,2\right\}
	$$
was given by Lieb in \cite{Lieb1983-AM}, where it was shown that a maximizing pair $h_1$, $h_2$ exists for \eqref{HLS inequality}, i.e., a pair that gives equality in \eqref{HLS inequality}.
	
Next, we present two modifications to the Hardy-Littlewood-Sobolev inequality that are essential for establishing the global well-posedness of solutons. Let $\theta\in(0,1)$, $p\geq 1$, $q\geq 1$ and $r>1$. Consider the following problem
	\begin{equation}{\label{definiton of Lambda}}
		\begin{split}
			\Lambda^*_{m_1,m_2}=\sup_{0\neq h_i\in \Gamma_i,\,\,i=1,2}\frac{\mathcal{H}[h_1,h_2]}{\|h_1\|^{\alpha}_{1}\|h_2\|^{\beta}_{1}\|h_1\|^{1-\alpha}_{m_1}\|h_2\|^{1-\beta}_{m_2}},
		\end{split}
	\end{equation}
	where 
	\begin{equation}{\label{definition of alpha}}
		\begin{split}
			\alpha\in\left(0,\frac{m_1}{m_1-1}\left(1+\frac{2}{d}-\frac{1}{m_1}-\frac{1}{m_2}\right)\right),
		\end{split}	
	\end{equation}
	\begin{equation}{\label{definition of beta}}
		\begin{split}
			\beta\in\left(0,\frac{m_2}{m_2-1}\left(1+\frac{2}{d}-\frac{1}{m_1}-\frac{1}{m_2}\right)\right),
		\end{split}	
	\end{equation}
	are assumed to satisfy the following condition
	\begin{equation}{\label{definition of alpha and beta}}
		\begin{split}
			\frac{m_1-1}{m_1}\alpha+\frac{m_2-1}{m_2}\beta=1+\frac{2}{d}-\frac{1}{m_1}-\frac{1}{m_2}.
		\end{split}	
	\end{equation}
Note that  $(1-\alpha)/m_1+(1-\beta)/m_2\geq 1$. Assume, without loss of generality, that $m_1\geq m_2$. We can then notice the following inequality for $\alpha$ and $\beta$:
		\begin{equation}\label{inequality for alpha and beta}
			\begin{split}
				\alpha+\beta=&\left(1+\frac{2}{d}-\frac{1}{m_1}-\frac{1}{m_2}-\frac{m_2-1}{m_2}\beta\right)\frac{m_1}{m_1-1}+\beta\\
				=&\left(1+\frac{2}{d}-\frac{1}{m_1}-\frac{1}{m_2}\right)\frac{m_1}{m_1-1}+\frac{m_1-m_2}{(m_1-1)m_2}\beta\\
				\leq&\left(1+\frac{2}{d}-\frac{1}{m_1}-\frac{1}{m_2}\right)\left[\frac{m_1}{m_1-1}+\frac{m_1-m_2}{(m_1-1)(m_2-1)}\right]\\
				=&\frac{m_2}{m_2-1}\left(1+\frac{2}{d}-\frac{1}{m_1}-\frac{1}{m_2}\right)\\
				\leq&\frac{2}{d}.
			\end{split}
		\end{equation}
This implies that
		\begin{equation*}\label{inequality for alpha and beta 2}
			\begin{split}
				\frac{1-\alpha}{m_1}+\frac{1-\beta}{m_2}=1+\frac{2}{d}-\alpha-\beta\geq 1.
			\end{split}
		\end{equation*}
Furthermore, it is apparent from the inequality shown in \eqref{inequality for alpha and beta} that $\alpha+\beta=2/d$ if and only if $m_1=m_2=m^*$.
	
Finally, the definition of $\Lambda^*_{m_1,m_2}>0$ in \eqref{definiton of Lambda} is valid. By utilizing the  Hardy-Littlewood-Sobolev inequality with $r_1\in[m_2/[(1+2/d)m_2-1],m_1]$  and $r_2=r_1/[(1+2/d)r_1-1]\in(1,m_2]$, wee see that  for $h_i\in \Gamma_i$, $i=1,2$,
	\begin{equation*}\label{inequality for H}
		\begin{split}
			\mathcal{H}[h_1,h_2]\leq&C_{\text{HLS}}\|h_1\|_{r_1}\|h_2\|_{\frac{r_1}{(1+2/d)r_1-1}}\\
			\leq&C_{\text{HLS}}\|h_1\|^{\alpha}_{1}\|h_1\|^{1-\alpha}_{m_1}\|h_2\|^{\beta}_{1}\|h_2\|^{1-\beta}_{m_2}\\
			=&C_{\text{HLS}}\|h_1\|^{\alpha}_{1}\|h_2\|^{\beta}_{1}\|h_1\|^{1-\alpha}_{m_1}\|h_2\|^{1-\beta}_{m_2}.
		\end{split}
	\end{equation*}	
Hence $\Lambda^*_{m_1,m_2}\leq C_{\text{HLS}}$. Before demonstrating the existence of a maximizer for $\Lambda^*_{m_1,m_2}$, let us provide the following lemma \cite[Lemma 2.4]{Lieb1983-AM}.
		\begin{lemma}{\label{lemma in Lieb paper}}
			Let $\epsilon\in(0,1)$, and let $1/r_1=1/r_2+2/d$ with $1<r_1<r_2$. Suppose that non-negative $ \vartheta\in L^{r_1}(\mathbb{R}^d)$ is spherically symmetric and $\vartheta(\rho)\leq\epsilon \rho^{-d/r_1}$ for all $\rho>0$.  Then there exists a constant $C=C(d)>0$ such that
			\begin{equation*}	{\label{}}
				\begin{split}	
					\||x|^{-(d-2)}\ast\vartheta\|_{r_2}\leq C\epsilon^{1-r_1/r_2}\|\vartheta\|^{r_1/r_2}_{r_1}.
				\end{split}
			\end{equation*}
		\end{lemma}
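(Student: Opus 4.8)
The plan is to exploit the spherical symmetry of $\vartheta$ to reduce the Riesz potential to two one–dimensional averaging operators, and then to upgrade the elementary pointwise (weak–type) bound into the desired strong $L^{r_2}$ estimate by means of Hardy's inequality.

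First I would invoke Newton's theorem. Since $\vartheta$ is radial and $|x|^{-(d-2)}$ is, up to a constant, harmonic away from the origin, the spherical average of $|x-y|^{-(d-2)}$ over $\{|x|=s\}$ equals $\max(s,|y|)^{-(d-2)}$. As $g:=|x|^{-(d-2)}\ast\vartheta$ is itself radial, this gives the exact representation
\[ g(s)=s^{-(d-2)}\int_{|y|<s}\vartheta(y)\,dy+\int_{|y|>s}|y|^{-(d-2)}\vartheta(y)\,dy=\omega_{d-1}\big(I(s)+J(s)\big), \]
where $I(s)=s^{-(d-2)}\int_0^s\vartheta(\rho)\rho^{d-1}\,d\rho$ and $J(s)=\int_s^\infty\vartheta(\rho)\rho\,d\rho$. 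Passing to the volume variable $\sigma=s^d$ turns $I$ and $J$ into a Hardy and a dual Hardy averaging operator acting on $W(\tau):=\vartheta(\tau^{1/d})$ against the flat measure $d\sigma$, with $\|\vartheta\|_{r_1}^{r_1}\simeq\int_0^\infty W^{r_1}\,d\sigma$.

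The elementary input is the profile bound. Inserting $\vartheta(\rho)\le\epsilon\rho^{-d/r_1}$ (equivalently $W(\tau)\le\epsilon\tau^{-1/r_1}$, which is legitimate because $r_1<d/2$ is forced by $1/r_1=1/r_2+2/d$, so the near–field integral converges) gives the pointwise bounds $I(s),J(s)\le C\epsilon\,s^{-d/r_2}$. These express only that $g\in L^{r_2,\infty}$ with norm $\lesssim\epsilon$, and by themselves they are useless for the strong norm: the critical scaling makes $\int g^{r_2}$ scale invariant, so a single power–of–$s$ bound integrates to a divergent $\int ds/s$. The decisive step is therefore to split the exponent as $r_2=(r_2-r_1)+r_1$ and to write, for the first term,
\[ \int_0^\infty I(s)^{r_2}s^{d-1}\,ds\le (C\epsilon)^{r_2-r_1}\int_0^\infty \sigma^{-r_1}\Big(\int_0^\sigma W(\tau)\,d\tau\Big)^{r_1}\,d\sigma, \]
where the factor $I^{r_2-r_1}$ has been controlled by the weak bound (this is exactly what produces $\epsilon^{r_2-r_1}$) and the scaling relation collapses the surviving $\sigma$–exponent to precisely $-r_1$. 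The remaining integral is the classical one–dimensional Hardy inequality, valid since $r_1>1$, hence bounded by $C\int_0^\infty W^{r_1}\,d\sigma\simeq\|\vartheta\|_{r_1}^{r_1}$. Treating $J$ the same way, now with the dual Hardy inequality for $\int_\sigma^\infty$, yields the matching bound, and summing gives $\|g\|_{r_2}^{r_2}\le C\epsilon^{r_2-r_1}\|\vartheta\|_{r_1}^{r_1}$, i.e. the claim.

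I expect the main obstacle to be exactly this borderline nature of the estimate. Because the target inequality is scale invariant, every naive estimate — the Hedberg/maximal–function bound $g\lesssim\|\vartheta\|_{r_1}^{2r_1/d}(M\vartheta)^{1-2r_1/d}$, splitting the kernel into near and far parts, or an O'Neil rearrangement bound — produces only the weak–type conclusion $g\in L^{r_2,\infty}$ and does not see the genuine gain; the saturating example $\vartheta=\epsilon\rho^{-d/r_1}\mathbf 1_{\{a<\rho<b\}}$, for which both sides grow logarithmically in $b/a$, confirms that the strong estimate is delicate and that the exponent $r_1/r_2$ (rather than its complement $1-r_1/r_2$) must be tracked with care. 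The role of Hardy's inequality is to supply the one piece of information a pointwise bound cannot, namely the honest $L^{r_1}\to L^{r_2}$ smoothing of the potential; the only care needed is to check that the weighted $\sigma$–exponents reduce to the classical Hardy weights, which the critical relation $1/r_1=1/r_2+2/d$ guarantees.
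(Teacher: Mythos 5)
Your proof is correct, but note that the paper does not actually prove this lemma: it is quoted verbatim from \cite[Lemma 2.4]{Lieb1983-AM}, so the relevant comparison is with Lieb's original argument, which treats general kernels $|x|^{-\lambda}$ and proceeds via estimates on the spherically averaged kernel. Your argument is a legitimate self-contained alternative that exploits the special Newtonian exponent $\lambda=d-2$: Newton's theorem gives the \emph{exact} identity $g(s)=\omega_{d-1}\bigl(I(s)+J(s)\bigr)$ with $I(s)=s^{-(d-2)}\int_0^s\vartheta(\rho)\rho^{d-1}\,d\rho$ and $J(s)=\int_s^\infty\vartheta(\rho)\rho\,d\rho$, a representation unavailable for general $\lambda$, and the exponent bookkeeping checks out: the profile bound yields $I(s),J(s)\leq C\epsilon\,s^{-d/r_2}$ (using $r_1>1$ for the near-field term $I$ and $r_1<d/2$ --- forced by $1/r_1=1/r_2+2/d$ --- for the far-field term $J$; you attached this condition to the near-field integral, but it is $J$ that needs it, a harmless slip), and after splitting $r_2=(r_2-r_1)+r_1$ and substituting $\sigma=s^d$ the surviving $\sigma$-exponent is exactly $-r_1$ for the $I$-term and $-2r_1/d$ for the $J$-term. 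The $I$-term is then the classical Hardy inequality ($r_1>1$), while the $J$-term is not the plain dual Hardy average but the power-weighted one, $\int_0^\infty\sigma^{\alpha}\bigl(\int_\sigma^\infty f\,d\tau\bigr)^{r_1}d\sigma\leq C\int_0^\infty f^{r_1}\tau^{\alpha+r_1}\,d\tau$ with $f(\tau)=W(\tau)\tau^{2/d-1}$ and $\alpha=-2r_1/d>-1$, the weight collapsing to exactly $W^{r_1}$ thanks to the critical relation --- your closing remark anticipates this, but the explicit check is worth recording. One further caveat, inherited from the statement itself rather than from your proof: the Hardy constants depend on $r_1$ and degenerate as $r_1\to1^+$, so the constant is really $C(d,r_1)$ rather than $C(d)$; this is immaterial in the paper's applications, where $r_1$ is fixed in terms of $m_1,m_2,d$. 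Your diagnosis of why purely pointwise (weak-type) bounds cannot close the argument, supported by the saturating example $\vartheta=\epsilon\rho^{-d/r_1}\mathbf{1}_{(a,b)}$ for which both sides grow like $\log(b/a)$, is accurate and correctly identifies where the gain $\epsilon^{r_2-r_1}$ must come from.
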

Based on the arguments presented in \cite[Lemma 3.3]{BCL09-CVPDE}, which focuses on the maximization problem \eqref{definiton of Lambda} under the specific conditions of   $h_1=h_2$, $m_1=m_2=m^*$ and $\alpha+\beta=2/d$, we establish the existence of a maximizer for $\Lambda^*_{m_1,m_2}$ as follows.
	\begin{lemma}{\label{lemma maximizer for H 2}}
		Let $\alpha,\beta>0$ satisfy \eqref{definition of alpha}-\eqref{definition of alpha and beta}. Then there exists a pair $(\phi^*_1,\phi^*_2)\in \Gamma_1\times\Gamma_2$ of non-negative, radially symmetric and non-increasing function satisfying
$			\|\phi^*_{1}\|^{\alpha}_1\|\phi^*_{2}\|^{\beta}_1=1
$
and
$	\|\phi^*_{1}\|_{m_1}=\|\phi^*_{2}\|_{m_2}=1$. This pair of functions achieves the value of $\Lambda^*_{m_1,m_2}$.
	\end{lemma}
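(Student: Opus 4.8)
The plan is to establish existence of a maximizer by the direct method of the calculus of variations, adapting the rearrangement-and-compactness scheme of \cite[Lemma 3.3]{BCL09-CVPDE} to the coupled two-species quotient in \eqref{definiton of Lambda}. Since it has already been shown that $0<\Lambda^*_{m_1,m_2}\le C_{\text{HLS}}<\infty$, one may fix a maximizing sequence $(h_1^n,h_2^n)\in\Gamma_1\times\Gamma_2$ with $\mathcal H[h_1^n,h_2^n]\to\Lambda^*_{m_1,m_2}$. The first reduction is symmetrization: because the kernel $|x|^{-(d-2)}$ is a symmetric strictly decreasing function, the Riesz rearrangement inequality gives $\mathcal H[h_1^n,h_2^n]\le\mathcal H[(h_1^n)^*,(h_2^n)^*]$ while every $L^1$ and $L^{m_i}$ norm is preserved under symmetric decreasing rearrangement. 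Hence we may assume from the outset that each $h_i^n$ is non-negative, radially symmetric and non-increasing, which already delivers the qualitative properties claimed for the limit.

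The second reduction exploits the scaling invariances of the quotient. The functional in \eqref{definiton of Lambda} is invariant under the three-parameter family $(h_1,h_2)\mapsto(c_1h_1(\lambda\,\cdot),c_2h_2(\lambda\,\cdot))$ with $c_1,c_2,\lambda>0$; the balance condition \eqref{definition of alpha and beta} is precisely what forces the numerator and denominator to transform with the same power of $\lambda$. Solving the associated system of scaling equations to impose $\|h_1^n\|_{m_1}=\|h_2^n\|_{m_2}=1$ and $\|h_1^n\|_1^{\alpha}\|h_2^n\|_1^{\beta}=1$ is possible because the residual power of $\lambda$ in the last constraint is $-d\left(1+\frac2d-\frac1{m_1}-\frac1{m_2}\right)=-d\left(\frac{m_1-1}{m_1}\alpha+\frac{m_2-1}{m_2}\beta\right)\neq0$; this nondegeneracy makes the normalization uniquely achievable for every $n$.

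With these normalizations in force, radial monotonicity gives the uniform pointwise bounds $h_i^n(\rho)\le C\rho^{-d/m_i}$, which are uniform on every set $\{\rho\ge\delta\}$, so the Helly selection principle for monotone functions, applied along a diagonal exhaustion of $(0,\infty)$ by intervals $[\delta,R]$, extracts a subsequence with $h_i^n\to\phi_i^*$ pointwise a.e., each $\phi_i^*$ non-negative, radial and non-increasing. The step I expect to be the main obstacle is to show that no mass, and hence no value of $\mathcal H$, is lost in this limit. The danger is twofold: the two species may split scales (one $L^1$ norm diverging while the other vanishes under the product constraint), and mass may concentrate at the origin or escape to infinity. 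This is exactly where Lemma \ref{lemma in Lieb paper} enters: writing $\mathcal H[h_1,h_2]=\int_{\mathbb R^d}h_1\,(|x|^{-(d-2)}\ast h_2)\,dx$ and splitting each $h_i^n$ into a core part on an annulus and a tail part on which $h_i^n(\rho)\le\varepsilon\rho^{-d/r_1}$, the lemma bounds the convolution of the tail in the relevant dual norm by $C\varepsilon^{1-r_1/r_2}$, so the small-value contributions to $\mathcal H$ are $O(\varepsilon)$ uniformly in $n$ and the scale-splitting and vanishing scenarios are excluded.

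Once the uniform tail smallness is established, the core parts converge by the pointwise convergence together with the dominating bound and dominated convergence, yielding $\mathcal H[h_1^n,h_2^n]\to\mathcal H[\phi_1^*,\phi_2^*]$; Fatou's lemma keeps $\|\phi_i^*\|_{m_i}\le1$ and prevents the limiting $L^1$ norms from collapsing, so the normalizations pass to the limit. Consequently $\mathcal H[\phi_1^*,\phi_2^*]\ge\Lambda^*_{m_1,m_2}$, while the defining inequality \eqref{HLS inequality} gives the reverse bound once the limiting normalizations are verified, so $(\phi_1^*,\phi_2^*)$ attains the supremum. A final application of the same three-parameter rescaling, solvable by the nondegeneracy noted above, puts the maximizer into the exact normalized form $\|\phi_1^*\|_{m_1}=\|\phi_2^*\|_{m_2}=1$ and $\|\phi_1^*\|_1^{\alpha}\|\phi_2^*\|_1^{\beta}=1$ asserted in the statement.
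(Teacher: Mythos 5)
Your proposal is correct and follows essentially the same route as the paper's proof: Riesz rearrangement to reduce to radially symmetric non-increasing functions, the three-parameter rescaling (made solvable exactly by the balance condition \eqref{definition of alpha and beta}) to normalize the sequence, Helly selection with the monotonicity bound $h_i^n(\rho)\leq C\rho^{-d/m_i}$, Lemma \ref{lemma in Lieb paper} to rule out vanishing of the $L^1$ norms, and dominated convergence with an HLS-integrable majorant to pass $\mathcal{H}$ to the limit. The only cosmetic differences are that the paper excludes vanishing by a direct contradiction (if $\sup_\rho\rho^{d/r_1}\phi^*_{ij}(\rho)\to 0$ then $\mathcal{H}\to 0$) rather than your core--tail splitting, and it recovers the exact normalizations from the equality case of the supremum rather than by a final rescaling; one caveat is that Fatou's lemma alone does not prevent $L^1$ collapse as you phrase it at the end---that is precisely the job of the Lieb-lemma step you invoked earlier.
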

\begin{proof}
Suppose that there exists a maximum  sequence $(h_{ij})_j$ with non-negative elements in $\Gamma_i$, $i=1,2$, such that
		\begin{equation*}	{\label{minimizing sequence for F}}
				\lim_{j\rightarrow\infty}\mathcal{W}[h_{1j},h_{2j}]=\Lambda^*_{m_1,m_2}.
		\end{equation*}	
The function $\mathcal{W}$ is defined by
		\begin{equation*}	{\label{}}
				\mathcal{W}[h_{1},h_{2}]:=\frac{\mathcal{H}[h_1,h_2]}{\|h_1\|^{\alpha}_1\|h_2\|^{\beta}_1\|h_1\|^{1-\alpha}_{m_1}\|h_2\|^{1-\beta}_{m_2}},\quad h_i\in\Gamma_i,\quad i=1,2.
		\end{equation*}	
Denote $\phi_{ij}(x):=\lambda_{ij}h_{ij}(\mu_j x)$, $i=1,2$,  $j\in\mathbb{N}^{+}$, where
		\begin{equation*}{\label{}}
				\lambda_{ij}=\mu^{\frac{d}{m_i}}_j\|h_{ij}\|^{-1}_{m_i},\quad i=1,2,
		\end{equation*}
		\begin{equation*}{\label{}}
			\begin{split}
				\mu_j=&\|h_{1j}\|^{\frac{\alpha}{d\left(\frac{m_1-1}{m_1}\alpha+\frac{m_2-1}{m_2}\beta\right)}}_1\|h_{2j}\|^{\frac{\beta}{d\left(\frac{m_1-1}{m_1}\alpha+\frac{m_2-1}{m_2}\beta\right)}}_1\\
				&\cdot\|h_{1j}\|^{-\frac{\alpha}{d\left(\frac{m_1-1}{m_1}\alpha+\frac{m_2-1}{m_2}\beta\right)}}_{m_1}\|h_{2j}\|^{-\frac{\beta}{d\left(\frac{m_1-1}{m_1}\alpha+\frac{m_2-1}{m_2}\beta\right)}}_{m_2}.
			\end{split}
		\end{equation*}
Upon a direct computation, it is evident that
		\begin{equation*}{\label{}}
				\mathcal{W}[h_{1j},h_{2j}]=\mathcal{W}[\phi_{1j},\phi_{2j}]=\mathcal{H}[\phi_{1j},\phi_{2j}],
		\end{equation*}
		\begin{equation*}	{\label{}}	
				\|\phi_{1j}\|^{\alpha}_1\|\phi_{2j}\|^{\beta}_1=1,
		\end{equation*}
		\begin{equation*}{\label{}}	
				\|\phi_{1j}\|_{m_1}=\|\phi_{2j}\|_{m_2}=1.
		\end{equation*}
Define  $\phi^*_{ij}$ as the symmetric non-increasing rearrangement of $\phi_{ij}$. By referring to the Riesz rearrangement property in \cite[Lemma 2.1]{Lieb1983-AM}, one obtains that
		\begin{equation}	{\label{equality for psi 1j and psi 2j}}	
				\|\phi^*_{1j}\|^{\alpha}_1\|\phi^*_{2j}\|^{\beta}_1=1,
		\end{equation}
		\begin{equation}	{\label{equality for psi 1j and psi 2j 2}}	
				\|\phi^*_{1j}\|_{m_1}=\|\phi^*_{2j}\|_{m_2}=1,
		\end{equation}
		\begin{equation*}	
				\mathcal{H}[\phi^*_{1j},\phi^*_{2j}]=\mathcal{W}[\phi^*_{1j},\phi^*_{2j}]\geq\mathcal{W}[\phi_{1j},\phi_{2j}].
		\end{equation*}	
Hence $(\phi^*_{ij})_j$ is also a maximizing sequence. Using \eqref{equality for psi 1j and psi 2j 2} and the fact that $\phi^*_{ij}$ is non-negative and monotonically increasing, we conclude that for $R>0$,
		\begin{equation*}	
			\begin{split}
				1=\|\phi^*_{ij}\|^{m_i}_{m_i}=&c_d\int^{\infty}_0(\phi^*_{ij})^{m_i}(\rho)\rho^{d-1}d\rho\geq c_d\int^{R}_0(\phi^*_{ij})^{m_i}(\rho)\rho^{d-1}d\rho\\
				\geq&\frac{c_dR^d}{d}(\phi^*_{ij})^{m_i}(R),\,\,i=1,2.
			\end{split}
		\end{equation*}	
Then
		\begin{equation}\label{inequality for phi}	
				0\leq \phi^*_{ij}(R)\leq \left(\frac{d}{c_d}\right)^{\frac{1}{m_i}}R^{-d/m_i}\leq C_0R^{-d/m_i},\,\,i=1,2.
		\end{equation}	
Therefore based on the boundedness and decreasing properties of $\phi^*_{ij}$ in $(R,\infty)$, $i=1,2$, one makes use of Helly's theorem to find a radially symmetric and non-increasing subsequence fulfilling
		\begin{equation}{\label{weak convergence for f*}}
				\phi^*_{ij}\rightarrow \phi^*_i,\,\,\,\,\text{point-wisely}\,\,\,\,\text{as}\,\,\,\,j\rightarrow\infty,\quad i=1,2,
		\end{equation}	
with some non-negative symmetric non-increasing $\phi^*_i\in L^{m_i}(\mathbb{R}^d)$. Furthermore, the inequality
		\begin{equation}{\label{lower bound for w*}}
				\|\phi^*_{i}\|_{m_i}\leq1,\quad  i=1,2,
		\end{equation}	
is valid due to Fatou's lemma.	
		
In the subsequent step, we assert that
\begin{center}$\|\phi^*_i\|_1\neq 0$, $\forall\,\,i=1,2$. 
		\end{center}
This implies that $\|\phi^*_{ij}\|_1$ is uniformly bounded for all $j\in\mathbb{N}^+$ by \eqref{equality for psi 1j and psi 2j},  $\forall\,\,i=1,2$. Otherwise, if $\|\phi^*_{ij}\|_1\rightarrow 0$ for some $i\in\{1,2\}$ as $j\rightarrow\infty$, then $ \|\phi^*_{i}\|_1\equiv 0$ according to Fatou's lemma, which contradicts the previous statement.  
		
To demonstrate above claim, without loss of generality we first assume $\|\phi^*_{1j}\|_1\leq 1$ for all $j\in \mathbb{N}^+$ based on \eqref{equality for psi 1j and psi 2j}. Let $r_1=m_2/[(1+2/d)m_2-1]\in(1,m_1)$ and $r_2=m_2/(m_2-1)$. The monotony of  $\phi^*_{1j}$ implies that	
\begin{equation*}	
				(\phi^*_{1j})^{r_1}(R)\leq\frac{d}{c_dR^d} \|\phi^*_{1j}\|^{r_1}_{r_1}\leq\frac{d}{c_dR^d} \|\phi^*_{1j}\|^{\frac{m_1-r_1}{m_1-1}}_{1}\|\phi^*_{1j}\|^{\frac{(r_1-1)m_1}{m_1-1}}_{m_1}\leq \frac{d}{c_d}R^{-d}.
		\end{equation*}
Denote $a_{1j}=\sup_{\rho}\rho^{d/r_1}\phi^*_{1j}(\rho)$. If $a_{1j}\rightarrow 0$, then Lemma \ref{lemma in Lieb paper} implies that $\||x|^{-(d-2)}\ast\phi^*_{1j}\|_{r_2}\rightarrow 0$, whereas H\"{o}lder's inequality guarantees that
		\begin{equation*}{\label{}}
				\left|\mathcal{H}[\phi^*_{1j},\phi^*_{2j}]\right|\leq \||x|^{-(d-2)}\ast\phi^*_{1j}\|_{r_2}\|\phi^*_{2j}\|_{m_2}\rightarrow 0.
		\end{equation*}		
This is impossible since $(\phi^*_{ij})_j$, $i=1,2$, is a maximizing sequence. Then there exist constants $\delta>0$ and $\rho_0>0$ such that $a_{1j}\geq \delta>0$ and  $\phi^*_{1j}(\rho_0)>\delta\rho^{-d/r_1}_0/2>0$. Therefore, it follows that $\phi^*_{1}(\rho)>\delta\rho^{-d/r_1}_0/2$ for all $\rho\leq \rho_0$, and then $\|\phi^*_1\|_1>c_d\delta\rho^{d-d/r_1}_0/(2d)$.
		
We next show $\|\phi^*_2\|_1\neq 0$. By taking $r_1=m_2$ and $r_2=m_2/(1-2m_2/d)$ in Lemma \ref{lemma in Lieb paper}, we obtain $(\phi^*_{2j})^{r_1}(R)\leq d/(c_dR^d)$. Define $a_{2j}=\sup_{\rho}\rho^{d/r_1}\phi^*_{2j}(\rho)$. Using a comparable reasoning, it can be deduced that when $a_{2j}\rightarrow 0$,  $\||x|^{-(d-2)}\ast\phi^*_{2j}\|_{r_2}\rightarrow 0$. Thus H\"{o}lder's inequality implies that
		\begin{equation*}{\label{}}
			\begin{split}
				\left|\mathcal{H}[\phi^*_{1j},\phi^*_{2j}]\right|&\leq \|\phi^*_{1j}\|_{\frac{r_2}{r_2-1}}\||x|^{-(d-2)}\ast\phi^*_{2j}\|_{r_2}\\
				&=\|\phi^*_{1j}\|_{\frac{m_2}{(1+2/d)m_2-1}}\||x|^{-(d-2)}\ast\phi^*_{2j}\|_{r_2}\\
				&\leq\|\phi^*_{1j}\|^{\frac{(1+2/d)m_1m_2-m_1-m_2}{(m_1-1)m_2}}_{1}\|\phi^*_{1j}\|^{\frac{m_1(1-2/dm_2)}{(m_1-1)m_2}}_{m_1}\||x|^{-(d-2)}\ast\phi^*_{2j}\|_{r_2}\\
				&\leq\||x|^{-(d-2)}\ast\phi^*_{2j}\|_{r_2}
				\rightarrow 0.
			\end{split}
		\end{equation*}	
This also contradicts the selection of the maximizing sequence, and  so $\|\phi^*_2\|_1\neq 0$.
		
We now have a maximizing sequence $(\phi^*_{ij})_j$ of  non-negative symmetric decreasing functions which converge pointwise everywhere to non-zero $\phi^*_{i}\in L^1(\mathbb{R}^d)\cap L^{m_i}(\mathbb{R}^d)$ satisfying \eqref{lower bound for w*}, $i=1,2$.  For any $\alpha,\beta>0$ satisfying \eqref{definition of alpha}-\eqref{definition of alpha and beta}, it is possible to assume that $\alpha/\beta\geq 1$. If not, we can take $\alpha/\beta< 1$ if necessary. Then there holds
			\begin{equation*}{\label{}}
				\begin{split}
					1=&\varliminf_{j\rightarrow\infty}\left(\|\phi^*_{1j}\|^{\frac{\alpha}{\beta}}_1\|\phi^*_{2j}\|_1\right)\geq \left(\varliminf_{j\rightarrow\infty}\|\phi^*_{1j}\|^{\frac{\alpha}{\beta}}_1\right)\cdot \left(\varliminf_{j\rightarrow\infty}\|\phi^*_{2j}\|_1\right)\\
					\geq&\left(\varliminf_{j\rightarrow\infty}\|\phi^*_{1j}\|_1\right)^{\frac{\alpha}{\beta}}\cdot \|\phi^*_{2}\|_1\geq\|\phi^*_{1}\|^{\frac{\alpha}{\beta}}_1\|\phi^*_{2}\|_1,
				\end{split}
			\end{equation*}
and hence
			\begin{equation}{\label{inequality for psi in L1}}
					\|\phi^*_{1}\|^{\alpha}_1\|\phi^*_{2}\|^{\beta}_1\leq 1.
		\end{equation}
Since $\|\phi^*_{ij}\|_1$, $i=1,2$, is uniformly bounded for every $j\in\mathbb{N}^+$, we deduce from \eqref{inequality for phi} that
		\begin{equation*}	{\label{inequality for phi ij}}
				0\leq \phi^*_{ij}(R)\leq \mathcal{M}_i(R):=C_1\min\left\{R^{-d},R^{-d/{m_i}}\right\}\quad\text{for}\,\,R>0,\quad i=1,2.
		\end{equation*}	
So $\mathcal{M}_1(|x|)\in L^{p_1}(\mathbb{R}^d)$ with some $p_1\in\left(1/\left(1+2/d-1/m_2\right),m_1\right)$ and $\mathcal{M}_2(|x|)\in L^{p_2}(\mathbb{R}^d)$ with $ p_2=p_1/[(1+2/d)p_1-1]\in(1/\left(1+2/d-1/m_1\right),m_2)$. An application of the Hardy-Littlewood-Sobolev inequality results in
		\begin{equation*}{\label{}}
				\mathcal{H}[\mathcal{M}_1,\mathcal{M}_2]\leq\|\mathcal{M}_1\|_{p_1}\|\mathcal{M}_2\|_{p_2}<\infty.
		\end{equation*}
By utilizing the Lebesgue dominated convergence theorem and (\ref{weak convergence for f*}), we derive
		\begin{equation*}{\label{}}
				\lim_{j\rightarrow\infty}\mathcal{H}[\phi^*_{1j},\phi^*_{2j}]
				=\mathcal{H}[\phi^*_1,\phi^*_2],
		\end{equation*}
which implies that $\mathcal{H}[\phi^*_1,\phi^*_2]=\Lambda^*_{m_1,m_2}$. Moreover, $\mathcal{W}[\phi^*_1,\phi^*_2]\geq \Lambda^*_{m_1,m_2}$ is valid based on \eqref{lower bound for w*}-\eqref{inequality for psi in L1}. Consequently, it follows that $\mathcal{W}[\phi^*_1,\phi^*_2]=\Lambda^*_{m_1,m_2}$, where $(\phi^*_1,\phi^*_2)\in \Gamma_1\times\Gamma_2$ is a maximum for $\Lambda^*_{m_1,m_2}$ and    
		\begin{equation*}
			\|\phi^*_{1}\|^{\alpha}_1\|\phi^*_{2}\|^{\beta}_1= 1,\quad	 \|\phi^*_1\|_{m_1}=\|\phi^*_2\|_{m_2}=1.
		\end{equation*}
Hence, we have successfully completed the proof of this lemma.
	\end{proof}

\begin{remark}
Let $m_*=2d/(d+2)$.	For any $m_1=m_2=m\in(m_*,m^*]$, $h_1=h_2=h\in L^1(\mathbb{R}^d)\cap L^m(\mathbb{R}^d)$, $\alpha+\beta=[(1+2/d)m-2]/(m-1)\in(0,2/d]$ in \eqref{definiton of Lambda}, Lemma \ref{lemma maximizer for H 2} tells that  there exists a function $\phi^*\in L^1(\mathbb{R}^d)\cap L^{m}(\mathbb{R}^d)$ with $\|\phi^*\|_1=\|\phi^*\|_m=1$  satisfying the following variational problem
		\begin{equation*}{\label{}}
				C_c:=\sup_{0\neq h\in L^1(\mathbb{R}^d)\cap L^m(\mathbb{R}^d)}\frac{\mathcal{H}[h,h]}{\|h\|^{\alpha+\beta}_1\|h\|^{2-\alpha-\beta}_{m}}.
		\end{equation*}
Our result includes the case $m=m^*$ in \cite[Lemmas 3.2 and 3.3]{BCL09-CVPDE} or the case $m\in(m_*,m^*)$ in \cite[Theorem 2.1]{KNO2015-CVPDE}.
	\end{remark}
	
In order to create a pronounced curve for the situation when $m_1=m_2=m^*$, it is necessary to formulate an additional maximizing problem:
\begin{equation}{\label{definiton of Pi 2}}
		\Pi^*_{\theta_0}=\sup_{0\neq h_i\in \Gamma_i,i=1,2}\mathcal{V}[h_{1},h_{2}],
\end{equation}
where 
$$
\mathcal{V}[h_{1},h_{2}]:=\frac{\mathcal{H}[h_1,h_2]}{\|h_1\|_1\|h_2\|_1[\theta_0\| h_1\|^{-m^*}_1\|h_1\|^{m^*}_{m^*}+(1-\theta_0)\|h_2\|^{-m^*}_1\|h_2\|^{m^*}_{m^*}]}
$$
and 
$\theta_0=M^{m^*}_1/(M^{m^*}_1+M^{m^*}_2)\in(0,1)$ with $M_1,M_2>0$. 
Now we present a lemma to provide a simple inequality which will be used later.
	\begin{lemma}\label{lemma for Young inequality}	
		Let $a,b>0$, $\theta\in(0,1)$, and $\eta\in(0,1)$. Then	\begin{equation}{\label{inequality for a and b}}
			\begin{split}
				a^{\eta}b^{1-\eta}\leq A(\theta,\eta) \left[\theta a+(1-\theta) b\right]\quad\text{for}\quad a,b>0, \eta\in(0,1),
			\end{split}	
		\end{equation}
		where \begin{equation}{\label{definition of A}}
			\begin{split}
				A(\theta,\eta)=\left(\frac{\eta}{\theta}\right)^{\eta}\left(\frac{1-\eta}{1-\theta}\right)^{1-\eta}>0.
			\end{split}	
		\end{equation} The inequality in \eqref{inequality for a and b} is an equality if and only if $b=[\theta(1-\eta)]/[(1-\theta)\eta]a$. 
	\end{lemma}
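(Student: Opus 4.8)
The plan is to recognize \eqref{inequality for a and b} as a rescaled form of the classical weighted arithmetic--geometric mean (Young) inequality and to derive it by a single change of variables. The starting point is the elementary inequality
$$
x^{\eta}y^{1-\eta}\le \eta x+(1-\eta)y\qquad\text{for }x,y>0,
$$
with equality precisely when $x=y$. This is immediate from the strict concavity of $\log$ combined with Jensen's inequality, since $\log(\eta x+(1-\eta)y)\ge \eta\log x+(1-\eta)\log y=\log(x^{\eta}y^{1-\eta})$, and strict concavity forces $x=y$ in the equality case.

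Next I would substitute $x=\theta a/\eta$ and $y=(1-\theta)b/(1-\eta)$, both positive under the hypotheses $a,b>0$ and $\theta,\eta\in(0,1)$. On the right-hand side the weights cancel the denominators, yielding $\eta x+(1-\eta)y=\theta a+(1-\theta)b$, while on the left
$$
x^{\eta}y^{1-\eta}=\left(\frac{\theta}{\eta}\right)^{\eta}\left(\frac{1-\theta}{1-\eta}\right)^{1-\eta}a^{\eta}b^{1-\eta}.
$$
Dividing through by the positive constant $\left(\theta/\eta\right)^{\eta}\left((1-\theta)/(1-\eta)\right)^{1-\eta}$, whose reciprocal is exactly $A(\theta,\eta)$ as defined in \eqref{definition of A}, gives precisely \eqref{inequality for a and b}.

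Finally, for the equality case I would track the condition $x=y$ through the substitution: $\theta a/\eta=(1-\theta)b/(1-\eta)$ rearranges to $b=[\theta(1-\eta)]/[(1-\theta)\eta]\,a$, which is the claimed equality condition. There is essentially no genuine obstacle in this lemma; the only points requiring care are verifying that the multiplicative constant produced by the substitution is indeed the reciprocal of $A(\theta,\eta)$, and noting that it is the strictness of the concavity of $\log$ that pins down the equality case exactly.
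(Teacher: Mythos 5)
Your proof is correct and matches the paper's approach: the paper simply declares the result ``trivial by Young's inequality,'' and your substitution $x=\theta a/\eta$, $y=(1-\theta)b/(1-\eta)$ into the weighted AM--GM inequality is exactly the routine computation being invoked, including the correct tracking of the equality case $b=[\theta(1-\eta)]/[(1-\theta)\eta]\,a$ via strict concavity of $\log$.
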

	\begin{proof}
		The result is trivial by Young's inequality.
	\end{proof}

Given $\alpha,\beta>0$ in \eqref{definition of alpha}-\eqref{definition of beta} such that $\alpha+\beta=2/d$, and $\eta=(1-\alpha)/m^*\in(0,1)$. Since 
\begin{equation*}\label{inequality for H 2}
	\begin{split}
		\mathcal{H}[h_1,h_2]\leq &\Lambda^*_{m^*,m^*}\|h_1\|^{\alpha}_1\|h_2\|^{\beta}_1\|h_1\|^{1-\alpha}_{m^*}\|h_2\|^{1-\beta}_{m^*}\\
		=&\Lambda^*_{m^*,m^*}\|h_1\|_1\|h_2\|_1\left(\|h_1\|^{-m^*}_1\|h_1\|^{m^*}_{m^*}\right)^{(1-\alpha)/m^*}\left(\|h_2\|^{-m^*}_1\|h_2\|^{m^*}_{m^*}\right)^{(1-\beta)/m^*}\\
		\leq&A(\theta_0,\eta)\Lambda^*_{m^*,m^*}\|h_1\|_1\|h_2\|_1\left[\theta_0\|h_1\|^{-m^*}_1\|h_1\|^{m^*}_{m^*}+(1-\theta_0)\|h_2\|^{-m^*}_1\|h_2\|^{m^*}_{m^*}\right]
	\end{split}
\end{equation*}	
holds true by means of Lemmas \ref{lemma maximizer for H 2} and \ref{lemma for Young inequality}, we have $\Pi^*_{\theta_0}\leq A(\theta_0,\eta)\Lambda^*_{m^*,m^*}$. Hence \eqref{definiton of Pi 2} is well-defined. The following lemma shows the existence of a maximizer for $\Pi^*_{\theta_0}$. 
\begin{lemma}{\label{lemma maximizer for H 4}}
Let $\theta_0=M^{m^*}_1/(M^{m^*}_1+M^{m^*}_2)\in(0,1)$ with some $M_1,M_2>0$. There exists a pair of $(\psi^*_1,\psi^*_2)$ of non-negative, radially symmetric and non-increasing function such that it attains $\Pi^*_{\theta_0}$ with $\|\psi^*_{1}\|_1,\|\psi^*_{2}\|_1\leq 1$ and $\theta_0\|\psi^*_{1}\|^{m^*}_{m^*}+(1-\theta_0)\|\psi^*_{2}\|^{m^*}_{m^*}\leq1$. 
\end{lemma}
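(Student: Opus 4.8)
The plan is to follow the direct method used for Lemma \ref{lemma maximizer for H 2}, exploiting the three scaling invariances of $\mathcal{V}$ that are available precisely because $m_1=m_2=m^*$ and $\alpha+\beta=2/d$: the functional $\mathcal{V}[h_1,h_2]$ in \eqref{definiton of Pi 2} is unchanged under each independent scalar multiplication $h_i\mapsto c_ih_i$ ($c_i>0$) and under the common dilation $h_i\mapsto h_i(\sigma\,\cdot)$ ($\sigma>0$). First I would take a maximizing sequence $(h_{1j},h_{2j})$ and use these three degrees of freedom to produce $(\phi_{1j},\phi_{2j})$ with the same value of $\mathcal{V}$ but normalized so that $\|\phi_{ij}\|_1=1$ for $i=1,2$ and $\theta_0\|\phi_{1j}\|_{m^*}^{m^*}+(1-\theta_0)\|\phi_{2j}\|_{m^*}^{m^*}=1$ simultaneously: the common dilation multiplies the scale‑invariant quantity $\theta_0\|h_1\|_1^{-m^*}\|h_1\|_{m^*}^{m^*}+(1-\theta_0)\|h_2\|_1^{-m^*}\|h_2\|_{m^*}^{m^*}$ by $\sigma^{d-2}$ and leaves it unaffected by the scalars, so one sets it to $1$ by dilation and then sets $\|\phi_{ij}\|_1=1$ by the scalars, after which this quantity coincides with the bracket above. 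With this normalization the denominator of $\mathcal{V}$ equals $1$ along the sequence, so $\mathcal{V}[\phi_{1j},\phi_{2j}]=\mathcal{H}[\phi_{1j},\phi_{2j}]\to\Pi^*_{\theta_0}$.

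Next I would pass to the symmetric non‑increasing rearrangements $\phi^*_{ij}$. By the Riesz rearrangement inequality \cite[Lemma 2.1]{Lieb1983-AM} the $L^1$ and $L^{m^*}$ norms are preserved and $\mathcal{H}$ does not decrease, so $(\phi^*_{1j},\phi^*_{2j})$ remains maximizing with the same two normalizations. Monotonicity together with the normalizations gives the uniform pointwise bound $\phi^*_{ij}(R)\le C\min\{R^{-d},R^{-d/m^*}\}$ exactly as in \eqref{inequality for phi}; Helly's theorem then yields a subsequence converging pointwise to non‑negative radially non‑increasing limits $\psi^*_i$, and the dominating envelope $\min\{R^{-d},R^{-d/m^*}\}$ lies in the Lebesgue spaces needed to apply the Hardy--Littlewood--Sobolev inequality to the pair of envelopes. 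Dominated convergence then forces $\mathcal{H}[\phi^*_{1j},\phi^*_{2j}]\to\mathcal{H}[\psi^*_1,\psi^*_2]=\Pi^*_{\theta_0}$, while Fatou's lemma gives $\|\psi^*_i\|_1\le1$ and $\theta_0\|\psi^*_1\|_{m^*}^{m^*}+(1-\theta_0)\|\psi^*_2\|_{m^*}^{m^*}\le1$. To guarantee that both limits are nontrivial I would reuse verbatim the argument of Lemma \ref{lemma maximizer for H 2} based on Lemma \ref{lemma in Lieb paper}: if $\sup_\rho\rho^{d/r_1}\phi^*_{ij}(\rho)\to0$ for one index, then $\||x|^{-(d-2)}\ast\phi^*_{ij}\|_{r_2}\to0$ and H\"older forces $\mathcal{H}\to0$, contradicting that the limit value is the positive number $\Pi^*_{\theta_0}$; hence $\psi^*_1,\psi^*_2\neq0$.

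The step I expect to be the genuine obstacle, and the one where this lemma departs from Lemma \ref{lemma maximizer for H 2}, is showing that the limiting pair actually attains $\Pi^*_{\theta_0}$. Writing $s_i=\|\psi^*_i\|_1$ and $t_i=\|\psi^*_i\|_{m^*}^{m^*}$, the value on the limit is $\mathcal{V}[\psi^*_1,\psi^*_2]=\Pi^*_{\theta_0}/D^*$ with $D^*=\theta_0 s_1^{1-m^*}s_2t_1+(1-\theta_0)s_1s_2^{1-m^*}t_2$, and the definition of the supremum already forces $D^*\ge1$. In Lemma \ref{lemma maximizer for H 2} the denominator carried only positive powers of the $L^1$ norms, so the Fatou bounds produced the reverse inequality for free; here the exponents $1-m^*<0$ destroy that argument and make loss of $L^1$ mass a real threat. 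The crux is therefore to prove that no $L^1$ mass escapes to spatial infinity, i.e. $s_1=s_2=1$. Granting this, $D^*=\theta_0t_1+(1-\theta_0)t_2$, which is $\le1$ by Fatou and $\ge1$ by the supremum, so $D^*=1$: the pair attains $\Pi^*_{\theta_0}$, the bracket equals $1$, and the asserted bounds hold (this simultaneously excludes concentration at the origin, which would force a strict Fatou loss in the $t_i$). To rule out escape I would argue from maximality and the coupling: an escaping tail of $\phi^*_{ij}$ is asymptotically flat, hence carries vanishing $L^{m^*}$ density, and it is supported where the Newtonian potential $|x|^{-(d-2)}\ast\phi^*_{kj}$ of the nonvanishing partner decays, so it contributes nothing to $\mathcal{H}$ while still consuming the $L^1$ budget that fixes the normalization; relocating or compressing that mass back to the common scale of the sequence strictly increases the quotient $\mathcal{V}$, contradicting that the sequence is maximizing. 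Finally, the specialization $h_1=h_2$ collapses the construction to \cite[Lemma 3.3]{BCL09-CVPDE}, a useful consistency check.
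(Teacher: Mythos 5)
Your normalization and compactness steps coincide with the paper's own proof. The paper uses exactly the scalings you describe: it sets $\psi_{ij}(x)=\lambda_{ij}h_{ij}(\mu_j x)$ with $\lambda_{ij}=\|h_{ij}\|_1^{-1}\mu_j^d$ and $\mu_{j}=\left[\theta_0\|h_{1j}\|^{-m^*}_1\|h_{1j}\|^{m^*}_{m^*}+(1-\theta_0)\|h_{2j}\|^{-m^*}_1\|h_{2j}\|^{m^*}_{m^*}\right]^{-1/(d-2)}$ (your observation that the scale-invariant bracket picks up $\sigma^{d-2}$ under dilation is exactly why the exponent is $-1/(d-2)$), so that $\|\psi_{ij}\|_1=1$ and the bracket equals $1$ along the sequence; it then rearranges, extracts a pointwise limit via the monotonicity envelope as in \eqref{inequality for phi} and Helly's theorem, and concludes $\mathcal{H}[\psi^*_1,\psi^*_2]=\Pi^*_{\theta_0}$ by dominated convergence, with the bounds \eqref{lower bound for w* 2} from Fatou. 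Your separate non-vanishing argument via Lemma \ref{lemma in Lieb paper} is sound but superfluous here: unlike in Lemma \ref{lemma maximizer for H 2}, each $\|\psi_{ij}\|_1$ equals $1$ exactly, and $\psi^*_i\neq 0$ is immediate from $\mathcal{H}[\psi^*_1,\psi^*_2]=\Pi^*_{\theta_0}\geq C_*>0$.

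The divergence, and the gap, is your final paragraph. You read the lemma as asserting $\mathcal{V}[\psi^*_1,\psi^*_2]=\Pi^*_{\theta_0}$, and you are right that, writing $s_i=\|\psi^*_i\|_1$ and $t_i=\|\psi^*_i\|^{m^*}_{m^*}$, the Fatou bounds alone do not force $D^*=\theta_0 s_1^{1-m^*}s_2t_1+(1-\theta_0)s_1s_2^{1-m^*}t_2\leq 1$, since $1-m^*<0$ (for instance $s_1=1/2$, $s_2=1$, $t_1=t_2=1$ and $\theta_0$ close to $1$ give $D^*>1$). This is precisely why the lemma is stated with ``$\leq$'' normalizations, in contrast to the equalities proved in Lemma \ref{lemma maximizer for H 2}: the paper's proof establishes exactly $\mathcal{H}[\psi^*_1,\psi^*_2]=\Pi^*_{\theta_0}$ under those inequality bounds and stops there, and this weaker conclusion is all that the subsequent arguments use --- in particular Lemma \ref{lemma for blow up in supercritical case3} only needs near-maximizers of \eqref{definiton of Pi 2}. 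Your proposed repair, however, is not a proof: the claim that an escaping tail ``contributes nothing to $\mathcal{H}$'' while ``relocating or compressing that mass back\ldots strictly increases the quotient'' is a heuristic. You never construct the comparison pair, never estimate the tail's interaction with the core through the kernel $|x|^{-(d-2)}$ (a flat tail of height $\epsilon_j$ on a ball of radius $R_j$ with $\epsilon_jR_j^d\sim 1$ has small $L^{m^*}$ norm, but its contribution to $\mathcal{H}$ against the non-vanishing partner requires a genuine estimate), and never derive the strict increase needed for the contradiction; likewise your parenthetical dismissal of a Fatou loss in the $t_i$ is asserted rather than proved. So your attempt matches the paper's proof up to exactly the point where the paper stops; the additional attainment claim $s_1=s_2=1$, $D^*=1$ would require a concentration-compactness analysis that neither your sketch nor the paper carries out, and you should either supply it or weaken your conclusion to the statement the paper actually proves and uses.
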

\begin{proof}
We let $(h_{ij})_{j\in\mathbb{N}}\in\Gamma_i$, $i=1,2$, be a  non-negative maximizing sequence such that	
\begin{equation*}	{\label{minimizing sequence for F 2}}
			\lim_{j\rightarrow\infty}\mathcal{V}[h_{1j},h_{2j}]=\Pi^*_{\theta_0}.
	\end{equation*}	
Denote $\psi_{ij}(x):=\lambda_{ij}h_{ij}(\mu_j x)$ with $\lambda_{ij}=\|h_{ij}\|^{-1}_1\mu^d_j$ and 
$$
\mu_{j}=\left[\theta_0\|h_{1j}\|^{-m^*}_1\|h_{1j}\|^{m^*}_{m^*}+(1-\theta_0)\|h_{2j}\|^{-m^*}_1\|h_{2j}\|^{m^*}_{m^*}\right]^{-{1}/(d-2)}.
$$ 
Then $\|\psi_{ij}\|_1=1$, $i=1,2$, $\theta_0\|\psi_{1j}\|^{-m^*}_1\|\psi_{1j}\|^{m^*}_{m^*}+(1-\theta_0)\|\psi_{2j}\|^{-m^*}_1\|\psi_{2j}\|^{m^*}_{m^*}=\theta_0\|\psi_{1j}\|^{m^*}_{m^*}+(1-\theta_0)\|\psi_{2j}\|^{m^*}_{m^*}=1$ and 
	\begin{equation*}{\label{}}
			\mathcal{V}[\psi_{1j},\psi_{2j}]=\mathcal{H}[h_{1j},h_{2j}].
	\end{equation*}
By rearranging of $\psi_{ij}$, we can assume that it is a non-negative, symmetric, and decreasing function with the following property
\begin{equation*}{\label{}}
			\lim_{j\rightarrow\infty}\mathcal{H}[\psi_{1j},\psi_{2j}]=\Pi^*_{\theta_0}.
	\end{equation*}
Then by the uniform bound of $\|\psi_{ij}\|_l$, $l\in\{1,{m^*}\}$, one asserts that $\psi_{ij}$ converges pointwise to some non-negative function $\psi^*_i\in L^1(\mathbb{R}^d)\cap L^{m^*}(\mathbb{R}^d)$ satisfying
	\begin{equation}{\label{lower bound for w* 2}}
		\begin{split}
		\|\psi^*_{1}\|_1,\|\psi^*_{2}\|_1\leq 1,\quad \theta_0\|\psi^*_{1}\|^{m^*}_{m^*}+(1-\theta_0)\|\psi^*_{2}\|^{m^*}_{m^*}\leq1.
		\end{split}
	\end{equation}	
Furthermore, 
$\lim_{j\rightarrow\infty}\mathcal{H}[\psi^*_{1j},\psi^*_{2j}]=\mathcal{H}[\psi^*_1,\psi^*_2]$ by the Lebesgue dominated convergence theorem. Hence 	$\mathcal{H}[\psi^*_1,\psi^*_2]=\Pi^*_{\theta_0}$ with $\psi^*_i\neq 0$, $i=1,2$, and we finish our proof.
\end{proof}
\begin{remark}
When $M_1=M_2$ in the sense that $\theta_{0}=1/2$, we claim that 
$$
\Pi^*_{\theta_0}=C_*,
$$
where $C_*$ is given by \eqref{definition of Cstar}. To see this, let us apply \cite[Theorem 9.8]{Lieb2001}, the Hardy-Littlewood-Sobolev inequality and Young's inequality to see that
\begin{equation}\label{inequality for mathcal H f1 f2}
\begin{split}
\vert \mathcal{H}[f_1,f_2]\vert\leq& \sqrt{\mathcal{H}[f_1,f_1]}\sqrt{\mathcal{H}[f_2,f_2]}\leq C_*\Vert f_1\Vert^{1/d}_1\Vert f_2\Vert^{1/d}_1\Vert f_1\Vert^{m^*/2}_{m^*}\Vert f_2\Vert^{m^*/2}_{m^*}\\
\leq&C_*\Vert f_1\Vert_1\Vert f_2\Vert_1\left[\frac{1}{2}\Vert f_1\Vert^{-m^*}_1\Vert f_1\Vert^{m^*}_{m^*}+\frac{1}{2}\Vert f_2\Vert^{-m^*}_1\Vert f_2\Vert^{m^*}_{m^*}\right],
\end{split}
\end{equation}
which implies that the inequality $\Pi^*_{\theta_0}\leq C_*$. This together with the fact that $C_*\leq \Pi^*_{\theta_0}$ yields $\Pi^*_{\theta_0}=C_*$. Furthermore, again by \cite[Theorem 9.8]{Lieb2001}, one can see that \eqref{inequality for mathcal H f1 f2} is equality if and only if $f_1=f_2$. As a consequence, we conclude from Lemma \ref{lemma maximizer for H 4} that  there exists a pair of nonnegative maximizing  functions $(\psi^*_1,\psi^*_2)$ with $\psi^*_1=\psi^*_2$ satisfying $\Vert \psi_i\Vert_1=\Vert \psi_i\Vert_{m^*}=1$, $i=1,2$, for $\Pi^*_{\theta_0}$ in the case $M_1=M_2$. This argument had been obtained in \cite[Lemma 3.3]{BCL09-CVPDE}.
\end{remark}
	
	\section{Sufficient condition on the global well-posedness of solutions}\label{Sufficient condition on well-posedness of solutions}
	In this section, a simple criterion on the global existence of free energy solution to \eqref{system} will be given. This criterion is weaker than that obtained in \cite[Lemma 2.3]{CK2021-ANA}. Inspired by \cite{BianLiu2013}, we first consider an  approximated system as
	\be \label{TSTC approxiamation}\begin{cases}
		\partial_t u_{i\epsilon} =  \Delta (u_{i\epsilon}+\epsilon)^{m_i}-\nabla \cdot ((u_{i\epsilon}+\epsilon)\nabla v_{j\epsilon}), &x\in \mathbb{R}^d, t>0, \\[0.2cm]
		-\Delta v_{j\epsilon} =J_{\epsilon}\ast u_{j\epsilon},  & x\in \mathbb{R}^d, t>0, \\[0.2cm]
		u_{i\epsilon}(x,0)=u^0_{i\epsilon}(x)>0, \,\,\,i,j=1,2,\,\,i\neq j,\,\,& x\in \mathbb{R}^d.
	\end{cases}
	\ee
	Here
	$J_{\epsilon}(x)=J\left(x/\epsilon\right)/\epsilon^d$, $ J(x)=c_dd(d-2)(|x|^2+1)^{-(d+2)/d}$
	and $\|J_{\epsilon}\|_1=1$. Then $v_{j\epsilon}$ can be expressed by
	$$
	v_{j\epsilon}(x,t)=\mathcal{K}_{\epsilon}\ast u_{j\epsilon},\quad j=1,2,
	$$
	where $\mathcal{K}_{\epsilon}(|x|)=c_d(|x|^2+\epsilon^2)^{-(d-2)/2}$. Noted that
	$$
	\nabla \mathcal{K}_{\epsilon}=-c_d(d-2)\frac{x}{(|x|^2+\epsilon^2)^{\frac{d}{2}}},\quad\,\Delta \mathcal{K}_{\epsilon}=-J_{\epsilon}.
	$$
	
	Let $u^{0}_{i\epsilon}\in C^{\infty}(\mathbb{R}^d)$ be an approximation of $u^0_{i}$ with a sequence of mollifiers. Then there exists a constant $\epsilon_0>0$ such that for any $\epsilon\in(0,\epsilon_0)$, the constructed $u^{0}_{i\epsilon}$ satisfies
	$$\|u^{0}_{i\epsilon}\|_1=\|u^0_{i}\|_1=M_i,$$
	$$u^{0}_{i\epsilon}\in L^{r}(\mathbb{R}^d)\quad \text{and}\quad \|u^{0}_{i\epsilon}-u^0_i\|_{r}\rightarrow 0\,\,\,\text{for all}\,\,\,r\geq 1,i=1,2.$$
	
According to the local existence of strong solution to one-species chemotaxis system  {\cite[Proposition 4.1]{S2006-DIE}}, it follows that
	\begin{lemma}\label{Local existence for AS}Let $m_1,m_2>1$. Then there exists $T^{\epsilon}_{\max}\in(0,\infty]$, denoting the maximal existence time, such that the system (\ref{TSTC approxiamation}) has a unique non-negative strong solution $u_{i\epsilon}\in \mathbb{W}^{2,1}_{r}(Q_T) $, $i=1,2$, for some $r>1$, where $Q_T=\mathbb{R}^d\times (0,T)$, $T\in(0,T^{\epsilon}_{\max}]$, and
		$$\mathbb{W}^{2,1}_{r}(Q_T):=\{u\in L^r(0,T;W^{2,r}(\mathbb{R}^d))\cap W^{1,r}(0,T;L^{r}(\mathbb{R}^d))\}.$$
		Moreover, if  $T^{\epsilon}_{\max}<\infty$, then
		$$
		\lim_{t\rightarrow T^{\epsilon}_{\max}}\sum\limits^2_{i=1}\|u_{i\epsilon}(\cdot,t)\|_{\infty}=\infty.
		$$
	\end{lemma}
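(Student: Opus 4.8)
The plan is to obtain the pair $\textbf{u}_\epsilon=(u_{1\epsilon},u_{2\epsilon})$ as the fixed point of a contraction mapping, following the single-species scheme of \cite[Proposition 4.1]{S2006-DIE}; the only genuinely new feature is the cross-coupling through the two regularised potentials. The decisive structural point is that the shift by $\epsilon$ makes each diffusion operator \emph{uniformly} (non-degenerately) parabolic: writing $\Delta(w+\epsilon)^{m_i}=\nabla\cdot(m_i(w+\epsilon)^{m_i-1}\nabla w)$, the coefficient $m_i(w+\epsilon)^{m_i-1}$ is bounded below by $m_i\epsilon^{m_i-1}>0$ and depends smoothly on $w$ away from $w=-\epsilon$. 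Moreover $\nabla\mathcal{K}_\epsilon(x)=-c_d(d-2)x/(|x|^2+\epsilon^2)^{d/2}$ is bounded and smooth and $J_\epsilon\in L^1\cap L^\infty$, so the nonlocal operators $u\mapsto\nabla(\mathcal{K}_\epsilon\ast u)$ and $u\mapsto J_\epsilon\ast u$ are bounded on every $L^r(\mathbb{R}^d)$ with (large, but finite) $\epsilon$-dependent norms. Thus the inter-species interaction enters only as a globally Lipschitz, lower-order nonlocal drift, while the second-order part of each equation involves that species alone.

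First I would fix $r$ large and a time $T>0$ to be chosen, and work in a closed ball $B_R\subset X_T:=\big(C([0,T];L^r(\mathbb{R}^d))\cap L^\infty(Q_T)\big)^2$ centred at the initial datum, with radius $R$ fixed by $\|u^0_{i\epsilon}\|_{L^r\cap L^\infty}$. Given $\bar{\textbf{u}}=(\bar u_1,\bar u_2)\in B_R$, I set $\bar v_j:=\mathcal{K}_\epsilon\ast\bar u_j$ and define $\Phi(\bar{\textbf{u}})=(u_1,u_2)$ as the solution of the two \emph{linear}, uniformly parabolic problems
\begin{equation*}
\partial_t u_i=\nabla\cdot\big(m_i(\bar u_i+\epsilon)^{m_i-1}\nabla u_i\big)-\nabla\bar v_j\cdot\nabla u_i+(u_i+\epsilon)(J_\epsilon\ast\bar u_j),\qquad u_i(\cdot,0)=u^0_{i\epsilon},
\end{equation*}
obtained by freezing the diffusion coefficient and the drift along $\bar{\textbf{u}}$ (here I used $\Delta\bar v_j=-J_\epsilon\ast\bar u_j$). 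For $r$ large the embedding $\mathbb{W}^{2,1}_r(Q_T)\hookrightarrow C(\overline{Q_T})$ guarantees that the iterates are continuous, so the frozen coefficient $m_i(\bar u_i+\epsilon)^{m_i-1}$ is continuous and lies between $m_i\epsilon^{m_i-1}$ and a constant depending on $R$; since $\nabla\bar v_j\in L^\infty(Q_T)$ and the inhomogeneity $(u_i+\epsilon)(J_\epsilon\ast\bar u_j)$ is of lower order, the maximal $L^r$-regularity theory for linear parabolic equations used in \cite{S2006-DIE} yields a unique $u_i\in\mathbb{W}^{2,1}_r(Q_T)$ with a quantitative bound on $\|u_i\|_{\mathbb{W}^{2,1}_r(Q_T)}$.

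Next I would close the scheme. The maximal-regularity estimate together with $\|\Phi(\bar{\textbf u})-\textbf{u}^0_\epsilon\|_{X_T}\le\omega(T)$, $\omega(T)\to0$ as $T\to0$, shows $\Phi$ maps $B_R$ into itself for $T$ small. For the contraction, subtracting the two frozen problems for $\bar{\textbf u},\tilde{\textbf u}\in B_R$, the difference of diffusion coefficients is controlled by $|\bar u_i-\tilde u_i|$ (local Lipschitz continuity of $s\mapsto(s+\epsilon)^{m_i-1}$ away from $s=-\epsilon$) and the differences of drift and inhomogeneity by $\|\nabla\mathcal{K}_\epsilon\|_\infty$ and $\|J_\epsilon\|_1$ times $\|\bar u_j-\tilde u_j\|$; the same estimate gives $\|\Phi(\bar{\textbf u})-\Phi(\tilde{\textbf u})\|_{X_T}\le L(T)\|\bar{\textbf u}-\tilde{\textbf u}\|_{X_T}$ with $L(T)\to0$ as $T\to0$. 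Shrinking $T$ makes $\Phi$ a contraction, and Banach's theorem produces the unique local solution $\textbf{u}_\epsilon\in(\mathbb{W}^{2,1}_r(Q_T))^2$. Non-negativity follows from a comparison argument: the constant $0$ is a subsolution of the $i$-th equation whenever $u_j\geq0$, since at $u_i=0$ the right-hand side reduces to $\epsilon\,(J_\epsilon\ast u_j)\geq0$; testing each equation with $(u_i)_-$ and summing over $i=1,2$ gives $u_{i\epsilon}\geq0$ (indeed $>0$ by the strong maximum principle, as $u^0_{i\epsilon}>0$).

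Finally, I define $T^\epsilon_{\max}$ as the supremum of existence times obtained by iterating the construction from successive times $t_0$. Because mass is conserved, $\|u_{i\epsilon}(\cdot,t)\|_1=M_i$, the length of each local step can be chosen to depend only on $\sum_i\|u_{i\epsilon}(\cdot,t_0)\|_\infty$ (the upper bounds on the coefficients and on the drift $\nabla v_j=\nabla\mathcal{K}_\epsilon\ast u_j$, via $\|u_j\|_1=M_j$ and $\|u_j\|_\infty$, being controlled by this quantity). Hence if $\sum_i\|u_{i\epsilon}(\cdot,t)\|_\infty$ stayed bounded as $t\uparrow T^\epsilon_{\max}<\infty$, the solution could be continued beyond $T^\epsilon_{\max}$, contradicting maximality; this yields the stated blow-up alternative. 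The main obstacle is the bookkeeping of the $\epsilon$-dependent constants so that the self-mapping and the contraction hold on one common interval and so that the continuation step depends only on $\|u_{i\epsilon}\|_\infty$; conceptually there is no difficulty beyond the single-species case, since each species diffuses only through itself and the coupling enters solely as a smooth, bounded, lower-order nonlocal drift.
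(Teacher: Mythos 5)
Your route is the same one the paper relies on: the paper gives no self-contained proof of this lemma at all, but quotes it from the one-species result \cite[Proposition 4.1]{S2006-DIE} (together with Amann's theory plus a fixed-point argument for the local theory, cf.\ \cite[Section 2]{CK2021-ANA}), and your proposal reconstructs exactly that standard scheme: uniform parabolicity from the shift by $\epsilon$, frozen-coefficient linear problems solved by maximal $L^r$-regularity, a Banach fixed point, comparison for non-negativity, and a continuation argument for the blow-up alternative. In outline this is correct, but two steps, as written, do not go through.

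First, there is a function-space mismatch in the fixed-point setup. Your ball $B_R$ sits in $X_T=\big(C([0,T];L^r)\cap L^\infty(Q_T)\big)^2$, so for a generic $\bar u_i\in B_R$ the frozen coefficient $m_i(\bar u_i+\epsilon)^{m_i-1}$ is only bounded and measurable. Maximal $L^r$-regularity producing $\mathbb{W}^{2,1}_r$ solutions requires non-divergence structure with (at least VMO or uniformly continuous) leading coefficients, and converting $\nabla\cdot\big(m_i(\bar u_i+\epsilon)^{m_i-1}\nabla u_i\big)$ into that form costs a term $\nabla\big(m_i(\bar u_i+\epsilon)^{m_i-1}\big)\cdot\nabla u_i$, i.e.\ it needs $\nabla\bar u_i\in L^\infty$ --- which elements of $X_T$ do not have; with merely bounded measurable $a$, divergence-form theory gives $L^2(0,T;H^1)$ weak solutions, not $W^{2,r}$ in space. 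Your remark that ``the iterates are continuous'' conflates the regularity of the image $\Phi(B_R)$ with that of the domain on which the linear solvability must hold. The standard repair (and the structure of Sugiyama's actual proof) is the two-norm scheme: take the invariant set bounded in $\mathbb{W}^{2,1}_r(Q_T)$ with $r>d+2$, so that $\bar u_i$ and $\nabla\bar u_i$ are H\"older continuous by embedding and the frozen problem is solvable with quantitative bounds; prove the contraction in the weaker $X_T$-norm; and recover that the fixed point lies in $\mathbb{W}^{2,1}_r$ by weak compactness. Second, mass is \emph{not} conserved by \eqref{TSTC approxiamation}: since $-\nabla\cdot\big((u_{i\epsilon}+\epsilon)\nabla v_{j\epsilon}\big)=-\nabla\cdot\big(u_{i\epsilon}\nabla v_{j\epsilon}\big)+\epsilon\,J_\epsilon\ast u_{j\epsilon}$, integration gives $\frac{d}{dt}\|u_{i\epsilon}(t)\|_1=\epsilon\|u_{j\epsilon}(t)\|_1$, so the masses grow (exponentially in $\epsilon t$); this is harmless on finite intervals, and your continuation argument survives once ``mass is conserved'' is replaced by a local-in-time mass bound. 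Relatedly, $\nabla\mathcal{K}_\epsilon\notin L^1(\mathbb{R}^d)$ (it decays only like $|x|^{1-d}$), so $u\mapsto\nabla\mathcal{K}_\epsilon\ast u$ is \emph{not} bounded on every $L^r$ as you claim; the bound $\|\nabla\bar v_j\|_\infty<\infty$ needs the $L^1\cap L^\infty$ control of the iterates, which must therefore be carried inside the invariant set.
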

	
The next lemma is looking for sufficient conditions, in terms of $L^{l_i}$-bound on $u_{i\epsilon}$ with some large $l_i>1$, guaranteeing global existence.
	\begin{lemma}\label{lemma Lp bound for u and Lq for wq}
		Let $m_1,m_2>1$ satisfy $1/m_1+1/m_2<1+2/d$, and let $T>0$. Assume that there exists a positive constant $C_0>0$ such that
		\begin{equation*}\label{}
			\begin{split}
				\|u_{i\epsilon}(\cdot,t)\|_{k_i}&<C_0,\quad i=1,2,
				\quad{for\,\,all}\,\,t>0,
			\end{split}
		\end{equation*}
		where $k_i=m_j/[(1+2/d)m_j-1]$, $i\neq j\in\{1,2\}$. Then for some large $l_i>1$, $i=1,2$, one obtains the existence of a constant $C>0$ such that 	 
		\begin{equation}\label{Linfty estimate for u}
			\|u_{i\epsilon}\|_{L^{\infty}\left(0,T;L^1_+(\mathbb{R}^d)\cap L^{l_i}(\mathbb{R}^d)\right)}\leq C\quad\text{for}\,\,t\in(0,T)
		\end{equation}
		and
		\begin{equation}\label{L2 estimate for w4}
			\left\|\nabla u^{\frac{l_i+m_i-1}{2}}_{i\epsilon}\right\|_{L^2(0,T;L^2(\mathbb{R}^d))}\leq C\quad\text{for}\,\,t\in(0,T).
		\end{equation}
	\end{lemma}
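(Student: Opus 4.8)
The plan is to run a uniform-in-$\epsilon$ energy estimate on each component of the regularized system \eqref{TSTC approxiamation}. Writing $U_i:=u_{i\epsilon}+\epsilon$, I would test the $i$-th equation against $U_i^{l_i-1}-\epsilon^{l_i-1}$ and track the finite, nonnegative functional $Y_i(t):=\int_{\mathbb{R}^d}\big[l_i^{-1}(U_i^{l_i}-\epsilon^{l_i})-\epsilon^{l_i-1}u_{i\epsilon}\big]\,dx$, whose integrand is $O(u_{i\epsilon}^2)$ near $u_{i\epsilon}=0$ (hence integrable) and which, up to the conserved mass $M_i$, is comparable to $\|u_{i\epsilon}\|_{l_i}^{l_i}$. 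Integrating by parts twice (the added constant $\epsilon^{l_i-1}$ has vanishing gradient and so does not alter the flux terms), the porous-medium diffusion yields the good dissipation $-c(l_i,m_i)\|\nabla U_i^{(l_i+m_i-1)/2}\|_2^2$, while the cross-diffusion term, after inserting $-\Delta v_{j\epsilon}=J_\epsilon\ast u_{j\epsilon}$, becomes $\tfrac{l_i-1}{l_i}\int_{\mathbb{R}^d}U_i^{l_i}(J_\epsilon\ast u_{j\epsilon})\,dx$. This produces, for each $i$,
$$\frac{d}{dt}Y_i+c(l_i,m_i)\big\|\nabla U_i^{(l_i+m_i-1)/2}\big\|_2^2=\frac{l_i-1}{l_i}\int_{\mathbb{R}^d}U_i^{l_i}(J_\epsilon\ast u_{j\epsilon})\,dx,$$
and I would then add the two identities so that both dissipation terms are simultaneously available.

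The heart of the matter is the coupling term, which genuinely mixes the two species. Since $\|J_\epsilon\|_1=1$, Young's convolution inequality gives $\|J_\epsilon\ast u_{j\epsilon}\|_\tau\le\|u_{j\epsilon}\|_\tau$, so Hölder with $1/\sigma+1/\tau=1$ yields $\int_{\mathbb{R}^d}U_i^{l_i}(J_\epsilon\ast u_{j\epsilon})\,dx\le\|U_i\|_{l_i\sigma}^{l_i}\,\|u_{j\epsilon}\|_\tau$. Setting $w_i:=U_i^{(l_i+m_i-1)/2}$, so that $\|\nabla w_i\|_2$ is exactly the dissipation already produced, I would control $\|U_i\|_{l_i\sigma}$ by Gagliardo--Nirenberg--Sobolev, interpolating $w_i$ between its Sobolev target $L^{2^*}$ (bounded by $\|\nabla w_i\|_2$) and a low norm anchored at the a priori critical bound $\|u_{i\epsilon}\|_{k_i}\le C_0$ together with the conserved $L^1$ mass, and symmetrically for $\|u_{j\epsilon}\|_\tau$ using $\|\nabla w_j\|_2$ and $\|u_{j\epsilon}\|_{k_j}\le C_0$. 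The pair $(\sigma,\tau)$ is a free parameter: pushing $\tau$ above $k_j$ transfers part of the $u_{j\epsilon}$ factor onto its own gradient term and simultaneously lowers the exponent $l_i\sigma$ demanded of $U_i$ into the range reachable from $\|\nabla w_i\|_2$. This is precisely why neither species can be treated in isolation and both dissipation terms must be spent at once.

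For fixed but sufficiently large $l_1,l_2$, I expect the powers of $\|\nabla w_1\|_2$ and $\|\nabla w_2\|_2$ arising from these interpolations to combine to strictly less than the second-order homogeneity carried by $\|\nabla w_1\|_2^2+\|\nabla w_2\|_2^2$, the slack being supplied exactly by the subcritical hypothesis $1/m_1+1/m_2<1+2/d$ (equivalently, $k_i$ sits strictly below the scaling exponent dictated by the drift). Young's inequality then absorbs every gradient contribution into the two dissipation terms, leaving a remainder controlled by $C_0$, the masses $M_i$, and the functionals $Y_i$ themselves, i.e.\ a differential inequality $\frac{d}{dt}(Y_1+Y_2)+\tfrac12\sum_i c(l_i,m_i)\|\nabla w_i\|_2^2\le C\big(1+Y_1+Y_2\big)$. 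Gronwall's lemma on $[0,T]$ then yields the $L^\infty(0,T;L^{l_i})$ bound in \eqref{Linfty estimate for u} (the $L^1$ component being just mass conservation $\|u_{i\epsilon}(t)\|_1=M_i$), and integrating the inequality over $(0,T)$ gives the dissipation bound \eqref{L2 estimate for w4}. The main obstacle is the exponent bookkeeping in this coupled Gagliardo--Nirenberg step: choosing $l_1,l_2$ and $(\sigma,\tau)$ so that subcriticality leaves a strictly positive absorption margin, and verifying that the leftover complementary factors are conserved or a priori controlled norms rather than uncontrolled powers of $Y_i$ that would destroy the Gronwall closure.
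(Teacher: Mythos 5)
Your plan matches the paper's proof in all essentials: an $L^{l_i}$ energy estimate summed over both species, H\"older together with $\|J_\epsilon\ast u_{j\epsilon}\|_{\tau}\le\|u_{j\epsilon}\|_{\tau}$ on the cross term, Gagliardo--Nirenberg interpolation anchored at the assumed critical bounds $\|u_{i\epsilon}\|_{k_i}\le C_0$ and the conserved mass, and Young absorption into both dissipation terms simultaneously, with the strict absorption margin supplied exactly by $1/m_1+1/m_2<1+2/d$ (the paper's exponent bookkeeping is carried out in \eqref{choice of k and l 111}--\eqref{choice of k and l 444} and closed by linking $l_1,l_2$ through \eqref{relationship betweeen k and l}). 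The only cosmetic differences are that the paper tests with $l_iu_{i\epsilon}^{l_i-1}$ and handles the extra $\epsilon$-term $I_2$ by the same estimates rather than via your shifted test function $U_i^{l_i-1}-\epsilon^{l_i-1}$, and that, since the interpolation anchors are a priori bounded, the post-Young remainder is a pure constant $c_2+c_3$, so the Gronwall closure you worry about is automatic and in fact unnecessary --- direct integration in time already yields \eqref{Linfty estimate for u} and \eqref{L2 estimate for w4}.
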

	\begin{proof}
		Multiplying $(\ref{TSTC approxiamation})_i$ by $l_iu^{l_i-1}_{i\epsilon}$, and summing them up,  one can see that
		\begin{equation}\label{inequality for uk and wl}
			\begin{split}
				\sum\limits^2_{i=1}\frac{d}{dt}&\int_{\mathbb{R}^d}u^{l_i}_{i\epsilon}dx+\sum\limits^2_{i=1}\frac{4m_il_i(l_i-1)}{(l_i+m_i-1)^2}\int_{\mathbb{R}^d}\left|\nabla u^{\frac{l_i+m_i-1}{2}}_{i\epsilon}\right|^2dx\\[1mm]
				\leq&-\sum\limits^2_{i=1}(l_i-1)\int_{\mathbb{R}^d}u^{l_i}_{i\epsilon}\Delta v_{j\epsilon}dx-\epsilon\sum\limits^2_{i=1} l_i \int_{\mathbb{R}^d}u^{l_i-1}_{i\epsilon}\Delta v_{j\epsilon}dx\\[1mm]
				=&\sum\limits^2_{i=1}(l_i-1)\int_{\mathbb{R}^d}u^{l_i}_{i\epsilon}J_{\epsilon}\ast u_{j\epsilon}dx+\epsilon\sum\limits^2_{i=1} l_i\int_{\mathbb{R}^d}u^{l_i-1}_{i\epsilon}J_{\epsilon}\ast u_{j\epsilon}dx\\[1mm]
				=&:I_1+I_2,\quad i\neq j\in\{1,2\}.
			\end{split}
		\end{equation}
		Applying H\"{o}lder's inequality with  $r_i>1$,  $r'_i=r_i/(r_i-1)$, $i=1,2$, it yields
		\begin{equation}\label{right side of uk and wl1}
			\begin{split}
				I_1=&\sum\limits^2_{i=1}(l_i-1)\int_{\mathbb{R}^d}u^{l_i}_{i\epsilon}J_{\epsilon}\ast u_{j\epsilon}dx\leq	 \sum\limits^2_{i=1}(l_i-1)\|u_{i\epsilon}\|^{l_i}_{l_ir_i}\|J_{\epsilon}\ast u_{j\epsilon}\|_{r'_i}\\
				\leq&\sum\limits^2_{i=1}(l_i-1)\|u_{i\epsilon}\|^{l_i}_{l_ir_i}\|u_{j\epsilon}\|_{r'_i},\quad i\neq j\in\{1,2\}.
			\end{split}
		\end{equation}
		
		Now we would like to use  the left-hand side of \eqref{inequality for uk and wl} to dominate the terms at the right-hand side of \eqref{right side of uk and wl1}. For this purpose, the values of $r_i$ should be chosen carefully. We claim that there exist $\bar{l}_i>1$, $r_i>1$, such that for any $l_i>\bar{l}_i$, $i=1,2$, and $j\neq i\in\{1,2\}$, the following inequalities are established:
		\be{\label{choice of k and l 111}}
		l_i>\max\left\{k_i+1,\frac{d}{2}-m_i\right\},
		\ee
		
		\be{\label{choice of k and l 222}}
		\begin{split}
			\frac{1}{r_i}<&1-\frac{d-2}{(l_j+m_j-1)d},
		\end{split}
		\ee
		
		\be \label{choice of k and l 333}
		\frac{1}{r_i}>\max\left\{1-\frac{1}{k_j},
		\frac{d-2}{d}\cdot\frac{l_i}{l_i+m_i-1}\right\},
		\ee
		
		\be{\label{choice of k and l 444}}
		\begin{split}	
			\frac{\frac{l_i}{k_i}-\frac{1}{r_i}}{1-\frac{d}{2}+\frac{(l_i+m_i-1)d}{2k_i}}+	 \frac{\frac{1}{k_j}-1+\frac{1}{r_i}}{1-\frac{d}{2}+\frac{(l_j+m_j-1)d}{2k_j}}<\frac{2}{d}.
		\end{split}
		\ee
		
		We give a short explanation for above claim. Choose $r_i>1$ satisfying
		$r_i<d/(d-2)$, $i=1,2$,	and set
		\begin{equation}{\label{relationship betweeen k and l}}
			\begin{split}	
				l_j:=\frac{(l_i+m_i-1)k_j}{k_i}-m_j+1,\quad i\neq j\in\{1,2\}.
			\end{split}
		\end{equation}
		Then as a result of  \eqref{relationship betweeen k and l}, the denominators at the left-hand sides of	\eqref{choice of k and l 444} are the same, which actually implies that 	\eqref{choice of k and l 444} comes true by the following fact 
		\begin{equation*}{\label{relationship betweeen p and q1}}
			\begin{split}
				\frac{1}{k_j}<\frac{2}{d}+\frac{m_i-1}{k_i},\quad i\neq j\in\{1,2\}.
			\end{split}
		\end{equation*}
		Moreover, the assertions of  (\ref{choice of k and l 111})-\eqref{choice of k and l 222} easily hold out if one takes  $l_i\geq \bar{l}_i$ sufficiently large with some $\bar{l}_i>1$, $i=1,2$.
		
		It should be pointed out that if $r_1$ satisfies both (\ref{choice of k and l 222}) and (\ref{choice of k and l 333}), then it is necessary to have
		\begin{equation*}\label{inequality for kl1}
			1-\frac{d-2}{(l_j+m_j-1)d}>1-\frac{1}{k_j},\quad i\neq j\in\{1,2\},
		\end{equation*}
		and
		\begin{equation*}\label{inequality for kl2}
			1-\frac{d-2}{(l_j+m_j-1)d}>\frac{d-2}{d}\cdot\frac{l_i}{l_i+m_i-1},\quad i\neq j\in\{1,2\}.
		\end{equation*}		
		However,  the above two inequalities are trivial from
		\begin{equation*}
			\begin{split}
				1-\frac{d-2}{(l_j+m_j-1)d}>\frac{d-2}{d}>1-\frac{1}{k_j},\quad j\in\{1,2\}.
			\end{split}
		\end{equation*}		
		Therefore, the choice for $r_i$, $i=1,2$, is available, and our claim has been proved.
		
		By the choices of $l_i$ and $r_i$, $i=1,2$, in \eqref{choice of k and l 111}-\eqref{choice of k and l 444}, we would like to estimate terms at the right-hand side of \eqref{right side of uk and wl1} now. The choices of (\ref{choice of k and l 111})-(\ref{choice of k and l 333}) ensure
		\begin{equation*}
			\begin{split}
				k_i<l_ir_i<\frac{(l_i+m_i-1)d}{d-2},\quad i\in\{1,2\},
			\end{split}
		\end{equation*}
		and
		\begin{equation*}
			\begin{split}
				k_j<r'_i<\frac{(l_j+m_j-1)d}{d-2},\quad i\neq j\in\{1,2\},
			\end{split}
		\end{equation*}
		which together with the Gagliardo-Nirenberg inequality (\cite{BianLiu2013,L2023-CMS}), the uniform bound assumptions on $\|u_{i\epsilon}\|_{k_i}$, $i=1,2$, and  \eqref{choice of k and l 444} implies that there exist constants $c_1>0$ and $c_2>0$ such that for any $\eta>0$,
		\begin{equation}\label{inequality for I1}
			\begin{split}
				I_1\leq &\sum\limits^2_{i=1}(l_i-1)\big\|u_{i\epsilon}\big\|^{l_i}_{l_ir_i}\big\| u_{j\epsilon}\big\|_{r'_i} \\	 \leq&c_1\sum\limits^2_{i=1}\big\|u_{i\epsilon}\big\|^{l_i(1-\theta_i)}_{k_i}\left\|\nabla u^{\frac{l_i+m_i-1}{2}}_{i\epsilon}\right\|^{l_i\frac{2\theta_i}{l_i+m_i-1}}_{2}\cdot \big\|u_{j\epsilon}\big\|^{1-\theta_j}_{k_j}\left\|\nabla u^{\frac{l_j+m_j-1}{2}}_{j\epsilon}\right\|^{\frac{2\theta_j}{l_j+m_j-1}}_{2}\\[1mm]
				\leq&c_1\sum\limits^2_{i=1}C^{l_i(1-\theta_i)+1-\theta_j}_0\left\|\nabla u^{\frac{l_i+m_i-1}{2}}_{i\epsilon}\right\|^{l_i\frac{2\theta_i}{l_i+m_i-1}}_{2}\left\|\nabla u^{\frac{l_j+m_j-1}{2}}_{j\epsilon}\right\|^{\frac{2\theta_j}{l_j+m_j-1}}_{2}\\[1mm]
				\leq&c_1\sum\limits^2_{i=1}C^{l_i(1-\theta_i)+1-\theta_j}_0\left\|\nabla u^{\frac{l_i+m_i-1}{2}}_{i\epsilon}\right\|^{\frac{\frac{l_i}{k_i}-\frac{1}{r_i}}{\frac{1}{d}-\frac{1}{2}+\frac{l_i+m_i-1}{2m_i}}}_{2}\left\|\nabla u^{\frac{l_j+m_j-1}{2}}_{j\epsilon}\right\|^{ \frac{\frac{1}{k_j}-1+\frac{1}{r_i}}{\frac{1}{d}-\frac{1}{2}+\frac{l_j+m_j-1}{2k_j}}}_{2}\\
				\leq &\eta \sum\limits^2_{i=1}\left\|\nabla u^{\frac{l_i+m_i-1}{2}}_{i\epsilon}\right\|^{2}_{2}+c_2, \quad i\neq j\in\{1,2\},
			\end{split}
		\end{equation}
		where
		\begin{equation*}\label{definition of theta1}
			\begin{split}
				\theta_i=&\frac{l_i+m_i-1}{2}\frac{\frac{1}{k_i}-\frac{1}{l_ir_i}}{\frac{1}{d}-\frac{1}{2}+\frac{l_i+m_i-1}{2k_i}}\in(0,1),\quad i\in\{1,2\},\\[1mm]
				\theta_j=&\frac{l_j+m_j-1}{2}\frac{\frac{1}{k_j}-\frac{1}{r'_i}}{\frac{1}{d}-\frac{1}{2}+\frac{l_j+m_j-1}{2k_j}}\in(0,1),\quad j\neq i\in\{1,2\}.
			\end{split}
		\end{equation*}

		Following similar procedure above to control $I_2$ in \eqref{inequality for uk and wl}, there exists a constant $c_3>0$ such that
		\begin{equation}\label{inequality for I2}
			\begin{split}
				I_2&\leq\eta \sum\limits^2_{i=1}\left\|\nabla u^{\frac{l_i+m_i-1}{2}}_{i\epsilon}\right\|^{2}_{2}+c_3, \quad i\neq j\in\{1,2\}.
			\end{split}
		\end{equation}

		Then by means of \eqref{inequality for I1}, \eqref{inequality for I2} and  \eqref{inequality for uk and wl}, it indicates that for some small $\eta>0$,
		\begin{equation*}\label{inequality for uk and wl smaller than 0}
			\begin{split}
				\sum\limits^2_{i=1}\frac{d}{dt}&\int_{\mathbb{R}^d}u^{l_i}_{i\epsilon}dx+\sum\limits^2_{i=1}\frac{2m_il_i(l_i-1)}{(l_i+m_i-1)^2}\int_{\mathbb{R}^d}\left|\nabla u^{\frac{l_i+m_i-1}{2}}_{i\epsilon}\right|^2dx\leq c_2+c_3.
			\end{split}
		\end{equation*}
		Integrating the above inequality over $s\in(0,t)$, it leads to the following inequality
		\begin{equation*}\label{inequality with time for uk and wl}
			\begin{split}
				\sum\limits^2_{i=1}\int_{\mathbb{R}^d}u^{l_i}_{i\epsilon}dx+&\sum\limits^2_{i=1}\frac{2m_il_i(l_i-1)}{(l_i+m_i-1)^2}\int^t_0\int_{\mathbb{R}^d}\left|\nabla u^{\frac{l_i+m_i-1}{2}}_{i\epsilon}\right|^2dxds\\
				\leq&\sum\limits^2_{i=1} \int_{\mathbb{R}^d}(u^{0}_{i\epsilon})^ldx+(c_2+c_3)T,\quad t\in(0,T).
			\end{split}
		\end{equation*}
		Hence this lemma has been completed by 	the fact that $(u^0_\epsilon, w^0_\epsilon)\in (L^{1}({\mathbb{R}^d})\cap L^{\infty}(\mathbb{R}^d))^2$.
	\end{proof}
	The following lemma gives a uniform bound for the solutions of system \eqref{TSTC approxiamation}.
	\begin{lemma}\label{lemma infity bound for u and w}
		Let $T>0$, and let the same assumption hold in Lemma \ref{lemma Lp bound for u and Lq for wq}. Then there exists a positive constant $C>0$ independent of $\epsilon$ such that
		\begin{equation}\label{L infty for u and w}
			\begin{split}
				\sup_{0<t<T}&\sum\limits^2_{i=1}\|u_{i\epsilon}(t)\|_{r}\leq C,\,\,\,r\in[1,\infty],
			\end{split}
		\end{equation}
		\begin{equation}\label{L infty for nabla v and nabla w}
			\begin{split}
				\sup_{0<t<T}&\sum\limits^2_{j=1}\|\nabla v_{j\epsilon}(t)\|_{r}\leq C,\,\,\,r\in(d/(d-2),\infty].
			\end{split}
		\end{equation}
		Moreover, \eqref{system} possesses a global free energy solution.
	\end{lemma}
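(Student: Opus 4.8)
The plan is to upgrade the finite $L^{l_i}$ bounds of Lemma~\ref{lemma Lp bound for u and Lq for wq} to $\epsilon$-uniform $L^{\infty}$ bounds by a Moser--Alikakos iteration, to read off the stated estimates on $\nabla v_{j\epsilon}$ from convolution inequalities for the uniformly bounded kernel $\nabla\mathcal{K}_{\epsilon}$, and finally to extract a convergent subsequence whose limit is the desired global free energy solution. The starting data for the iteration are precisely the two outputs of the preceding lemma: the bound \eqref{Linfty estimate for u} in $L^{1}\cap L^{l_i}$ and the dissipation bound \eqref{L2 estimate for w4}, both uniform in $\epsilon$.

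For the $L^{\infty}$ bound I would fix $i$ and, for a dyadic sequence $p_k=2^k p_0\to\infty$ with $p_0$ large, test $(\ref{TSTC approxiamation})_i$ against $p_k u_{i\epsilon}^{p_k-1}$ exactly as in \eqref{inequality for uk and wl}, obtaining
\[
\frac{d}{dt}\int_{\mathbb{R}^d} u_{i\epsilon}^{p_k}\,dx
+\frac{4 m_i p_k(p_k-1)}{(p_k+m_i-1)^2}\int_{\mathbb{R}^d}\Big|\nabla u_{i\epsilon}^{\frac{p_k+m_i-1}{2}}\Big|^2 dx
\le (p_k-1)\int_{\mathbb{R}^d} u_{i\epsilon}^{p_k}\big(J_{\epsilon}\ast u_{j\epsilon}\big)\,dx + R_{\epsilon},
\]
where $R_\epsilon$ collects the lower-order term carrying the factor $\epsilon$. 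Bounding $J_\epsilon\ast u_{j\epsilon}$ in $L^{s}$ with $s>d/2$ via the $L^{l_j}$ bound (this is where $1/m_1+1/m_2<1+2/d$ and the large choice of $l_j$ enter), applying H\"older, and then interpolating the resulting high norm $\|u_{i\epsilon}\|_{p_k s'}$ between the Sobolev control of $\nabla u_{i\epsilon}^{(p_k+m_i-1)/2}$ and the lower norm $\|u_{i\epsilon}\|_{p_{k-1}}$ through Gagliardo--Nirenberg, one absorbs the gradient into the left-hand side by Young's inequality. This produces a recursion of the shape $M_k\le (C\,b^{\,k})^{1/p_k}\max\{M_{k-1}^{\chi},1\}$ for $M_k:=\sup_{\epsilon}\sup_{(0,T)}\|u_{i\epsilon}\|_{p_k}$ with fixed $b>1$ and admissible $\chi$; letting $k\to\infty$ the product $\prod_k (C b^{\,k})^{1/p_k}$ converges, which yields $\sup_{\epsilon}\|u_{i\epsilon}\|_{L^{\infty}(Q_T)}\le C$, and interpolation against the $L^1$ bound gives \eqref{L infty for u and w} for every $r\in[1,\infty]$. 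For \eqref{L infty for nabla v and nabla w} I would use $|\nabla\mathcal{K}_{\epsilon}(x)|\le c_d(d-2)|x|^{-(d-1)}$ uniformly in $\epsilon$ and split $\nabla v_{j\epsilon}(x)=\int_{|x-y|<1}+\int_{|x-y|>1}$: the near part is controlled by $\|u_{j\epsilon}\|_{\infty}$ (the kernel being locally integrable since $d-1<d$) and the far part by $\|u_{j\epsilon}\|_{1}$, giving $\nabla v_{j\epsilon}\in L^{\infty}$; combined with the weak Young inequality $\|\,|x|^{-(d-1)}\ast u_{j\epsilon}\|_{r}\le C\|u_{j\epsilon}\|_{p}$, $1/r=1/p-1/d$, and the uniform $L^{p}$ bounds just obtained, this covers all $r\in(d/(d-2),\infty]$.

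To produce a global free energy solution I would combine the $L^{\infty}$ bound with the energy estimate run at a small power to get $\epsilon$-uniform bounds on $\nabla u_{i\epsilon}^{m_i}$ and $\nabla u_{i\epsilon}^{(2m_i-1)/2}$ in $L^2(Q_T)$, whence, reading off $\partial_t u_{i\epsilon}$ from the equation, $\partial_t u_{i\epsilon}$ is bounded in $L^2(0,T;(H^{s})')$ for $s$ large. Aubin--Lions then gives strong $L^2(Q_T)$ convergence of a subsequence $u_{i\epsilon}\to u_i$, hence $L^p(Q_T)$ convergence for every finite $p$ by the $L^{\infty}$ bound, and the associated $\nabla v_{j\epsilon}\to\nabla v_j$ follows from the kernel bounds together with $\mathcal{K}_{\epsilon}\to\mathcal{K}$. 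Passing to the limit in the weak formulation of Definition~\ref{weak solution}(ii), and using weak lower semicontinuity for the convex diffusion and dissipation terms while the cross-attraction term is continuous under the strong convergence, yields the free energy inequality \eqref{free energy inequality}, so that $(u_1,u_2)$ is a global free energy solution.

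The hard part will be the uniform-in-$\epsilon$ control of the constants through the Moser iteration as $p_k\to\infty$: one must check that the Gagliardo--Nirenberg growth factor $(C b^{\,k})^{1/p_k}$ is summable in the exponent and that the recursion exponent $\chi$ stays admissible, all with constants independent of $\epsilon$. The cross-coupling is not a serious difficulty here, since the interaction enters only through the already-controlled norms of $u_{j\epsilon}$ (equivalently $\nabla v_{j\epsilon}$), so the iteration for species $i$ decouples from that for species $j$ at the level of these a priori estimates.
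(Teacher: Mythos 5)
Your proposal is correct and follows essentially the same path as the paper: the paper's ``bootstrap iterative technique'' (cited from \cite[Lemma 3.4]{L2023-CMS}) is precisely the Moser--Alikakos iteration you sketch, the bound \eqref{L infty for nabla v and nabla w} is likewise obtained from the fundamental-solution representation and the weak Young inequality, the compactness is Aubin--Lions, and the regularity $u_{i}^{(2m_i-1)/2}\in L^2(0,T;H^1(\mathbb{R}^d))$ needed for the free energy inequality is obtained exactly as you say, by running the $L^{l_i}$ energy estimate \eqref{inequality for uk and wl} at the power $l_i=m_i$. The only minor divergence is the time-compactness input: you bound $\partial_t u_{i\epsilon}$ in $L^2(0,T;(H^s)')$ by duality, while the paper tests the equation against $\partial_t(u_{i\epsilon}+\epsilon)^{m_i}$ to get the stronger bounds $\partial_t u^{m_i}_{i\epsilon}\in L^2(0,T;L^2(\mathbb{R}^d))$ and $\sup_{0<t<T}\|\nabla u^{m_i}_{i\epsilon}\|_2\leq C$, which deliver the convergence in $C(0,T;L^r_{loc}(\mathbb{R}^d))$ — and hence the continuity-in-time requirement of Definition \ref{weak solution}(i) — more directly than your dual-space bound.
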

	\begin{proof}
		It follows from Lemma \ref{lemma Lp bound for u and Lq for wq} that \eqref{Linfty estimate for u}-\eqref{L2 estimate for w4} hold out, and then applying  a bootstrap iterative technique in \cite[Lemma 3.4]{L2023-CMS}, the $L^{\infty}$-bound for the solution could be obtained. Consequently, combining $L^{\infty}$-bound with $L^1$-norm results in \eqref{L infty for u and w}. While \eqref{L infty for nabla v and nabla w} follows from fundamental solution for $v_j$, $j=1,2$, \eqref{L infty for u and w} and the weak Young inequality \cite[formula (9), pp. 107]{Lieb2001}.
		
		We multiply $(\ref{TSTC approxiamation})_i$ by $\partial_t(u_{i\epsilon}+\epsilon)^{m_i}$, and then integrate the identity over $x\in\mathbb{R}^d$ to arrive at
		\begin{equation*}\label{time ienquality for nabla u2}
			\begin{split}
				\frac{1}{2}\sum\limits^2_{i=1}&\frac{d}{dt}\int_{\mathbb{R}^d}\left|\nabla(u_{i\epsilon}+\epsilon)^{m_i}\right|^2dx+\sum\limits^2_{i=1}
				\frac{4m_i}{(m_i+1)^2}\int_{\mathbb{R}^d}\left|\partial_t(u_{i\epsilon}+\epsilon)^{\frac{m_i+1}{2}}\right|^2dx\\
				=&-\sum\limits^2_{i=1}\frac{2m_i}{m_i+1}\int_{\mathbb{R}^d}(u_{i\epsilon}+\epsilon)^{\frac{m_i-1}{2}}\nabla \cdot ((u_{i\epsilon}+\epsilon)\nabla v_{j\epsilon})\cdot\partial_t(u_{i\epsilon}+\epsilon)^{\frac{m_i+1}{2}}dx\\
				\leq&\sum\limits^2_{i=1} \frac{2m_i}{(m_i+1)^2}\int_{\mathbb{R}^d}\left|\partial_t(u_{i\epsilon}+\epsilon)^{\frac{m_i+1}{2}}\right|^2dx+c_1\sum\limits^2_{i=1}\int_{\mathbb{R}^d}\left|\nabla(u_{i\epsilon}+\epsilon)^{\frac{m_i+1}{2}}\right|^2dx\\
				&+c_1\sum\limits^2_{j=1}\int_{\mathbb{R}^d}\left|\Delta v_{j\epsilon}\right|^2dx,\quad i\neq j\in\{1,2\},
			\end{split}
		\end{equation*}
		where we have used Young's inequality, \eqref{L infty for u and w}-\eqref{L infty for nabla v and nabla w} and $c_1>0$.
		Integrating above inequality over $(0,t)$, $t\in(0,T)$, it leads to
		\begin{equation}\label{time ienquality for nabla w2}
			\begin{split}
				\frac{1}{2}&\sum\limits^2_{i=1}\big\|\nabla (u_{i\epsilon}+\epsilon)^{m_i}\big\|^2_2+	 \sum\limits^2_{i=1}\frac{2m_i}{(m_i+1)^2}\left\|\partial_t(u_{i\epsilon}+\epsilon)^{\frac{m_i+1}{2}}\right\|^2_{L^2(0,t;L^2(\mathbb{R}^d))}\\[1mm]
				\leq&\sum\limits^2_{i=1}\left\|\nabla (u^0_{i\epsilon}+\epsilon)^{m_i}\right\|^2_2+c_1\sum\limits^2_{i=1}\left\|\nabla(u_{i\epsilon}+\epsilon)^{\frac{m_i+1}{2}}\right\|^2_{L^2(0,t;L^2(\mathbb{R}^d))}\\
				&+c_1\sum\limits^2_{j=1}
				\left\|\Delta v_{j\epsilon}\right\|^2_{L^2(0,t;L^2(\mathbb{R}^d))},\quad i\neq j\in\{1,2\}.
			\end{split}
		\end{equation}		
		Multiplying $(\ref{TSTC approxiamation})_i$ by $u_{i\epsilon}$, it yields
		\begin{equation}\label{}
			\begin{split}
				\frac{1}{2}\frac{d}{dt}\int_{\mathbb{R}^d}u^2_{i\epsilon}+\frac{4m_i}{(m_i+1)^2}\int_{\mathbb{R}^d}|\nabla (u_{i\epsilon}+\epsilon)^{\frac{m_i+1}{2}}|^2dx=-\int_{\mathbb{R}^d}\left(\frac{1}{2}u_{i\epsilon}+\epsilon \right)u_{i\epsilon}\Delta v_{j\epsilon}dx,
			\end{split}
		\end{equation}	
		and then it follows from \eqref{L infty for u and w} that
		\begin{equation}\label{time ienquality for nabla w3}
			\begin{split}
				\left\|\nabla(u_{i\epsilon}+\epsilon)^{\frac{m_i+1}{2}}\right\|^2_{L^2(0,T;L^2(\mathbb{R}^d))}\leq c_2
			\end{split}
		\end{equation}	
		by integrating over time with some $c_2>0$. By means of \eqref{time ienquality for nabla w2}, \eqref{time ienquality for nabla w3} and the assumptions on the initial data, there is a constant $c_3>0$ such that
		\begin{equation*}\label{patial u L1}
			\begin{split}
				&\sum\limits^2_{i=1}\left\|\partial_tu^{m_i}_{i\epsilon}\right\|^2_{L^2(0,T;L^2(\mathbb{R}^d))}+\sum\limits^2_{i=1}\sup_{0<t<T}\left\|\nabla u^{m_i}_{i\epsilon}\right\|^2_2\\
				&\leq \sum\limits^2_{i=1} \left\|\partial_t(u_{i\epsilon}+\epsilon)^{m_i}\right\|^2_{L^2(0,T;L^2(\mathbb{R}^d))}+\sum\limits^2_{i=1}\sup_{0<t<T}\left\|\nabla (u_{i\epsilon}+\epsilon)^{m_i}\right\|^2_2\leq c_3.
			\end{split}
		\end{equation*}	
		
		Invoking the Aubin-Lions lemma and \eqref{L infty for u and w}, one can find subsequences of $\{u_{i\epsilon_n}\}$, $i=1,2$, such that
		\begin{equation*}\label{convergence almost everywhere for un and wn}
			u_{i\epsilon_{n}}\rightarrow u_i\,\text{strongly in}\,\,C(0,T;L^{r}_{loc}(\mathbb{R}^d)),\,\,\forall r\in[1,\infty),\quad i=1,2,\\[1mm]
		\end{equation*}
		\begin{equation*}\label{}
			u_{i\epsilon_{n}}\rightarrow u_i\,\,\text{a.e. in}\,\,\mathbb{R}^d\times(0,T),\quad i=1,2,\\[1mm]
		\end{equation*}
		\begin{equation*}
			\nabla (u_{i\epsilon_{n}}+\epsilon_{n})^{\frac{m_i+1}{2}}\rightharpoonup\,\,\nabla u^{\frac{m_i+1}{2}}_i\,\,\,\text{weakly}\,\,{in}\,\,L^2(0,T; L^{2}(\mathbb{R}^d)),\quad i=1,2.\\[1mm]
		\end{equation*}
		In view of \eqref{L infty for nabla v and nabla w}, we can also extract subsequences $\{v_{j\epsilon_n}\}$ such that
		\begin{equation*}
			\nabla v_{j\epsilon_{n}}(t)\rightarrow\nabla v_j(t)\,\,\,\text{weakly-}\ast\,\,{in}\,\,L^\infty(0,T; L^{r}(\mathbb{R}^d)),\,\,\,\forall r\in(d/(d-1),\infty],\quad j=1,2.\\[1mm]
		\end{equation*}
		The details of the proof can be found in \cite[Lemma 2.4]{CK2021-ANA}. Hence, a global weak solution of Cauchy problem \eqref{system} was obtained and satisfies Definition \ref{weak solution} $(i)-(ii)$. The free energy inequality \eqref{free energy inequality} has been proved in \cite[Lemma 2.5]{CK2021-ANA} if  $u^{(2m_i-1)/2}_i\in L^2(0,T;H^1(\mathbb{R}^d))$, $i=1,2$.  However, based on \eqref{L infty for u and w}, this regularity can be obtained by integrating \eqref{inequality for uk and wl} with $l_i=m_i$, $i=1,2$, over time.
	\end{proof}
	
	\section{Global existence}\label{global existence}
	
	In this section, we consider the global existence of solutions in two different cases: $(\romannumeral1)$\,\,$p\geq1$, $q\geq1$ and $r>1$ with $m_1,m_2\neq m^*$; $(\romannumeral2)$\,\,$m_1=m_2=m^*$. Chosen $\eta={\frac{1-\alpha}{m_1}/\left(\frac{1-\alpha}{m_1}+\frac{1-\beta}{m_2}\right)}\in(0,1)$ with $\alpha,\beta> 0$ satisfying \eqref{definition of alpha}-\eqref{definition of alpha and beta}, and denote $$
	\Lambda^*_{m_1,m_2,\theta}=A^{\frac{1-\alpha}{m_1}+\frac{1-\beta}{m_2}}\left(\theta,\frac{\frac{1-\alpha}{m_1}}{\frac{1-\alpha}{m_1}+\frac{1-\beta}{m_2}}\right)\Lambda^*_{m_1,m_2}=A^{\frac{1-\alpha}{m_1}+\frac{1-\beta}{m_2}}\left(\theta,\eta\right)\Lambda^*_{m_1,m_2},
	$$
	where $\Lambda^*_{m_1,m_2}$ is given by \eqref{definiton of Lambda}. In the case of $(\romannumeral1)$, it should be noted that $(1-\alpha)/m_1+(1-\beta)/m_2>1$. Define an auxiliary function
	$f(x)=x-\Lambda^*_{m_1,m_2,\theta}x^{\frac{1-\alpha}{m_1}+\frac{1-\beta}{m_2}}$ for $x>0$. It has been observed that the function $f(x)$ reaches its maximum value at the point $x_0$, which is given by \eqref{definition of x0}.
	
	In the next two lemmas, our objective is to prove the uniform bound for $\|u_i(\cdot,t)\|_{m_i}$, $i=1,2$, for all $t>0$. This will  imply that solution exists globally due to the criterion in Section \ref{Sufficient condition on well-posedness of solutions}.
	\begin{lemma}\label{lemma for uniform for u in Lm 1}	
		Given  $T>0$, $\theta\in(0,1)$. Let $p\geq1$, $q\geq1$ and $r>1$. Assume that initial data ${\textbf{u}^0}$ satisfies \eqref{total mass of solution}, \eqref{assumption on initial data} and
		\begin{equation}\label{assumption on F 1}
				\mathcal{F}[u^0_1,u^0_2]< \frac{c_d}{\kappa^{1+\gamma_1}_1\kappa^{1+\gamma_2}_2M^{\gamma_1}_1M^{\gamma_2}_2}f(x_0)
		\end{equation}	
		as well as
		\begin{equation}\label{assumption on ui in Lmi 1}
				\theta\kappa^{m_1}_1\|u^0_1\|^{m_1}_{m_1}+(1-\theta)\kappa^{m_2}_2\|u^0_2\|^{m_2}_{m_2}<\frac{x_0}{\kappa^{\gamma_1}_1\kappa^{\gamma_2}_2M^{\gamma_1}_1M^{\gamma_2}_2},
		\end{equation}	
		where
		\begin{equation}\label{definition of kappa}
			\begin{split}
				\kappa_1=&c_d^{\frac{m_2}{m_1+m_2-m_1m_2}}\left[(m_1-1)\theta\right]^{\frac{m_2-1}{m_1+m_2-m_1m_2}}\left[(m_2-1)(1-\theta)\right]^{\frac{1}{m_1+m_2-m_1m_2}},\\
				\kappa_2=&c_d^{\frac{m_1}{m_1+m_2-m_1m_2}}\left[(m_1-1)\theta\right]^{\frac{1}{m_1+m_2-m_1m_2}}\left[(m_2-1)(1-\theta)\right]^{\frac{m_1-1}{m_1+m_2-m_1m_2}},
			\end{split}
		\end{equation}
		and
		\begin{equation*}\label{}
				\gamma_1=\frac{\alpha}{	\frac{1-\alpha}{m_1}+\frac{1-\beta}{m_2}-1}> 0,\quad 	\gamma_2=\frac{\beta}{	 \frac{1-\alpha}{m_1}+\frac{1-\beta}{m_2}-1}>0.
		\end{equation*}	
		Then there exists a constant $c>0$ such that free energy solution ${\textbf{u}}$ of \eqref{system} with the initial data ${\textbf{u}^0}$ fulfills
		\begin{equation*}
				\|u_i(\cdot,t)\|_{m_i}\leq c\quad\text{for all}\,\,i=1,2,\,\,\text{and}\,\, t\in(0,T).
		\end{equation*}
	\end{lemma}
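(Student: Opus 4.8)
The plan is to run a continuity (barrier) argument on the quantity
\[
y(t):=\kappa_1^{\gamma_1}\kappa_2^{\gamma_2}M_1^{\gamma_1}M_2^{\gamma_2}\Big[\theta\kappa_1^{m_1}\|u_1(t)\|_{m_1}^{m_1}+(1-\theta)\kappa_2^{m_2}\|u_2(t)\|_{m_2}^{m_2}\Big],
\]
and to show that the hypotheses force $y(t)<x_0$ for all $t\in(0,T)$; since all the coefficients in the bracket are positive, this is exactly the claimed uniform bound on $\|u_i(\cdot,t)\|_{m_i}$. By the free energy inequality \eqref{free energy inequality} it suffices to bound $\mathcal{F}[u_1,u_2](t)$ from below in terms of $y(t)$. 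Throughout I write $\sigma:=\frac{1-\alpha}{m_1}+\frac{1-\beta}{m_2}>1$ (the strict inequality being equivalent to $\gamma_1,\gamma_2>0$) and $Y(t):=\theta\kappa_1^{m_1}\|u_1\|_{m_1}^{m_1}+(1-\theta)\kappa_2^{m_2}\|u_2\|_{m_2}^{m_2}$, so that $y=\kappa_1^{\gamma_1}\kappa_2^{\gamma_2}M_1^{\gamma_1}M_2^{\gamma_2}Y$.

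The first structural fact is that the constants in \eqref{definition of kappa} are chosen so that the diffusion part of $\mathcal{F}$ is \emph{exactly proportional} to $Y$. Indeed, a direct computation from \eqref{definition of kappa} gives $\kappa_2^{m_2}/\kappa_1^{m_1}=(m_1-1)\theta/[(m_2-1)(1-\theta)]$, whence the two coefficients $\tfrac{1}{m_1-1}$ and $\tfrac{1}{m_2-1}$ are in the same ratio as $\theta\kappa_1^{m_1}$ and $(1-\theta)\kappa_2^{m_2}$; using the identity $c_d(m_1-1)\theta\kappa_1^{m_1}=\kappa_1\kappa_2$ (again checked from \eqref{definition of kappa}) pins the proportionality constant and yields
\[
\frac{1}{m_1-1}\|u_1\|_{m_1}^{m_1}+\frac{1}{m_2-1}\|u_2\|_{m_2}^{m_2}=\frac{c_d}{\kappa_1\kappa_2}\,Y .
\]

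For the cross term, mass conservation \eqref{total mass of solution} together with the definition of $\Lambda^*_{m_1,m_2}$ in \eqref{definiton of Lambda} (a genuine supremum by Lemma \ref{lemma maximizer for H 2}) gives $\mathcal{H}[u_1,u_2]\le\Lambda^*_{m_1,m_2}M_1^{\alpha}M_2^{\beta}\|u_1\|_{m_1}^{1-\alpha}\|u_2\|_{m_2}^{1-\beta}$. Writing $\|u_1\|_{m_1}^{1-\alpha}\|u_2\|_{m_2}^{1-\beta}=\big((\kappa_1^{m_1}\|u_1\|_{m_1}^{m_1})^{\eta}(\kappa_2^{m_2}\|u_2\|_{m_2}^{m_2})^{1-\eta}\big)^{\sigma}\kappa_1^{-(1-\alpha)}\kappa_2^{-(1-\beta)}$ with $\eta=\frac{(1-\alpha)/m_1}{\sigma}$, and applying Lemma \ref{lemma for Young inequality} to the inner product, bounds this by $A^{\sigma}(\theta,\eta)\,\kappa_1^{-(1-\alpha)}\kappa_2^{-(1-\beta)}Y^{\sigma}$. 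Since $\Lambda^*_{m_1,m_2,\theta}=A^{\sigma}(\theta,\eta)\Lambda^*_{m_1,m_2}$, combining the two estimates and using $\gamma_i(\sigma-1)=\alpha,\beta$ to line up the powers of $\kappa_i$ and $M_i$ produces
\[
\mathcal{F}[u_1,u_2](t)\ge\frac{c_d}{\kappa_1^{1+\gamma_1}\kappa_2^{1+\gamma_2}M_1^{\gamma_1}M_2^{\gamma_2}}\Big(y(t)-\Lambda^*_{m_1,m_2,\theta}\,y(t)^{\sigma}\Big)=B\,f(y(t)),
\]
where $B:=c_d/(\kappa_1^{1+\gamma_1}\kappa_2^{1+\gamma_2}M_1^{\gamma_1}M_2^{\gamma_2})$. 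Chaining this with \eqref{free energy inequality} and the hypothesis \eqref{assumption on F 1} gives $B f(y(t))\le\mathcal{F}[u^0_1,u^0_2]<B f(x_0)$, hence $f(y(t))<f(x_0)$ for every $t\in(0,T)$.

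The conclusion then follows by a barrier argument: because $\sigma>1$, the function $f$ attains its global maximum $f(x_0)$ only at $x_0$, so $f(y(t))<f(x_0)$ forces $y(t)\neq x_0$ for all $t$; since $t\mapsto y(t)$ is continuous (interpolating $u_i\in C([0,T);L^1)$ against the $L^{\infty}$ bound of Definition \ref{weak solution}$(i)$) and $y(0)<x_0$ by hypothesis \eqref{assumption on ui in Lmi 1}, the image of $[0,T)$ cannot jump to $[x_0,\infty)$, so $y(t)<x_0$ throughout, bounding $Y(t)$ and therefore each $\|u_i(\cdot,t)\|_{m_i}$. The genuinely delicate point is precisely this last step: one must use that $x_0$ is a \emph{strict} maximizer of $f$ (which holds exactly because $m_1,m_2\neq m^*$, i.e. $\sigma>1$) and secure the continuity of $y$, so that the strict inequality $f(y(t))<f(x_0)$ is incompatible with $y$ crossing the level $x_0$. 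All the algebra yielding $\mathcal{F}\ge Bf(y)$ is routine, but it rests on the non-obvious fact that $\kappa_1,\kappa_2,\gamma_1,\gamma_2$ are engineered so that the diffusion term and the cross-attraction term fall exactly onto the linear and the $\sigma$-homogeneous parts of one and the same function $f$.
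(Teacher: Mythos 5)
Your proof is correct and follows essentially the same route as the paper's: your weighted quantity $y(t)$ is exactly the paper's change of variables $h_i=\kappa_i u_i$ in disguise, your lower bound $\mathcal{F}\ge B\,f(y)$ is the paper's inequality $\|h_1\|^{\gamma_1}_1\|h_2\|^{\gamma_2}_1\mathcal{G}[h_1,h_2]\ge f\bigl(\|h_1\|^{\gamma_1}_1\|h_2\|^{\gamma_2}_1B\bigr)$ obtained from Lemma \ref{lemma for Young inequality} together with the definition of $\Lambda^*_{m_1,m_2}$, and your final barrier step is the paper's appeal to the strict gap $\delta$ and the monotonicity of $f$ on $(0,x_0)$. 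The only difference is cosmetic: you spell out the continuity of $t\mapsto y(t)$ (via interpolation of $u_i\in C([0,T);L^1)$ against the $L^{\infty}$ bound) and the connectedness argument preventing $y$ from crossing the level $x_0$, which the paper leaves implicit.
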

	\begin{proof}
		Using the definition of \eqref{definition of kappa}, we can derive that
		\begin{equation*}\label{}
				\theta=\frac{\kappa_1\kappa_2}{c_d(m_1-1)\kappa^{m_1}_1},\quad 	1-\theta=\frac{\kappa_1\kappa_2}{c_d(m_2-1)\kappa^{m_2}_2}.
		\end{equation*}	
	By transforming the local free energy solution $u_i$ into $h_i=\kappa_i u_i$, we discover that $h_i\in\Gamma_i$ with $\|h_i\|_1=\kappa_i\|u^0_i\|_1=\kappa_iM_i$, $i=1 ,2$, and that
		\begin{equation*}\label{}
				\mathcal{F}[u_1,u_2]=\sum^{2}_{i=1}\frac{1}{m_i-1}\|u_i\|^{m_i}_{m_i}-c_d\mathcal{H}[u_1,u_2]=\frac{c_d}{\kappa_1\kappa_2}\mathcal{G}[h_1,h_2],
		\end{equation*}	
where the function $\mathcal{G}$ is defined as
		\begin{equation*}\label{definition of G}
				\mathcal{G}[h_1,h_2]:=	\theta\|h_1\|^{m_1}_{m_1}+(1-\theta)\|h_2\|^{m_2}_{m_2}-\mathcal{H}[h_1,h_2].
		\end{equation*}	
According to Lemma \ref{lemma for Young inequality}, the definition of $\Lambda^*_{m_1,m_2}$ implies that
		\begin{equation}\label{inequality for H 1}
			\begin{split}
				\left|\mathcal{H}[h_1,h_2]\right|\leq& \Lambda^*_{m_1,m_2} \|h_1\|^{\alpha}_1\|h_2\|^{\beta}_1\|h_1\|^{1-\alpha}_{m_1}\|h_2\|^{1-\beta}_{m_2}\\
				=&\Lambda^*_{m_1,m_2}\|h_1\|^{\alpha}_{1}\|h_2\|^{\beta}_{1}\left(\|h_1\|^{m_1}_{m_1}\right)^{\frac{1-\alpha}{m_1}}\left(\|h_2\|^{m_2}_{m_2}\right)^{\frac{1-\beta}{m_2}}\\
				\leq&\Lambda^*_{m_1,m_2}A^{\frac{1-\alpha}{m_1}+\frac{1-\beta}{m_2}}\left(\theta,\eta\right)\|h_1\|^{\alpha}_{1}\|h_2\|^{\beta}_{1}\\
    &\cdot\left(\theta\|h_1\|^{m_1}_{m_1}+(1-\theta)\|h_2\|^{m_2}_{m_2}\right)^{\frac{1-\alpha}{m_1}+\frac{1-\beta}{m_2}}\\
				=&\Lambda^*_{m_1,m_2,\theta}\|h_1\|^{\alpha}_{1}\|h_2\|^{\beta}_{1}B^{\frac{1-\alpha}{m_1}+\frac{1-\beta}{m_2}},
			\end{split}
		\end{equation}
with $B:=\theta\|h_1\|^{m_1}_{m_1}+(1-\theta)\|h_2\|^{m_2}_{m_2}$. This leads to
		\begin{equation*}\label{inequality for G 1}
			\begin{split}
				\|h_1\|^{\gamma_1}_1\|h_2\|^{\gamma_2}_1\mathcal{G}[h_1,h_2]\geq& \|h_1\|^{\gamma_1}_1\|h_2\|^{\gamma_2}_1B-\Lambda^*_{m_1,m_2,\theta}\|h_1\|^{\gamma_1+\alpha}_1\|h_2\|^{\gamma_2+\beta}_1B^{\frac{1-\alpha}{m_1}+\frac{1-\beta}{m_2}}\\
				=&\|h_1\|^{\gamma_1}_1\|h_2\|^{\gamma_2}_1B-\Lambda^*_{m_1,m_2,\theta}\left(\|h_1\|^{\gamma_1}_1\|h_2\|^{\gamma_2}_1B\right)^{\frac{1-\alpha}{m_1}+\frac{1-\beta}{m_2}}\\
				=&f(\|h_1\|^{\gamma_1}_1\|h_2\|^{\gamma_2}_1B).
			\end{split}
		\end{equation*}
For a small positive value of $\delta>0$, we obtain from \eqref{assumption on F 1} and the non-increasing property of $\mathcal{F}$ that
		\begin{equation}\label{inequality for f}
			\begin{split}
				(1-\delta)f(x_0)				\geq&\frac{\kappa_1\kappa_2\|h_1\|^{\gamma_1}_1\|h_2\|^{\gamma_2}_1\mathcal{F}[u^0_1,u^0_2]}{c_d}\\
				\geq&\frac{\kappa_1\kappa_2\|h_1\|^{\gamma_1}_1\|h_2\|^{\gamma_2}_1\mathcal{F}[u_1(\cdot,t),u_2(\cdot,t)]}{c_d}\\
				=&\|h_1\|^{\gamma_1}_1\|h_2\|^{\gamma_2}_1\mathcal{G}[h_1,h_2]\\
				\geq&f(\|h_1\|^{\gamma_1}_1\|h_2\|^{\gamma_2}_1B)\quad\text{for}\,\,t\in(0,T).
			\end{split}
		\end{equation}	
 Therefore, if the initial data satisfies \eqref{assumption on ui in Lmi 1}, then there exists $\delta'\in(0,1)$ such that
		\begin{equation*}\label{}
				\|h_1\|^{\gamma_1}_1\|h_2\|^{\gamma_2}_1B
				\leq (1-\delta')x_0,
		\end{equation*}	
		i.e.,
		\begin{equation*}\label{}
				\theta\kappa^{m_1}_1\|u_1\|^{m_1}_{m_1}+(1-\theta)\kappa^{m_2}_2\|u_2\|^{m_2}_{m_2}\leq\frac{(1-\delta')x_0}
				{\kappa^{\gamma_1}_1\kappa^{\gamma_2}_2M^{\gamma_1}_1M^{\gamma_2}_2}.
		\end{equation*}	
	This is possible because the function $f(x)$ is increasing for all  $x\in(0,x_0)$.
	\end{proof}
	
In the case of $(\romannumeral2)$,  the equation $\alpha+\beta=2/d$ is valid for $m_1=m_2=m^*$, and it implies that  $(1-\alpha)/{m_1}+(1-\beta)/{m_2}=1$. Due to this fact, the results in Lemma \ref{lemma for uniform for u in Lm 1} are invalid. Indeed, we possess
	\begin{lemma}\label{lemma for uniform for u in Lm 2}	
		Given $\theta\in(0,1)$, $\alpha,\beta> 0$ satisfying $\alpha+\beta=2/d$, $\kappa_i>0$ in \eqref{definition of kappa}, $i=1,2$, and $T>0$. Let  $m_1=m_2=m^*$.
		Assume that initial data ${\textbf{u}^0}$ satisfies
		\begin{equation}\label{assumption on F}
				M^{\alpha}_1M^{\beta}_2<M^{-1}_c,
		\end{equation}
where $M_c$ is given by \eqref{definition of Mc}. Then there exists a constant $c>0$ such that the free energy solution ${\textbf{u}}$ of \eqref{system} with ${\textbf{u}^0}$ fulfills
		\begin{equation*}
				\|u_i(\cdot,t)\|_{m_i}\leq c\quad\text{for all}\,\,i=1,2,\,\,\text{and}\,\, t\in(0,T).
		\end{equation*}
	\end{lemma}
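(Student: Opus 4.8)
The plan is to follow the rescaling already used in the proof of Lemma~\ref{lemma for uniform for u in Lm 1}, but to account for the degeneracy caused by the critical relation $\frac{1-\alpha}{m^*}+\frac{1-\beta}{m^*}=1$, which makes the auxiliary function $f$ of Lemma~\ref{lemma for uniform for u in Lm 1} collapse to a linear map and thereby prevents us from exploiting an interior maximum of $f$. First I would set $h_i=\kappa_i u_i$, so that $h_i\in\Gamma_i$ with $\|h_i\|_1=\kappa_i M_i$ and $\mathcal{F}[u_1,u_2]=\frac{c_d}{\kappa_1\kappa_2}\mathcal{G}[h_1,h_2]$, where $\mathcal{G}[h_1,h_2]=B-\mathcal{H}[h_1,h_2]$ and $B:=\theta\|h_1\|^{m^*}_{m^*}+(1-\theta)\|h_2\|^{m^*}_{m^*}$, exactly as before.

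Next I would run the chain of inequalities used in \eqref{inequality for H 1}, combining Lemma~\ref{lemma maximizer for H 2} with the elementary Lemma~\ref{lemma for Young inequality} for $\eta=\frac{1-\alpha}{m^*}\in(0,1)$. Because now $\frac{1-\alpha}{m^*}+\frac{1-\beta}{m^*}=1$, the power of $B$ produced is exactly $1$, giving $\mathcal{H}[h_1,h_2]\le \Lambda^*_{m^*,m^*,\theta}\,\|h_1\|^{\alpha}_1\|h_2\|^{\beta}_1\,B$ with $\Lambda^*_{m^*,m^*,\theta}=A(\theta,\eta)\Lambda^*_{m^*,m^*}$. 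Since $\|h_1\|^{\alpha}_1\|h_2\|^{\beta}_1=\kappa_1^{\alpha}\kappa_2^{\beta}M_1^{\alpha}M_2^{\beta}$, this yields the lower bound $\mathcal{G}[h_1,h_2]\ge B\bigl(1-\Lambda^*_{m^*,m^*,\theta}\kappa_1^{\alpha}\kappa_2^{\beta}M_1^{\alpha}M_2^{\beta}\bigr)$.

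The step I expect to be the crux is to show that the coefficient in front of $B$ is in fact independent of $\theta$ and is governed precisely by the sharp constant $M_c$. Using the explicit expressions \eqref{definition of kappa} together with $m_1+m_2-m_1m_2=2m^*/d$ and $\alpha+\beta=2/d$, a logarithmic computation should give $\kappa_1^{\alpha}\kappa_2^{\beta}=c_d(m^*-1)\theta^{\eta}(1-\theta)^{1-\eta}$; since $A(\theta,\eta)\theta^{\eta}(1-\theta)^{1-\eta}=\eta^{\eta}(1-\eta)^{1-\eta}=z(\eta)$ by \eqref{definition of A}, the identity \eqref{inequality for Mc} then collapses the factor to $\Lambda^*_{m^*,m^*,\theta}\kappa_1^{\alpha}\kappa_2^{\beta}=c_d\Lambda^*_{m^*,m^*}(m^*-1)z(\eta)=M_c$. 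In particular the apparent $\theta$-dependence cancels, and the hypothesis \eqref{assumption on F}, namely $M_1^{\alpha}M_2^{\beta}<M_c^{-1}$, is exactly the statement $\varepsilon_0:=1-M_c M_1^{\alpha}M_2^{\beta}>0$.

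It then remains only to close the estimate. From $\mathcal{G}[h_1,h_2]\ge \varepsilon_0 B$, the identity $\mathcal{F}[u_1,u_2]=\frac{c_d}{\kappa_1\kappa_2}\mathcal{G}[h_1,h_2]$, and the free energy inequality \eqref{free energy inequality} (so that $\mathcal{F}$ is non-increasing), I would obtain $\frac{c_d}{\kappa_1\kappa_2}\varepsilon_0 B\le \mathcal{F}[u_1(\cdot,t),u_2(\cdot,t)]\le \mathcal{F}[u^0_1,u^0_2]$ for $t\in(0,T)$. Because $B=\theta\kappa_1^{m^*}\|u_1\|^{m^*}_{m^*}+(1-\theta)\kappa_2^{m^*}\|u_2\|^{m^*}_{m^*}$ is a sum of two nonnegative quantities, each $\|u_i(\cdot,t)\|^{m^*}_{m^*}$ is bounded by a fixed constant multiple of $\frac{\kappa_1\kappa_2}{c_d\varepsilon_0}\mathcal{F}[u^0_1,u^0_2]$ uniformly in $t$, which gives the desired bound $\|u_i(\cdot,t)\|_{m^*}\le c$ for $i=1,2$ and completes the argument.
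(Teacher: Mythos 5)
Your proposal is correct and follows essentially the same route as the paper's proof: the rescaling $h_i=\kappa_i u_i$, the bound \eqref{inequality for H 1} (whose $B$-exponent equals $1$ at the critical point $\tfrac{1-\alpha}{m^*}+\tfrac{1-\beta}{m^*}=1$), the identity $\kappa_1^{\alpha}\kappa_2^{\beta}\Lambda^*_{m^*,m^*,\theta}=M_c$, and the monotonicity of $\mathcal{F}$ are exactly the steps in the paper. Your cancellation $A(\theta,\eta)\,\theta^{\eta}(1-\theta)^{1-\eta}=z(\eta)$ is merely a streamlined presentation of the paper's direct computation of $\kappa_1^{\alpha}\kappa_2^{\beta}\Lambda^*_{m^*,m^*,\theta}$, and your closing estimate coincides with the paper's conclusion.
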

	\begin{proof}
We can begin by noting that the equation
		\begin{equation*}\label{}
				\mathcal{F}[u_1,u_2]=\frac{c_d}{\kappa_1\kappa_2}[\theta\|h_1\|^{m^*}_{m^*}+(1-\theta)\|h_2\|^{m^*}_{m^*}-\mathcal{H}[h_1,h_2]],\quad
		\end{equation*}	
which can be obtained through the transformation $h_i=\kappa_i u_i\in\Gamma_i$, $i=1,2$. From \eqref{inequality for H 1}, it follows that
		\begin{equation*}\label{}
				\mathcal{H}[h_1,h_2]		\leq\Lambda^*_{m^*,m^*,\theta}\|h_1\|^{\alpha}_1\|h_2\|^{\beta}_1
				[\theta\|h_1\|^{m^*}_{m^*}+(1-\theta)\|h_2\|^{m^*}_{m^*}],
		\end{equation*}	
		which implies that
		\begin{equation*}\label{}
			\begin{split}
				\frac{c_d}{\kappa_1\kappa_2}(1-\Lambda^*_{m^*,m^*,\theta}\|h_1\|^{\alpha}_1\|h_2\|^{\beta}_1)[\theta\|h_1\|^{m_1}_{m_1}+(1-\theta)\|h_2\|^{m_2}_{m_2}]\leq \mathcal{F}[u_1,u_2]\leq\mathcal{F}[u^0_1,u^0_2].
			\end{split}
		\end{equation*}
Once one possesses
				\begin{equation*}\label{condition for M1 and M2}
			\begin{split}
		M^{\alpha}_1M^{\beta}_2<(\kappa^{\alpha}_1\kappa^{\beta}_2\Lambda^*_{m^*,m^*,\theta})^{-1},					\end{split}
	\end{equation*}
the uniform bound of $u_i$ in $L^{m^*}(\mathbb{R}^d)$ will be achieved, $i=1,2$. It is important to observe that $(\kappa^{\alpha}_1\kappa^{\beta}_2\Lambda^*_{m^*,m^*,\theta})^{-1}=M_c$. In fact,
		\begin{equation*}\label{}
			\begin{split}
				\Lambda^*_{m^*,m^*,\theta}=&A\left(\theta,\frac{1-\alpha}{m^*}\right)\Lambda^*_{m^*,m^*}\\
				=&\left(\frac{1-\alpha}{m^*}\right)^{\frac{1-\alpha}{m^*}}\left(\frac{1-\beta}{m^*}\right)^{\frac{1-\beta}{m^*}}\theta^{-\frac{1-\alpha}{m^*}}(1-\theta)^{-\frac{1-\beta}{m^*}}\Lambda^*_{m^*,m^*}.
			\end{split}
		\end{equation*}	
Then by the definition of $\kappa_i>0$, $i=1,2$, it is clear that
		\begin{equation*}\label{}
			\begin{split}
				\kappa^{\alpha}_1\kappa^{\beta}_2\Lambda^*_{m^*,m^*,\theta}=&c^{\frac{\alpha+\beta}{2-m^*}}_d[(m^*-1)\theta]^{\frac{(m^*-1)\alpha+\beta}{(2-m^*)m^*}}[(m^*-1)(1-\theta)]^{\frac{\alpha+(m^*-1)\beta}{(2-m^*)m^*}}\\
				&\cdot\left(\frac{1-\alpha}{m^*}\right)^{\frac{1-\alpha}{m^*}}\left(\frac{1-\beta}{m^*}\right)^{\frac{1-\beta}{m^*}}\theta^{-\frac{1-\alpha}{m^*}}(1-\theta)^{-\frac{1-\beta}{m^*}}\Lambda^*_{m^*,m^*}\\
				=&c_d\frac{m^*-1}{m^*}\left(1-\alpha\right)^{\frac{1-\alpha}{m^*}}\left(1-\beta\right)^{\frac{1-\beta}{m^*}}\Lambda^*_{m^*,m^*}\\
				=&M_c.
			\end{split}
		\end{equation*}	
	\end{proof}

We can generalize the sufficient condition \eqref{assumption on F} stated in Lemma \ref{lemma for uniform for u in Lm 2}, guaranteeing that the solution is bounded, to more general case. 
		\begin{lemma}{\label{lemma on global existence in subcritical case2}}
		Let $M_1,M_2>0$, and let $\theta_0=M^{m^*}_1/(M^{m^*}_1+M^{m^*}_2)$. Assume that initial data ${\textbf{u}^0}$ fulfills  (\ref{total mass of solution}) and (\ref{assumption on initial data}). If $\boldsymbol{M}$ is subcritical in the sense that
		\begin{equation}\label{assumption on lambda2}
			\begin{split}
				\Sigma(\boldsymbol{M})&=c_d(m^*-1)\Pi^*_{\theta_0}M_1M_2/(M^{m^*}_1+M^{m^*}_2)<1,
			\end{split}
		\end{equation}
		then $\|u_i(\cdot,t)\|_{m_i}\leq c$ with some $c>0$, $i=1,2$, for all $t\in(0,T)$.
	\end{lemma}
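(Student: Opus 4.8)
The plan is to run the same free-energy argument used in Lemma~\ref{lemma for uniform for u in Lm 2}, but to feed the solution itself directly into the sharp variational constant $\Pi^*_{\theta_0}$ defined in \eqref{definiton of Pi 2} rather than passing through the Young-type splitting of Lemma~\ref{lemma for Young inequality}, which only produced the larger constant $\Lambda^*_{m^*,m^*,\theta}$. Since $m_1=m_2=m^*$, the two diffusion contributions in the free energy \eqref{definition of free energy F} carry the common coefficient $1/(m^*-1)$, and the whole point is that the special weight $\theta_0=M_1^{m^*}/(M_1^{m^*}+M_2^{m^*})$ is exactly the one for which the cross-attraction term can be bounded by a multiple of $\|u_1\|_{m^*}^{m^*}+\|u_2\|_{m^*}^{m^*}$ with a single common coefficient, matching the diffusion terms.

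Concretely, first I would observe that for each $t\in(0,T)$ the solution satisfies $u_i(\cdot,t)\in\Gamma_i$ (it is non-negative and lies in $L^1\cap L^\infty\subset L^1\cap L^{m^*}$) and, by mass conservation \eqref{total mass of solution}, $\|u_i(\cdot,t)\|_1=M_i$. Hence the defining supremum in \eqref{definiton of Pi 2}, applied with $h_i=u_i(\cdot,t)$, yields
\begin{equation*}
	\mathcal{H}[u_1,u_2]\le \Pi^*_{\theta_0}\,M_1M_2\Big[\theta_0 M_1^{-m^*}\|u_1\|_{m^*}^{m^*}+(1-\theta_0)M_2^{-m^*}\|u_2\|_{m^*}^{m^*}\Big].
\end{equation*}
The choice of $\theta_0$ makes both weights equal to $1/(M_1^{m^*}+M_2^{m^*})$, so the bracket collapses to $(\|u_1\|_{m^*}^{m^*}+\|u_2\|_{m^*}^{m^*})/(M_1^{m^*}+M_2^{m^*})$; multiplying by $c_d$ and recalling the definition of $\Sigma(\boldsymbol{M})$ in \eqref{assumption on lambda2} turns this into $c_d\mathcal{H}[u_1,u_2]\le \frac{\Sigma(\boldsymbol{M})}{m^*-1}(\|u_1\|_{m^*}^{m^*}+\|u_2\|_{m^*}^{m^*})$.

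Inserting this into \eqref{definition of free energy F} gives $\mathcal{F}[u_1,u_2]\ge \frac{1-\Sigma(\boldsymbol{M})}{m^*-1}(\|u_1\|_{m^*}^{m^*}+\|u_2\|_{m^*}^{m^*})$, and the free energy inequality \eqref{free energy inequality} bounds the left side by the finite initial value $\mathcal{F}[u^0_1,u^0_2]$. Since $\Sigma(\boldsymbol{M})<1$ and $m^*>1$, the prefactor is strictly positive, and I would conclude $\|u_i(\cdot,t)\|_{m^*}^{m^*}\le (m^*-1)\mathcal{F}[u^0_1,u^0_2]/(1-\Sigma(\boldsymbol{M}))$ uniformly in $t$. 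There is no serious obstacle here: the argument reduces to a single substitution once Lemma~\ref{lemma maximizer for H 4} guarantees that $\Pi^*_{\theta_0}$ is a finite sharp constant. The only points requiring care are the admissibility of using $u_i(\cdot,t)$ as a competitor (non-negativity and $L^1\cap L^{m^*}$ membership, both built into the notion of a free energy solution), the finiteness of $\mathcal{F}[u^0_1,u^0_2]$ (immediate from \eqref{assumption on initial data} together with the Hardy--Littlewood--Sobolev inequality \eqref{HLS inequality}), and the strict positivity of $1-\Sigma(\boldsymbol{M})$, which is precisely the subcriticality hypothesis \eqref{assumption on lambda2}.
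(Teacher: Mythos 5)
Your proof is correct and takes essentially the same route as the paper's: both insert the solution into the sharp constant $\Pi^*_{\theta_0}$ from \eqref{definiton of Pi 2} (the paper uses the normalized pair $(u_1/M_1,u_2/M_2)$, which is equivalent since the functional $\mathcal{V}$ is invariant under independent scaling of each argument), obtain the coercivity bound $\mathcal{F}[u_1,u_2]\geq \frac{1-\Sigma(\boldsymbol{M})}{m^*-1}\left(\|u_1\|^{m^*}_{m^*}+\|u_2\|^{m^*}_{m^*}\right)$, and conclude by the monotonicity of the free energy. Your explicit observation that the choice of $\theta_0$ equalizes the two weights to $1/(M^{m^*}_1+M^{m^*}_2)$ is exactly the computation the paper absorbs into its normalization, so the two arguments coincide.
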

	\begin{proof}	
The assumption \eqref{assumption on lambda2} implies the existence of a positive value $\eta>0$ such that
		\begin{equation*}\label{}
			\begin{split}
				\eta\in\left(0,\frac{1}{m^*-1}-c_d\Pi^*_{\theta_0}\frac{M_1M_2}{M^{m^*}_1+M^{m^*}_2}\right).
			\end{split}
		\end{equation*}	
Given that $(u_1/M_1,u_2/M_2)\in\Gamma_1\times\Gamma_2$, we can apply the definition of $\Pi^*_{\theta_0}$ to obtain the following inequality
		\begin{equation*}\label{}
			\begin{split}
				\mathcal{H}[u_1,u_2]\leq& \Pi^*_{\theta_0}\frac{M_1M_2}{M^{m^*}_1+M^{m^*}_2}(\|u_1\|^{m^*}_{m^*}+\|u_2\|^{m^*}_{m^*}).
			\end{split}
		\end{equation*}	
Inserting it into $\mathcal{F}$ yields
		\begin{equation*}\label{lower bound for F in criitcal case}
			\begin{split}
				\mathcal{F}[u_1,u_2]\geq& \left(\frac{1}{m^*-1}-c_d\Pi^*_{\theta_0}\frac{M_1M_2}{M^{m^*}_1+M^{m^*}_2}\right)(\|u_1\|^{m^*}_{m^*}+\|u_2\|^{m^*}_{m^*})\\
				\geq&\eta(\|u_1\|^{m^*}_{m^*}+\|u_2\|^{m^*}_{m^*}).
			\end{split}
		\end{equation*}	
		Therefore, the $L^{m^*}$-norm of solution is uniformly bounded. 
	\end{proof}
\begin{remark}\label{remark for pi and lambda}
We will now prove that \eqref{assumption on F} leads to \eqref{assumption on lambda2}. Applying Lemma \ref{lemma for Young inequality}, for every function $h_i\in L^1(\mathbb{R}^d)\cap L^{m^*}(\mathbb{R}^d)$, $i=1,2$, we observe that
		\begin{equation*}\label{*}
			\begin{split}
	\|h_1\|_1&\|h_2\|_1[\theta_0\|h_1\|^{-m^*}_1\|h_1\|^{m^*}_{m^*}+(1-\theta_0)\|h_2\|^{-m^*}_1\|h_2\|^{m^*}_{m^*}]\\
 \geq& \frac{1}{A(\theta_0,(1-\alpha)/m^*)}\|h_1\|^{\alpha}_1\|h_2\|^{\beta}_1\|h_1\|^{1-\alpha}_{m^*}\|h_2\|^{1-\beta}_{m^*}.
			\end{split}
		\end{equation*}	
This gives
		\begin{equation*}\label{*}
			\begin{split}
			&\frac{\mathcal{H}[h_1,h_2]}{\|h_1\|\|h_2\|_1[\theta_0\|h_1\|^{-m^*}_1\|h_1\|^{m^*}_{m^*}+(1-\theta_0)\|h_2\|^{-m^*}_1\|h_2\|^{m^*}_{m^*}]}\\
   &\leq A(\theta_0,(1-\alpha)/m^*)\frac{\mathcal{H}[h_1,h_2]}{\|h_1\|^{\alpha}_1\|h_2\|^{\beta}_1\|h_1\|^{1-\alpha}_{m^*}\|h_2\|^{1-\beta}_{m^*}}\\
			&\leq\frac{M^{m^*}_1+M^{m^*}_2}{m^*M^{1-\alpha}_1M^{1-\beta}_2}\left(1-\alpha\right)^{\frac{1-\alpha}{m^*}}\left(1-\beta\right)^{\frac{1-\beta}{m^*}}\Lambda^*_{m^*,m^*}\\
			&=\frac{M^{m^*}_1+M^{m^*}_2}{M_1M_2}\frac{M^\alpha_1M^{\beta}_2M_c}{c_d(m^*-1)}.
		\end{split}
	\end{equation*}	
In particular, 
			\begin{equation*}\label{}
		\Pi^*_{\theta_0}\leq\frac{M^{m^*}_1+M^{m^*}_2}{M_1M_2}\frac{M^\alpha_1M^{\beta}_2M_c}{c_d(m^*-1)}.
\end{equation*}	
Hence if \eqref{assumption on F} is valid, the inequality \eqref{assumption on lambda2} follows.
\end{remark}
	
	\noindent\textbf{{\emph{Proof of Theorem \ref{theorem on global existence} ($\romannumeral1$)}}}. Let $\theta$ be a real number such that $0 < \theta < 1$. Lemma \ref{lemma for uniform for u in Lm 1} provides a uniform bound of $\|u_i\|_{m_i}$, $i=1,2$, under the assumptions stated in Theorem \ref{theorem on global existence} ($\romannumeral1$). Hence  the global existence of free energy solution has been proved based on the fact that 
	$m_j/[(1+2/d)m_j-1]\in(1,m_i)$, $i\neq j\in\{1,2\}$, by Lemma \ref{lemma infity bound for u and w}.
	
	\noindent\textbf{{\emph{Proof of Theorem \ref{theorem on critical on the intersection point} ($\romannumeral1$)}}}.\,\,By applying Lemma \ref{lemma for uniform for u in Lm 2}, we get a uniform bound of $\|u_i\|_{m_i}$ if $\Sigma(\boldsymbol{M})<1$, $i=1,2,$ and hence the free energy functional exists globally.

	\section{Finite time Blow-up}\label{blow-up}
This section will focus on extablishing the blow-up solution. The first lemma provides some information about the second moment for free energy solution.
	\begin{lemma}{\label{lemma for second moment of functional}}
		Assume that initial data ${\textbf{u}^0}$ fulfills  (\ref{total mass of solution}) and (\ref{assumption on initial data}). Suppose that ${\textbf{u}}(t)$ is a free energy solution of (\ref{system}) with initial data ${\textbf{u}^0}$  on $t\in[0,T)$ with some $T\in(0,\infty]$. Then
		\begin{equation}{\label{second moment}}
\sum^2_{i=1}\int_{\mathbb{R}^d}|x|^2u_i(x)dx\leq\sum^2_{i=1}\int_{\mathbb{R}^d}|x|^2u^0_i(x)dx+\int^t_0\mathcal{I}(s)ds, \quad t\in(0,T),
		\end{equation}
		where the function $\mathcal{I}$ is given by
		\begin{equation}{\label{defnition of I}}
				\mathcal{I}(t)=2(d-2)\mathcal{F}[u_1,u_2](t)+2d\sum\limits^2_{i=1}\frac{m_i-2+2/d}{m_i-1}\int_{\mathbb{R}^d}u^{m_i}_i(x,t)dx
		\end{equation}
for $t\in(0,T)$.
	\end{lemma}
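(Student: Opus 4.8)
The plan is to establish the virial (second-moment) identity formally first, then make it rigorous by carrying out the same computation on the regularized system \eqref{TSTC approxiamation} and passing to the limit. Formally, I would multiply the $i$-th equation in \eqref{system} by $|x|^2$ and integrate over $\mathbb{R}^d$. Two integrations by parts in the diffusion term give $\int_{\mathbb{R}^d}|x|^2\Delta u_i^{m_i}\,dx=\int_{\mathbb{R}^d}\Delta(|x|^2)u_i^{m_i}\,dx=2d\int_{\mathbb{R}^d}u_i^{m_i}\,dx$, while one integration by parts in the drift term yields $2\int_{\mathbb{R}^d}u_i\,x\cdot\nabla v_j\,dx$. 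Summing over $i=1,2$ with $j\neq i$ thus reduces the problem to the cross term $\sum_{i}\int_{\mathbb{R}^d}u_i\,x\cdot\nabla v_j\,dx$.

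The key algebraic step is the symmetrization of this cross term. Writing $\nabla v_j(x)=-c_d(d-2)\int_{\mathbb{R}^d}\frac{x-y}{|x-y|^d}u_j(y)\,dy$ and summing the $(i,j)=(1,2)$ and $(2,1)$ contributions, I would relabel $x\leftrightarrow y$ in the second one and use the elementary identity $x\cdot(x-y)+y\cdot(y-x)=|x-y|^2$ to collapse the kernel via $\frac{|x-y|^2}{|x-y|^d}=\frac{1}{|x-y|^{d-2}}$, producing exactly $-c_d(d-2)\mathcal{H}[u_1,u_2]$. Substituting the definition \eqref{definition of free energy F}, namely $c_d\mathcal{H}[u_1,u_2]=\sum_i\frac{1}{m_i-1}\|u_i\|_{m_i}^{m_i}-\mathcal{F}[u_1,u_2]$, and combining the coefficients of $\int_{\mathbb{R}^d} u_i^{m_i}\,dx$ through $2d-\frac{2(d-2)}{m_i-1}=\frac{2d(m_i-2+2/d)}{m_i-1}$ gives precisely $\frac{d}{dt}\sum_i\int_{\mathbb{R}^d}|x|^2u_i\,dx=\mathcal{I}(t)$, with $\mathcal{I}$ as in \eqref{defnition of I}.

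To make this rigorous I would repeat the computation on the approximate solutions $u_{i\epsilon}$, whose smoothness and the regularized kernel $\mathcal{K}_\epsilon$ justify every integration by parts; the shift by $\epsilon$ and the replacement of the singular kernel produce correction terms that vanish as $\epsilon\to0$ (using $\frac{|x-y|^2}{(|x-y|^2+\epsilon^2)^{d/2}}\uparrow|x-y|^{-(d-2)}$ together with the uniform bounds from Lemma \ref{lemma infity bound for u and w}). Integrating the resulting identity over $(0,t)$ gives $\sum_i\int_{\mathbb{R}^d}|x|^2u_{i\epsilon}(t)\,dx=\sum_i\int_{\mathbb{R}^d}|x|^2u^0_{i\epsilon}\,dx+\int_0^t\mathcal{I}_\epsilon(s)\,ds$. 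In the limit, the initial term converges by \eqref{assumption on initial data} and the time integral converges to $\int_0^t\mathcal{I}(s)\,ds$ using the convergences established in Lemma \ref{lemma infity bound for u and w}; crucially, on the left-hand side I only obtain $\sum_i\int_{\mathbb{R}^d}|x|^2u_i(t)\,dx\leq\liminf_{\epsilon\to0}\sum_i\int_{\mathbb{R}^d}|x|^2u_{i\epsilon}(t)\,dx$ by Fatou's lemma (since $|x|^2u_{i\epsilon}\geq0$ and $u_{i\epsilon}\to u_i$ a.e.), which is exactly why \eqref{second moment} is stated as an inequality rather than an equality.

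I expect the main obstacle to lie in the limit passage rather than in the algebra: one must control the second moment of the regularized solutions uniformly in $\epsilon$, so that the weighted integrals are finite and the integrations by parts carry no boundary contribution at infinity, and one must verify that the interaction correction terms coming from $\mathcal{K}_\epsilon$ and from the $+\epsilon$ shift are genuinely negligible in the limit. Establishing the finiteness and propagation of the second moment, together with justifying the Fatou step, is the technical heart of the argument.
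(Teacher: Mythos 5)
Your proposal is correct and takes essentially the same approach as the paper: the identical virial computation — multiplying by $|x|^2$, using $\int_{\mathbb{R}^d}|x|^2\Delta u_i^{m_i}\,dx=2d\int_{\mathbb{R}^d}u_i^{m_i}\,dx$, symmetrizing the cross term via $x\cdot(x-y)+y\cdot(y-x)=|x-y|^2$ to collapse it to $-2c_d(d-2)\mathcal{H}[u_1,u_2]$, and recombining through $\mathcal{F}$ with the same coefficient identity $2d-\frac{2(d-2)}{m_i-1}=\frac{2d(m_i-2+2/d)}{m_i-1}$ — followed by integration in time. The only divergence is in the rigorization, where the paper invokes a cut-off-function argument for weak solutions, citing \cite[Lemma 6.2]{S2006-DIE}, while you propose running the computation on the regularized system \eqref{TSTC approxiamation} and passing to the limit with Fatou's lemma; both are standard, and your route has the small virtue of explaining directly why \eqref{second moment} is an inequality.
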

	\begin{proof}
		Utilizing cut-off function, \eqref{second moment} could be obtained for weak solutions in a formal way, see \cite[Lemma 6.2]{S2006-DIE} for example. Here, we adhere to a classical argument. Set
		$$\mathcal{S}[u_1,u_2]=\sum^2_{i=1}\int_{\mathbb{R}^d}|x|^2u_i(x)dx.$$
		A direct calculation reveals that
		\begin{equation*}{\label{}}
			\begin{split}
				\frac{d}{dt}\mathcal{S}[u_1,u_2]=&\sum\limits^2_{i=1}\int_{\mathbb{R}^d}|x|^2(\Delta u^{m_i}_i(x,t)-\nabla \cdot (u_i(x,t)\nabla v_j(x,t)))dx\\
				=&2d\sum^2_{i=1}\int_{\mathbb{R}^d}u^{m_i}_i(x,t)dx+2\sum\limits^2_{i=1}\int_{\mathbb{R}^d}u_i(x,t)\nabla v_j(x,t)\cdot xdx\\
				=&2d\sum^2_{i=1}\int_{\mathbb{R}^d}u^{m_i}_i(x,t)dx-2c_d(d-2)\iint_{\mathbb{R}^d\times\mathbb{R}^d}\frac{u_1(x,t)u_2(y,t)}{|x-y|^{d-2}}dxdy\\
				=&2d\sum^2_{i=1}\int_{\mathbb{R}^d}u^{m_i}_i(x,t)dx-2c_d(d-2)\mathcal{H}[u_1,u_2]\\
				=&2(d-2)\mathcal{F}[u_1,u_2]+2d\sum\limits^2_{i=1}\frac{m_i-2+2/d}{m_i-1}\|u_i\|^{m_i}_{m_i},\quad j\neq i,\quad t>0,
			\end{split}
		\end{equation*}
		which subsequently leads to the desired inequality after integrating with respect to time.	
	\end{proof}

The subsequent objective is to show the non-positivity of $\mathcal{I}$ for Cases $(\romannumeral1)$-$(\romannumeral2)$ in Section \ref{global existence}, which helps us to get the finite-time blow-up for some free energy solution.
	
	\begin{lemma}{\label{lemma for blow up in supercritical case2}}
Let $\theta\in(0,1)$. Assume that $m_1,m_2\in(1,m^*)$ satisfy $p\geq1$, $q\geq1$, $r>1$, and		 \begin{equation}\label{assumption on m1 and m2 case 1}
				\max\{m_1,m_2\}\leq2-\frac{2}{d}-\frac{d-2}{d}\left(1-\frac{1}{\frac{1-\alpha}{m_1}+\frac{1-\beta}{m_2}}\right).
		\end{equation}
		Let initial data ${\textbf{u}^0}$ fulfill  (\ref{total mass of solution}) and (\ref{assumption on initial data}). Suppose that \eqref{assumption on F 1} is valid and 
		\begin{equation}\label{assumption on u0 in the reverse}
				\theta\kappa^{m_1}_1\|u^0_1\|^{m_1}_{m_1}+(1-\theta)\kappa^{m_2}_2\|u^0_2\|^{m_2}_{m_2}>\frac{x_0}{	 \kappa^{\gamma_1}_1\kappa^{\gamma_2}_2M^{\gamma_1}_1M^{\gamma_2}_2}.
		\end{equation}
		Then the free energy solution with the initial data ${\textbf{u}^0}$ satisfies $\mathcal{I}(t)< 0$ for $t\in(0,T)$.
	\end{lemma}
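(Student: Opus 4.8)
The plan is to rewrite $\mathcal{I}(t)$ in the rescaled variables $h_i=\kappa_i u_i$ from Lemma \ref{lemma for uniform for u in Lm 1} and to show that the hypotheses force it to be strictly negative. First I would record the identity
$$\frac{\kappa_1\kappa_2}{2c_d}\,\mathcal{I}(t)=(d-2)\mathcal{G}[h_1,h_2]+d\big[(m_1-m^*)X_1+(m_2-m^*)X_2\big],$$
where $X_1=\theta\|h_1\|^{m_1}_{m_1}$, $X_2=(1-\theta)\|h_2\|^{m_2}_{m_2}$ and $B:=X_1+X_2$. This follows from \eqref{defnition of I} upon substituting $\mathcal{F}=\frac{c_d}{\kappa_1\kappa_2}\mathcal{G}$ and the relations $\frac{1}{m_i-1}\|u_i\|^{m_i}_{m_i}=\frac{c_d}{\kappa_1\kappa_2}X_i$ (which come from $\theta=\frac{\kappa_1\kappa_2}{c_d(m_1-1)\kappa_1^{m_1}}$, $1-\theta=\frac{\kappa_1\kappa_2}{c_d(m_2-1)\kappa_2^{m_2}}$), together with the observation $\frac{m_i-2+2/d}{m_i-1}=\frac{m_i-m^*}{m_i-1}$. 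The value of this identity is that it pits the two genuinely negative coefficients $m_i-m^*<0$ against the single term $\mathcal{G}$, which the free energy controls from above.

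Next I would transfer the standing assumptions into this setting, writing $P:=\|h_1\|^{\gamma_1}_1\|h_2\|^{\gamma_2}_1=\kappa_1^{\gamma_1}\kappa_2^{\gamma_2}M_1^{\gamma_1}M_2^{\gamma_2}$, which is constant in $t$ by mass conservation. The hypothesis \eqref{assumption on F 1}, the monotonicity $\mathcal{F}[u_1,u_2](t)\leq\mathcal{F}[u^0_1,u^0_2]$, and the lower bound $P\,\mathcal{G}\geq f(PB)$ established in the proof of Lemma \ref{lemma for uniform for u in Lm 1} together yield $f(PB(t))\leq P\,\mathcal{G}(t)<f(x_0)$, and in particular $\mathcal{G}(t)<f(x_0)/P$ for all $t\in(0,T)$. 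Meanwhile, the reversed condition \eqref{assumption on u0 in the reverse} says exactly $PB(0)>x_0$. Since $f$ attains its strict maximum only at $x_0$, the strict inequality $f(PB(t))<f(x_0)$ forbids $PB(t)=x_0$; as $t\mapsto B(t)$ is continuous — which follows from $u_i\in C([0,T);L^1)\cap L^\infty$ by interpolating $\|u_i(t)-u_i(s)\|_{m_i}\leq\|u_i(t)-u_i(s)\|_1^{1/m_i}\|u_i(t)-u_i(s)\|_\infty^{1-1/m_i}$ — the intermediate value theorem propagates $PB(t)>x_0$ to every $t\in(0,T)$.

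With these two facts I would close the estimate. Using $m_i-m^*=-(m^*-m_i)$ and $m^*-m_i\geq c_{\min}:=m^*-\max\{m_1,m_2\}>0$, I bound $(m^*-m_1)X_1+(m^*-m_2)X_2\geq c_{\min}B>c_{\min}x_0/P$, while $\mathcal{G}<f(x_0)/P$. Substituting both into the identity (and using $d-2>0$) gives
$$\frac{\kappa_1\kappa_2}{2c_d}\,\mathcal{I}(t)<\frac{1}{P}\big[(d-2)f(x_0)-d\,c_{\min}x_0\big].$$
Finally I would evaluate $f(x_0)$ explicitly: since $f'(x_0)=0$ gives $\Lambda^*_{m_1,m_2,\theta}x_0^{s}=x_0/s$ with $s:=\frac{1-\alpha}{m_1}+\frac{1-\beta}{m_2}$, one has $f(x_0)=x_0(1-1/s)=x_0\frac{s-1}{s}$. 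The hypothesis \eqref{assumption on m1 and m2 case 1} is precisely the inequality $c_{\min}\geq\frac{d-2}{d}\cdot\frac{s-1}{s}$, which makes the bracket $\leq 0$ and hence $\mathcal{I}(t)<0$.

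The bulk of the work is the bookkeeping in the opening identity and recognizing that \eqref{assumption on m1 and m2 case 1} is exactly the inequality $c_{\min}\geq\frac{d-2}{d}\frac{s-1}{s}$ needed to annihilate the bracket; the only genuinely delicate point is the continuity/non-crossing argument guaranteeing $PB(t)>x_0$ for all time, for which the $L^1$–$L^\infty$ interpolation of $t\mapsto\|u_i(t)\|_{m_i}$ is essential.
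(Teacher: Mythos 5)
Your proposal is correct and takes essentially the same route as the paper's proof: the same rescaling $h_i=\kappa_i u_i$, the same bound $\mathcal{G}(t)<f(x_0)/P$ coming from the free energy decay and \eqref{assumption on F 1}, the same propagation of $PB(t)>x_0$ from \eqref{assumption on u0 in the reverse} using that $f$ is decreasing past its maximum, and the same final arithmetic identifying \eqref{assumption on m1 and m2 case 1} with $m^*-\max\{m_1,m_2\}\geq\frac{d-2}{d}\cdot\frac{s-1}{s}$ via $f(x_0)=x_0(s-1)/s$. If anything, you make explicit (via the $L^1$--$L^\infty$ interpolation and the intermediate value theorem) the continuity step the paper uses only implicitly, and your pointwise strict inequalities replace the paper's $(1+\delta')$ margin; both differences are cosmetic.
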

	\begin{proof}
Let us begin by noting that the function $f(x)=x-\Lambda^*_{m_1,m_2,\theta}x^{\frac{1-\alpha}{m_1}+\frac{1-\beta}{m_2}}$ achieves its maximum at the point $x=x_0$. By using the transformation $h_i=\kappa_i u_i$ in  Lemma \ref{lemma for uniform for u in Lm 1}, where $\|h_i\|_1=\kappa_iM_i>0$, $i=1,2$, and considering that $\mathcal{F}$ decreases with time, we may infer from  \eqref{assumption on F 1} that
		\begin{equation*}\label{}
			\begin{split}
				2(d-2)\mathcal{F}[u_1,u_2]\leq& 2(d-2)\mathcal{F}[u^0_1,u^0_2]\\
				<&\frac{2c_d(d-2)}{\kappa^{1+\gamma_1}_1\kappa^{1+\gamma_2}_2M^{\gamma_1}_1M^{\gamma_2}_2} f(x_0)\\
				=&\frac{2c_d(d-2)}{\kappa^{1+\gamma_1}_1\kappa^{1+\gamma_2}_2M^{\gamma_1}_1M^{\gamma_2}_2}x_0 \left(1-\Lambda^*_{m_1,m_2,\theta}x^{\frac{1-\alpha}{m_1}+\frac{1-\beta}{m_2}-1}_0\right).
			\end{split}
		\end{equation*}	
Since $(1-\alpha)/m_1+(1-\beta)/m_2>1$, it follows from \eqref{assumption on F 1} and \eqref{inequality for f} that there exists small positive constant $\delta>0$ such that
		\begin{equation*}\label{}
			\begin{split}
				(1+\delta)f(x_0)
				\geq&f(\|h_1\|^{\gamma_1}_1\|h_2\|^{\gamma_2}_1B),
			\end{split}
		\end{equation*}
where $B=\theta\|h_1\|^{m_1}_{m_1}+(1-\theta)\|h_2\|^{m_2}_{m_2}$. Combining \eqref{assumption on u0 in the reverse} with  the non-increasing property of $f(x)$ for $x>x_0$,  it follows that there exists a positive constant $\delta'>0$ such that
		\begin{equation}\label{inequality for theta h1 in Lm1 and 1-theta h2 in Lm2}
			\begin{split}
				\|h_1\|^{\gamma_1}_1\|h_2\|^{\gamma_2}_1\left(\theta\|h_1\|^{m_1}_{m_1}+(1-\theta)\|h_2\|^{m_2}_{m_2}\right)>(1+\delta')x_0.
			\end{split}
		\end{equation}
For the sake of simplicity, let us assume that $m_2\leq m_1<2-2/d$. By utilizing \eqref{inequality for theta h1 in Lm1 and 1-theta h2 in Lm2}, we have
		\begin{equation*}
			\begin{split}
				2d&\|h_1\|^{\gamma_1}_1\|h_2\|^{\gamma_2}_1\sum\limits^2_{i=1}\frac{m_i-2+2/d}{m_i-1}\|u_i\|^{m_i}_{m_i}\\
				\leq&2d(m_1-2+2/d)\|h_1\|^{\gamma_1}_1\|h_2\|^{\gamma_2}_1\left[\frac{1}{(m_1-1)\kappa^{m_1}_1}\|h_1\|^{m_1}_{m_1}+\frac{1}{(m_2-1)\kappa^{m_2}_2}\|h_2\|^{m_2}_{m_2}\right]\\
				=&\frac{2c_dd(m_1-2+2/d)}{\kappa_1\kappa_2}\|h_1\|^{\gamma_1}_1\|h_2\|^{\gamma_2}_1\left[\theta\|h_1\|^{m_1}_{m_1}+(1-\theta)\|h_2\|^{m_2}_{m_2}\right]\\
				<&\frac{2c_dd(m_1-2+2/d)}{\kappa_1\kappa_2}(1+\delta')x_0.
			\end{split}
		\end{equation*}
Then 			
\begin{equation*}
\begin{split}
				\|h_1\|^{\gamma_1}_1&\|h_2\|^{\gamma_2}_1\left(2(d-2)\mathcal{F}[u_1,u_2]+2d\sum\limits^2_{i=1}\frac{m_i-2+2/d}{m_i-1}\|u_i\|^{m_i}_{m_i}\right)\\
				\leq&\frac{2c_d(d-2)}{\kappa_1\kappa_2}x_0 \left(1-\Lambda^*_{m_1,m_2,\theta}x^{\frac{1-\alpha}{m_1}+\frac{1-\beta}{m_2}-1}_0\right)+\frac{2c_dd(m_1-2+2/d)}{\kappa_1\kappa_2}(1+\delta')x_0\\
				=& \frac{2c_d(d-2)}{\kappa_1\kappa_2}x_0 \left(\frac{1}{\frac{1-\alpha}{m_1}+\frac{1-\beta}{m_2}}-\Lambda^*_{m_1,m_2,\theta}x^{\frac{1-\alpha}{m_1}+\frac{1-\beta}{m_2}-1}_0\right)\\
				&+\frac{2c_d(d-2)}{\kappa_1\kappa_2}x_0 \left(1-\frac{1}{\frac{1-\alpha}{m_1}+\frac{1-\beta}{m_2}}\right)+\frac{2c_dd(m_1-2+2/d)}{\kappa_1\kappa_2}(1+\delta')x_0\\
				=&\frac{2c_d(d-2)}{\kappa_1\kappa_2}x_0 \left(1-\frac{1}{\frac{1-\alpha}{m_1}+\frac{1-\beta}{m_2}}\right)+\frac{2c_dd(m_1-2+2/d)}{\kappa_1\kappa_2}(1+\delta')x_0\\
				<&0,
			\end{split}
\end{equation*}	
where we have used \eqref{assumption on m1 and m2 case 1}. Finally we achieve $\mathcal{I}(t)<0$ for $t\in(0,T)$.
	\end{proof}
	
	\begin{lemma}{\label{lemma for blow up in supercritical case3}}
Let $m_1=m_2=m^*$. Given $M_1,M_2>0$ satisfying
		\begin{equation}\label{assumption on u0 in the reverse 2}
	\Sigma(\boldsymbol{M})=	c_d(m^*-1)M_1M_2\Pi^*_{\theta_0}/(M^{m^*}_1+M^{m^*}_2)>1,
\end{equation}
where $\theta_0=M^{m^*}_1/(M^{m^*}_1+M^{m^*}_2)\in(0,1)$. Then one can construct  initial data  ${\textbf{u}^0}$ satisfying \eqref{total mass of solution}, \eqref{assumption on initial data}  and \eqref{assumption on u0 in the reverse 2} such that the corresponding free energy solution satisfies $\mathcal{I}(t)=2(d-2)\mathcal{F}[u_1,u_2](t)<0$ for $t\in(0,T)$.
	\end{lemma}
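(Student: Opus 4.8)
The plan is to exploit the special structure of $\mathcal{I}$ at the intersection point. Since $m_1=m_2=m^*=2-2/d$, the coefficient $m_i-2+2/d$ appearing in \eqref{defnition of I} vanishes identically, so that $\mathcal{I}(t)=2(d-2)\mathcal{F}[u_1,u_2](t)$ for every free energy solution. Because $d\geq 3$, it therefore suffices to construct admissible initial data for which $\mathcal{F}[u^0_1,u^0_2]<0$: the free energy inequality \eqref{free energy inequality} then forces $\mathcal{F}[u_1,u_2](t)\leq \mathcal{F}[u^0_1,u^0_2]<0$ for all $t\in(0,T)$, whence $\mathcal{I}(t)<0$.

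To produce such data I would start from the maximizing pair $(\psi^*_1,\psi^*_2)\in\Gamma_1\times\Gamma_2$ supplied by Lemma \ref{lemma maximizer for H 4}, which attains $\Pi^*_{\theta_0}$ and has $\psi^*_i\neq 0$. Writing $A_i=\|\psi^*_i\|_1$ and $B_i=\|\psi^*_i\|^{m^*}_{m^*}$, set the idealised candidate $\widehat{u}^0_i:=(M_i/A_i)\psi^*_i$, which carries the prescribed mass $\|\widehat u^0_i\|_1=M_i$. Using that $(\psi^*_1,\psi^*_2)$ realises the supremum in \eqref{definition of Pi}, i.e.
$$\mathcal{H}[\psi^*_1,\psi^*_2]=\Pi^*_{\theta_0}A_1A_2\big[\theta_0 A^{-m^*}_1 B_1+(1-\theta_0)A^{-m^*}_2 B_2\big],$$
together with $\theta_0=M^{m^*}_1/(M^{m^*}_1+M^{m^*}_2)$ and $(M_i/A_i)^{m^*}B_i=\|\widehat u^0_i\|^{m^*}_{m^*}$, a direct computation collapses the cross-attraction term of $\mathcal{F}$ and yields
$$\mathcal{F}[\widehat u^0_1,\widehat u^0_2]=\frac{\|\widehat u^0_1\|^{m^*}_{m^*}+\|\widehat u^0_2\|^{m^*}_{m^*}}{m^*-1}\big(1-\Sigma(\boldsymbol{M})\big).$$
Since $\Sigma(\boldsymbol{M})>1$ by hypothesis \eqref{assumption on u0 in the reverse 2} and $m^*>1$, this is strictly negative. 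This identity is exactly what makes $\Sigma(\boldsymbol{M})=1$ the sharp threshold.

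The remaining, and only genuinely technical, step is that $\widehat u^0_i$ need not satisfy the admissibility requirements \eqref{assumption on initial data}: the maximizer is merely radially decreasing in $L^1\cap L^{m^*}$, so it may fail to lie in $L^\infty\cap H^1$ and, given the tail bound $\psi^*_i(\rho)\le C\rho^{-d/m^*}$, may even have infinite second moment. I would therefore regularize, approximating each $\psi^*_i$ by nonnegative, smooth, compactly supported functions $\psi^{(n)}_i\to\psi^*_i$ in $L^1(\mathbb{R}^d)\cap L^{m^*}(\mathbb{R}^d)$ (truncate the height and the support, then mollify), and rescaling the amplitude by $M_i/\|\psi^{(n)}_i\|_1$ so as to keep the mass equal to $M_i$. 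Such functions automatically belong to $L^1(\mathbb{R}^d;(1+|x|^2)dx)\cap L^\infty(\mathbb{R}^d)\cap H^1(\mathbb{R}^d)$ and are nonnegative, hence meet \eqref{assumption on initial data}.

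To transfer the strict inequality I would invoke continuity of $\mathcal{F}$ along the approximation: the diffusion terms converge by $L^{m^*}$-convergence, while the cross-attraction term $\mathcal{H}$ is continuous under these approximations via the Hardy-Littlewood-Sobolev inequality \eqref{HLS inequality} and interpolation of intermediate $L^{p}$-norms between $L^1$ and $L^{m^*}$. Thus $\mathcal{F}$ of the rescaled approximants converges to $\mathcal{F}[\widehat u^0_1,\widehat u^0_2]<0$, and the strictness of $\Sigma(\boldsymbol{M})>1$ guarantees that for $n$ large the admissible data $u^0_i:=(M_i/\|\psi^{(n)}_i\|_1)\psi^{(n)}_i$ still satisfy $\mathcal{F}[u^0_1,u^0_2]<0$, completing the argument. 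The main obstacle is therefore the regularity of the extremal profiles rather than the variational identity itself, the latter being forced by the scaling-critical character of $\mathcal{F}$ at $(m^*,m^*)$.
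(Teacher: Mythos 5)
Your proof is correct, and it rests on the same structural observation as the paper's: at $(m^*,m^*)$ the coefficients $m_i-2+2/d$ in \eqref{defnition of I} vanish, so everything reduces to producing admissible initial data with $\mathcal{F}[u^0_1,u^0_2]<0$ and then invoking the monotonicity of $\mathcal{F}$. The construction, however, differs in two respects. The paper never touches the exact maximizer: it fixes $\eta\in(0,1)$ with $1-\eta\Sigma(\boldsymbol{M})<0$, takes a near-maximizing pair $h_1,h_2$ with $\eta\Pi^*_{\theta_0}\le\mathcal{V}[h_1,h_2]\le\Pi^*_{\theta_0}$, and sets $u^0_i(x)=\lambda_i h_i(\mu x)$ with $\lambda_i=M_i\|h_i\|_1^{-1}\mu^d$, obtaining $\mathcal{F}[u^0_1,u^0_2]\le \frac{\mu^{d-2}}{m^*-1}\left[M^{m^*}_1\|h_1\|^{-m^*}_1\|h_1\|^{m^*}_{m^*}+M^{m^*}_2\|h_2\|^{-m^*}_1\|h_2\|^{m^*}_{m^*}\right](1-\eta\Sigma(\boldsymbol{M}))<0$ for any fixed $\mu>0$. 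You instead use the exact maximizer from Lemma \ref{lemma maximizer for H 4} to derive the identity $\mathcal{F}[\widehat u^0_1,\widehat u^0_2]=\frac{\|\widehat u^0_1\|^{m^*}_{m^*}+\|\widehat u^0_2\|^{m^*}_{m^*}}{m^*-1}\bigl(1-\Sigma(\boldsymbol{M})\bigr)$, which is more transparent about why $\Sigma(\boldsymbol{M})=1$ is the sharp threshold, at the price of requiring the existence result of Lemma \ref{lemma maximizer for H 4} and an extra approximation step. That regularization step is a genuine addition rather than pedantry: the paper's proof takes $h_i$ only in $L^1\cap L^{m^*}$ and is silent about \eqref{assumption on initial data} (second moment, $L^\infty$, $H^1$), so as written it has the very admissibility gap you flag; in the near-maximizer framework the gap is repaired for free by choosing $h_1,h_2$ smooth and compactly supported, which density together with continuity of $\mathcal{V}$ on $L^1\cap L^{m^*}$ permits --- exactly the continuity mechanism (Hardy-Littlewood-Sobolev with $p_1=p_2=2d/(d+2)\in(1,m^*)$ plus interpolation between $L^1$ and $L^{m^*}$) that you invoke. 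In short: your variational identity gives a sharper formulation and your regularization fills a detail the paper glosses over, while the paper's $\eta$-slack extracts the same margin from the strictness of $\Sigma(\boldsymbol{M})>1$ without ever needing a maximizer to exist.
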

	\begin{proof}
Let us first pick $\eta\in(0,1)$ such that 
\begin{equation}\label{choice of eta}
1-\eta\Sigma(\boldsymbol{M})<0.
\end{equation}
Then there exists non-negative function $h_i\in L^{1}(\mathbb{R}^d)\cap L^{m^*}(\mathbb{R}^d)$, $i=1,2$, such that 	
\begin{equation*}
			\eta \Pi^{*}_{\theta_0}\leq\frac{\mathcal{H}[h_1,h_2]}{\|h_1\|_1\|h_2\|_1[\theta_0\| h_1\|^{-m^*}_1\|h_1\|^{m^*}_{m^*}+(1-\theta_0)\|h_2\|^{-m^*}_1\|h_2\|^{m^*}_{m^*}]}\leq \Pi^{*}_{\theta_0}.
	\end{equation*}
For $\mu>0$, let $\lambda_i=M_i\|h_i\|^{-1}_1\mu^d$, $i=1,2$. Denote $u^{0}_i(x)=\lambda_ih_i(\mu x)$. Then $\|u^0_i\|_1=M_i$, $\|u^0_i\|_{m^*}=\lambda_i\mu^{-\frac{d}{m^*}}\|h_i\|_{m^*}$, $i=1,2$, and $\mathcal{H}[u^0_1,u^0_2]=\lambda_1\lambda_2\mu^{-d-2}\mathcal{H}[h_1,h_2]$. Fix any $\mu>0$, then 
	\begin{equation*}
		\begin{split}
			\mathcal{F}[u^0_1,u^0_2]
			=&\frac{1}{m^*-1}\|u^0_1\|^{m^*}_{m^*}+\frac{1}{m^*-1}\|u^0_2\|^{m^*}_{m^*}-c_d\mathcal{H}[u^0_1,u^0_2]\\
   \leq&\frac{1}{m^*-1}\lambda^{m^*}_1\mu^{-d}\|h_1\|^{m^*}_{m^*}+\frac{1}{m^*-1}\lambda^{m^*}_2\mu^{-d}\|h_2\|^{m^*}_{m^*}-c_d\lambda_1\lambda_2\mu^{-d-2}\mathcal{H}[h_1,h_2]\\
   =&\frac{1}{m^*-1}M^{m^*}_1\|h_1\|^{-m^*}_1\mu^{d-2}\|h_1\|^{m^*}_{m^*}+\frac{1}{m^*-1}M^{m^*}_2\|h_2\|^{-m^*}_1\mu^{d-2}\|h_2\|^{m^*}_{m^*}\\
   &-c_d\eta M_1M_2\Pi^*_{\theta_0}\mu^{d-2}[\theta_0\|h_1\|^{-m^*}_1\|h_1\|^{m^*}_{m^*}+(1-\theta_0)\|h_2\|^{-m^*}_1\|h_2\|^{m^*}_{m^*}]\\
   =&\frac{\mu^{d-2}}{m^*-1}\left[M^{m^*}_1\|h_1\|^{-m^*}_1\|h_1\|^{m^*}_{m^*}
    +M^{m^*}_2\|h_2\|^{-m^*}_1\|h_2\|^{m^*}_{m^*}\right](1-\eta\Sigma(\boldsymbol{M})).
		\end{split}
	\end{equation*}
Due to the definition of $\theta_0$ and the choice of $\eta$ in \eqref{choice of eta}, then $\mathcal{F}[u^0_1,u^0_2]<0$. Let $(u_1,u_2)$ be the free energy solution of \eqref{system} with initial data $(u^0_1,u^0_2)$. The pair of $(u_1,u_2)$ fulfills  \begin{center}$\mathcal{I}(t)=2(d-2)\mathcal{F}[u_1,u_2](t)\leq 2(d-2)\mathcal{F}[u^0_1,u^0_2]< 0$ for $t\in(0,T)$.\end{center}	
The proof has been finished.

		\end{proof}

	\noindent\textbf{{\emph{Proof of Theorems \ref{theorem on global existence} ($\romannumeral2$) and \ref{theorem on critical on the intersection point} ($\romannumeral2$)}}}.\,\,	We will  prove the claims by the method of contradiction. Based on Lemmas \ref{lemma for blow up in supercritical case2} and \ref{lemma for blow up in supercritical case3}, we can deduce that there exists a finite time $T_0>0$ such that the corresponding free energy solution satisfies
	\begin{equation*}\label{finite-time for second moment}
		\begin{split}
			\lim\limits_{t\rightarrow T_0}\mathcal{S}[u_1,u_2]=0,
		\end{split}
	\end{equation*}	
as long as the assumptions in Theorems \ref{theorem on global existence} ($\romannumeral2$) or \ref{theorem on critical on the intersection point} ($\romannumeral2$) hold true. Subsequently, according to Lemma \ref{lemma for second moment of functional}, the second moment will eventually become negative after some time, which contradicts the non-negativity of $u_i,i=1,2$. Hence blow-up occurs.

\section*{Acknowledgments} JAC was supported by the Advanced Grant Nonlocal-CPD (Nonlocal PDEs for Complex Particle Dynamics: Phase Transitions, Patterns and Synchronization) of the European Research Council Executive Agency (ERC) under the European Union’s Horizon 2020 research and innovation programme (grant agreement No. 883363).
JAC was also partially supported by the Engineering and Physical Sciences Research Council (EPSRC) under the grant EP/V051121/1. JAC was also partially supported by the “Maria de Maeztu” Excellence Unit IMAG, reference CEX2020-001105-M, funded by the Spanish ministry of Science and Innovation MCIN/AEI/10.13039/501100011033/.
K. Lin is supported by the Natural Science Foundation of Sichuan Province (No.2022NSFSC1837).

\end{document}
\